\newtheorem{theorem}{Theorem}[section]
\newtheorem*{theorem*}{Theorem}
\newtheorem{lemma}[theorem]{Lemma}
\newtheorem{proposition}[theorem]{Proposition}
\theoremstyle{definition}
\newtheorem{definition}[theorem]{Definition}
\newtheorem{conjecture}[theorem]{Conjecture}
\theoremstyle{remark}
\newtheorem{remark}[theorem]{Remark}
\numberwithin{equation}{section}
\begin{document}
	
\title{Onsager's Conjecture for Subgrid Scale $\alpha$-Models of Turbulence}
	
\author[]{Daniel W. Boutros\footnote{Department of Applied Mathematics and Theoretical Physics, University of Cambridge, Cambridge CB3 0WA UK. Email: \textsf{dwb42@cam.ac.uk}} \space and Edriss S. Titi\footnote{Department of Mathematics, Texas A\&M University, College Station, TX 77843-3368, USA; Department of Applied Mathematics and Theoretical Physics, University of Cambridge, Cambridge CB3 0WA UK; also Department of Computer Science and Applied Mathematics, Weizmann Institute of Science, Rehovot 76100, Israel. Emails: \textsf{titi@math.tamu.edu} \; \textsf{Edriss.Titi@maths.cam.ac.uk} \; \textsf{edriss.titi@weizmann.ac.il}}}

\date{September 19, 2022}
	
\maketitle

{\centering \textit{In memory of Charles R. Doering} \par}
	
\bigskip

\begin{abstract}
The first half of Onsager's conjecture states that the Euler equations of an ideal incompressible fluid conserve energy if $u (\cdot ,t) \in C^{0, \theta} (\mathbb{T}^3)$ with $\theta > \frac{1}{3}$. In this paper, we prove an analogue of Onsager's conjecture for several subgrid scale $\alpha$-models of turbulence. In particular we find the required H\"older regularity of the solutions that ensures the conservation of energy-like quantities (either the $H^1 (\mathbb{T}^3)$ or $L^2 (\mathbb{T}^3)$ norms) for these models.

We establish such results for the Leray-$\alpha$ model, the Euler-$\alpha$ equations (also known as the inviscid Camassa-Holm equations or Lagrangian averaged Euler equations), the modified Leray-$\alpha$ model, the Clark-$\alpha$ model and finally the magnetohydrodynamic Leray-$\alpha$ model. In a sense, all these models are inviscid regularisations of the Euler equations; and formally converge to the Euler equations as the regularisation length scale $\alpha \rightarrow 0^+$.

Different H\"older exponents, smaller than $1/3$, are found for the regularity of solutions of these models (they are also formulated in terms of Besov and Sobolev spaces) that guarantee the conservation of the corresponding energy-like quantity. This is expected due to the smoother nonlinearity compared to the Euler equations. These results form a contrast to the universality of the $1/3$ Onsager exponent found for general systems of conservation laws by (Gwiazda et al., 2018; Bardos et al., 2019).
\end{abstract}

\noindent \textbf{Keywords:} Onsager's conjecture, energy conservation, subgrid scale $\alpha$-models of turbulence

\vspace{0.1cm} \noindent \textbf{Mathematics Subject Classification:} 76B99 (primary), 35Q35, 35D30, 76F99 (secondary)

\newpage

\section{Introduction}

It is well-known that the $L^2$ spatial norm (the kinetic energy) of smooth solutions of the Euler equations of an ideal incompressible fluid is conserved. On the other hand, turbulent flows, for high Reynolds numbers, in particular for an infinite Reynolds number, are not expected to be smooth. Therefore, they might not conserve energy. This observation was made by Lars Onsager \cite{onsager} which led to the following conjecture (which is now a theorem).
\begin{conjecture}[Onsager's conjecture]
Let $v \in L^\infty ((0,T); L^2 (\mathbb{T}^3) )$ be a weak solution of the Euler equations.
If $v \in L^3 ((0,T); C^{0, \theta} (\mathbb{T}^3) )$ with $\theta > \frac{1}{3}$, then the spatial $L^2 (\mathbb{T}^3)$ norm of the solution stays constant.

Furthermore, for $\theta < 1/3$ there exist weak solutions of the Euler equation that do not conserve energy.
\end{conjecture}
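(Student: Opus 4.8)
The statement combines two results of very different character, so I would treat them separately. For the \emph{positive} direction --- conservation of the $L^2$ norm when $\theta > 1/3$ --- the plan is the mollification-and-commutator argument of Constantin--E--Titi. First I would regularise in space only: let $(\eta_\epsilon)_{\epsilon > 0}$ be a standard smooth mollifying family on $\mathbb{T}^3$ and set $v_\epsilon := v * \eta_\epsilon$, $p_\epsilon := p * \eta_\epsilon$. Mollifying the weak formulation yields, pointwise in $x$,
\begin{equation*}
\partial_t v_\epsilon + \Div (v \otimes v)_\epsilon + \Grad p_\epsilon = 0 , \qquad \Div v_\epsilon = 0 ,
\end{equation*}
and since $v \in L^\infty((0,T);L^2(\mathbb{T}^3))$ one reads off from this identity that $v_\epsilon$ and $\partial_t v_\epsilon$ both lie in $L^\infty((0,T); C^\infty(\mathbb{T}^3))$; in particular $t \mapsto \| v_\epsilon(t) \|_{L^2(\mathbb{T}^3)}^2$ is Lipschitz and differentiable a.e. Pairing the mollified equation with $v_\epsilon$ and integrating over $\mathbb{T}^3$, the pressure term drops because $\Div v_\epsilon = 0$; introducing the commutator $r_\epsilon(v,v) := (v\otimes v)_\epsilon - v_\epsilon \otimes v_\epsilon$ and using $\Div v_\epsilon = 0$ once more to discard $\int_{\mathbb{T}^3} (v_\epsilon \otimes v_\epsilon) : \Grad v_\epsilon \dx$, one obtains the flux identity
\begin{equation*}
\frac{1}{2} \frac{{\rm d}}{{\rm d} t} \| v_\epsilon(t) \|_{L^2(\mathbb{T}^3)}^2 = \int_{\mathbb{T}^3} r_\epsilon(v,v) : \Grad v_\epsilon \dx .
\end{equation*}

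The core estimate is then the Constantin--E--Titi commutator lemma. Writing $\delta_y v(x) := v(x - y) - v(x)$, there is the algebraic identity
\begin{equation*}
r_\epsilon(v,v)(x) = \int_{\mathbb{T}^3} \eta_\epsilon(y) \, \delta_y v(x) \otimes \delta_y v(x) \, {\rm d} y \; - \; (v - v_\epsilon)(x) \otimes (v - v_\epsilon)(x) ,
\end{equation*}
which, together with $|\delta_y v(x)| \leq \| v(t) \|_{C^{0,\theta}(\mathbb{T}^3)} |y|^\theta$ on $\mathrm{supp}\, \eta_\epsilon$, gives $\| r_\epsilon(v,v)(t) \|_{L^\infty(\mathbb{T}^3)} \lesssim \epsilon^{2\theta} \| v(t) \|_{C^{0,\theta}(\mathbb{T}^3)}^2$, while the usual mollifier bounds (using $\int \Grad\eta_\epsilon = 0$) give $\| \Grad v_\epsilon(t) \|_{L^\infty(\mathbb{T}^3)} \lesssim \epsilon^{\theta - 1} \| v(t) \|_{C^{0,\theta}(\mathbb{T}^3)}$. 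Since $\mathbb{T}^3$ has finite measure, the flux is therefore controlled by
\begin{equation*}
\left| \int_{\mathbb{T}^3} r_\epsilon(v,v) : \Grad v_\epsilon \dx \right| \lesssim \epsilon^{3\theta - 1} \, \| v(t) \|_{C^{0,\theta}(\mathbb{T}^3)}^3 .
\end{equation*}

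To conclude I would integrate the flux identity in time between arbitrary $0 < t_1 < t_2 < T$ and let $\epsilon \to 0^+$. The right-hand side is bounded by $\epsilon^{3\theta - 1} \int_{t_1}^{t_2} \| v(t) \|_{C^{0,\theta}(\mathbb{T}^3)}^3 \dt$, and since $v \in L^3((0,T); C^{0,\theta}(\mathbb{T}^3))$ and $3\theta - 1 > 0$ this tends to $0$. On the left, for a.e. $t$ the function $v(t)$ lies in $L^2(\mathbb{T}^3)$ and hence $v_\epsilon(t) \to v(t)$ strongly in $L^2(\mathbb{T}^3)$; thus $\| v(t_2) \|_{L^2(\mathbb{T}^3)} = \| v(t_1) \|_{L^2(\mathbb{T}^3)}$ for a.e. $t_1, t_2$, and for all $t_1, t_2$ after selecting the weakly time-continuous representative of $v$. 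The genuine obstacle, and the origin of the exponent $1/3$, is the scaling in the flux estimate: each of the two velocity increments in $r_\epsilon$ contributes a factor $\epsilon^\theta$ while the derivative falling on $v_\epsilon$ costs $\epsilon^{\theta - 1}$, so the product scales exactly as $\epsilon^{3\theta - 1}$ --- critical at $\theta = 1/3$ --- and it is precisely this cubic count that also forces the $L^3$ integrability in time. It is at this step that a smoother nonlinearity (as in the $\alpha$-models treated below) changes the count and is expected to relax the threshold below $1/3$.

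The \emph{negative} direction --- the existence, for every $\theta < 1/3$, of weak solutions in $L^\infty((0,T); L^2(\mathbb{T}^3))$ that dissipate energy --- is of a completely different nature, and I would not attempt it by the above methods: it is obtained through the convex-integration / Nash-type iteration scheme for the Euler equations initiated by De Lellis and Sz\'ekelyhidi, in which one constructs successive solutions of the Euler--Reynolds system with vanishing Reynolds stress by superposing fast oscillatory Beltrami-type corrections that both absorb the residual error and impose the prescribed energy profile; the sharp endpoint $\theta \to 1/3$ was reached by Isett, with the dissipative (admissible) version due to Buckmaster--De Lellis--Sz\'ekelyhidi--Vicol. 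Since this half plays no role in the $\alpha$-model results that follow, I would only cite it.
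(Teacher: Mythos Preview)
Your argument for the positive direction is correct and is precisely the Constantin--E--Titi commutator proof that the paper cites as \cite{constantin}. Note, however, that the paper does not itself prove Onsager's conjecture for the Euler equations: the statement is recorded as background, with both halves attributed to the literature (\cite{constantin,duchon} for conservation, \cite{isettproof,buckmasteradmissible} and predecessors for the convex-integration constructions), so there is no ``paper's own proof'' of this particular statement to compare against directly.

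That said, it is worth pointing out that the paper's methodology for the $\alpha$-models follows the \emph{other} classical route, namely the Duchon--Robert approach \cite{duchon}: rather than pairing the mollified equation with $v_\epsilon$ and isolating the commutator $r_\epsilon(v,v)$, one pairs the mollified equation with the unmollified $v\chi$ (and the original weak formulation with $v^\epsilon\chi$), derives a \emph{local} energy balance containing an explicit defect distribution of the form $D_\epsilon(x,t)=\tfrac12\int \nabla\phi_\epsilon(\xi)\cdot\delta v\,\lvert\delta v\rvert^2\,d\xi$, and then shows that this defect vanishes in the distributional limit under the regularity hypothesis. Your commutator argument yields the \emph{global} energy identity directly and is somewhat more economical for Euler itself; the Duchon--Robert formulation has the advantage of producing a pointwise-in-space energy equation and of making the structure of the defect term completely transparent --- which is exactly what the paper exploits in Remark~\ref{originremark} and Remark~\ref{fractionallaplacian} to read off the Onsager exponent for each $\alpha$-model by inspecting which increments $\delta u$, $\delta v$, $\delta\nabla u$ appear in the trilinear defect. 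Both approaches rest on the same cubic scaling count you articulate at the end of your positive-direction argument, and your treatment of the negative direction (citation only) matches the paper's.
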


In this context $\frac{1}{3}$ is often referred to as the Onsager exponent. The importance of Onsager's conjecture in mathematical fluid mechanics was pointed out in \cite{eyink}. The first half of Onsager's conjecture, i.e. about conservation of energy, was proved in \cite{eyink}, but with slightly stronger assumptions than $C^{0, \theta}$ with $\theta > \frac{1}{3}$. Subsequently, this part of Onsager's conjecture was proved (i.e. if $\theta > \frac{1}{3}$) in \cite{constantin} using Besov spaces and commutator estimates. A different proof of the first half was given in \cite{duchon}, which relied on a local equation of energy balance.

The paper \cite{robinson} studies the first half of the conjecture in the upper-half space $\mathbb{T}^2 \times \mathbb{R}_+$, subject to non-permeability boundary conditions on the wall/boundary $x_3 = 0$. This part of the conjecture was then proven in bounded domains with $C^2$ boundary in \cite{titi2018}. The results of \cite{titi2018} were improved in \cite{titi2019} by only requiring interior H\"older regularity (instead of on the whole domain), for this the regularity of the pressure needed to be studied more carefully (which was also done in \cite{bardos2021}). Finally, \cite{bardos2019,gwiazda} considered the analogue of the first half of Onsager's conjecture for the conservation of entropy and other companion laws in the general class of conservation laws. In particular, it established the universality of the exponent $\frac{1}{3}$ for this case (see also \cite{bardos2019-2} and references therein).

To establish the second half of the Onsager conjecture, namely the existence of non energy-conserving H\"older continuous weak solutions to the three-dimensional Euler equations, with H\"older exponent $\theta < \frac{1}{3}$ techniques from convex integration were implemented.
In \cite{isettproof} weak solutions with regularity up to $\frac{1}{3}$ were constructed which do not conserve energy, after gradual success in a sequence of papers \cite{scheffer,shnirelman,lellisinclusion,lellisadmissibility,lelliscontinuous,lellisdissipative,isettthesis,buckmaster,buckmasteralmost,daneri}. In \cite{buckmasteradmissible} energy-dissipating solutions were constructed with regularity less than $\frac{1}{3}$, while in \cite{novack} this was achieved in Besov spaces using an intermittent convex integration scheme.

It is natural to ask whether Onsager's conjecture can be generalised to other models, as was done in \cite{bardos2019,bardos2019-2,gwiazda,beekiealpha,boutroshydrostatic} (see also references therein and \cite{hezi}). Moreover, in recent years active scalar equations (including the surface quasi-geostrophic equation) have been the focus of several works, see for example \cite{buckmastersqg,akramov,isettsqg,novackquasigeostrophic,chengsqg} (and references therein). For many of these equations the Onsager conjecture remains open in the full range of exponents, including the 2D Euler equations \cite{buckmasterreview,buckmastersqg}.

In this paper we study several subgrid scale turbulence models (also referred to as the $\alpha$ models), such as the Euler-$\alpha$ equations. This model is the inviscid version of the Navier-Stokes-$\alpha$ equations (also known as the viscous Camassa-Holm equations) \cite{foias,camassaholm}.

The subgrid scale turbulence models have been studied because they approximate the Navier-Stokes equations. In the viscous case these models where shown to be successful subgrid scale models of turbulence when tested against experimental data. In particular, they have been shown to capture the right statistics of the Navier-Stokes equations for scales larger than the regularisation parameter $\alpha$.

Moreover, the Navier-Stokes-$\alpha$ equations have been found to be a good closure model of turbulence for channel and pipe flow (if one chooses a Helmholtz smoothing kernel), as there is agreement with experimental data \cite{bardina}. Further details can be found in \cite{camassaholm,foias,chen,chen2,chen3}.

Analytically, the viscous $\alpha$-models have been shown to be globally well-posed. Moreover, when one lets the regularisation parameter $\alpha$ go to zero they formally converge to the Navier-Stokes equations. This limit can be made rigorous. In fact, one can show that for a subsequence $(\alpha_j)$ of the regularisation parameter going to zero, the unique solutions of the Navier-Stokes-$\alpha$ equations converge to a Leray-Hopf weak solution of the Navier-Stokes equations (globally in time). In addition, one can prove that the solution of the Navier-Stokes-$\alpha$ model converges to the corresponding strong solution of the Navier-Stokes equations on its interval of existence. For further details see \cite{foias,camassaholm}.

The Euler-$\alpha$ equations also appear in several contexts. For example, the equations arise as a description of the geodesics on the volume-preserving diffeomorphism group of the $H^1$ norm \cite{ratiu,holm,shkoller}. Finally, they also can be derived from a specific choice of stress tensor for inviscid second-grade fluids \cite{busuiocsecond}.

Over the years other subgrid scale $\alpha$-models have been considered. Examples include the Bardina model \cite{bardina}, the Leray-$\alpha$ model \cite{lerayalpha}, the modified Leray-$\alpha$ model \cite{modifiedleray}, the Clark-$\alpha$ model \cite{clarkalpha} and
the Navier-Stokes-Voigt model \cite{voigt}. Finally, one can consider $\alpha$-type models for the MHD (magnetohydrodynamic) equations \cite{linshiz}.

In this paper we will study the conservation of energy-like quantities for the inviscid versions of these models and in particular deduce the corresponding Onsager exponents, which are the threshold Besov or H\"older exponents for energy conservation. The conserved quantity is not always the $L^2$ norm, but can also be the $H^1$ norm instead. What is important to emphasise is that in several cases, the exponent found is smaller than $1/3$, unlike for the models considered in \cite{bardos2019,bardos2019-2} (and references therein).

First we observe that the results of \cite{bardos2019-2,bardos2019} do not apply because the nonlinearity in the $\alpha$-models is nonlocal. For these models the advective velocity is more regular compared to the incompressible Euler equations, i.e. the nonlinearity is smoother. This means that compared to the Euler equations the $\alpha$-models have lower Onsager exponents and therefore weaker regularity conditions for energy conservation are expected.

The different subgrid scale models have different regularity thresholds due to several factors, such as the type of conserved quantity, the way the nonlinearity is regularised etc. This will be discussed further in the conclusion and in Remark \ref{originremark} below.

It is also important to mention that for the Leray-$\alpha$ MHD model \cite{linshiz} it is possible to `trade regularity assumptions', i.e. there are several ways to formulate the criterion for energy conservation. To be more precise, it is possible to weaken the conditions on the magnetic field $B$ if one strengthens the conditions on the velocity field $v$ such that energy conservation is still ensured (and vice versa). In other words, there is some flexibility when the conditions for conservation of the energy-like quantity are formulated.
A similar mechanism can be observed for the ordinary MHD equations \cite{buckmasterreview}.

In this paper we eventually consider the inviscid models:
\begingroup
\allowdisplaybreaks
\begin{align}
&\partial_t v + \nabla \cdot (u \otimes v) + \nabla p = 0, \tag{Leray-$\alpha$ model} \\
&\partial_t v + \nabla \cdot (u \otimes v) + \sum_{j=1}^3 v_j \nabla u_j + \nabla p = 0, \tag{Navier-Stokes-$\alpha$ model} \\
&\partial_t v + \nabla \cdot (u \otimes u) + \nabla p = 0,  \tag{Navier-Stokes-Voigt} \\
&\partial_t v + \nabla \cdot (u \otimes u) + \nabla p = 0,  \tag{Bardina model} \\
&\partial_t v +  \nabla \cdot (v \otimes u) + \nabla p = 0,  \tag{Modified Leray-$\alpha$} \\
&\partial_t v +  \nabla \cdot (u \otimes v) + \nabla \cdot (v \otimes u) - \nabla \cdot (u \otimes u) \nonumber \\
&- \alpha^2 \nabla \cdot (\nabla u \cdot \nabla u^T ) + \nabla p = 0, \tag{Clark-$\alpha$ model} \\
&\begin{cases}
\partial_t v + (u \cdot \nabla) v  + \nabla p + \frac{1}{2} \nabla \lvert B \rvert^2 = (B \cdot \nabla) B, \\
\partial_t B + (u \cdot \nabla) B - (B \cdot \nabla )v = 0.
\end{cases}\tag{Leray-$\alpha$ MHD model}
\end{align}
\endgroup
Throughout this paper the models will be subject to periodic boundary conditions on the three-dimensional flat torus $\mathbb{T}^3$. Therefore the domains will sometimes be omitted from now on. These models are incompressible, i.e.
\begin{equation*}
\nabla \cdot u = \nabla \cdot v = 0.
\end{equation*}
In addition $u$ is a Helmholtz regularisation of $v$, which is
\begin{equation} \label{helmholtz}
v = (1 - \alpha^2 \Delta) u.
\end{equation}
We also introduce the following Sobolev norm
\begin{equation} \label{H1norm}
\lVert u \rVert_{H^1}^2 = \lVert u \rVert_{L^2}^2 + \alpha^2 \lVert \nabla u \rVert_{L^2}.
\end{equation}

For all these models we consider the inviscid case $\nu = 0$ (and the irresistive case $\eta = 0$ for the Leray-$\alpha$ MHD model). Under that assumption the Navier-Stokes-$\alpha$ equations are called the Euler-$\alpha$ equations and the Navier-Stokes-Voigt model is called the Euler-Voigt model. Note that the inviscid Bardina model and the Euler-Voigt model are the same.

The Euler-Voigt model is of interest because it is an inviscid regularisation of the Euler equations. To be more precise, it has the same form as the Euler equations but with an additional term $-\alpha^2 \Delta \partial_t u$. Note that this additional term is independent of viscosity and that the Euler-Voigt model is globally well-posed with and without viscosity. It can be shown by classical methods that the unique weak solution of the Euler-Voigt model conserves energy \cite{bardina,larios-petersen-titi-wingate}. For this reason, there is no analogue of the Onsager conjecture for this model, which is why the Euler-Voigt model will not be considered any further in this work.

Our approach in this paper will be based on \cite{duchon}, and the results were part of \cite{boutrosthesis}. We first derive an equation describing the local energy balance. This equation contains a `defect term' which captures a (potential) lack of smoothness of the solution and physically describes the energy flux. In particular, it captures the possible dissipation of energy for weak solutions when the solutions are not smooth enough.

In our framework (cf. \cite{boutrosthesis}), we will show that under sufficiently strong regularity assumptions, this term will be zero. This in turn rules out the dissipation (or non-conservation) of energy.

The procedure of deriving the Onsager exponent will be exactly the same for all these models. Therefore we will only work out the detailed proof for the inviscid Leray-$\alpha$ model and the Euler-$\alpha$ equations. These models have a different type of conserved quantity (the $L^2$ norm and the $H^1$ norm respectively) and their weak formulations are quite different in nature. In the main text we just state the results for the three other models (the Modified Leray-$\alpha$, Clark-$\alpha$ and Leray-$\alpha$ MHD models) for the sake of brevity and because the cases of the Leray-$\alpha$ model and the Euler-$\alpha$ equations are illustrative of how the proofs work for the other models. The detailed proofs for the other models are then given in the appendices.

The conserved quantity for the Leray-$\alpha$ model is the $L^2$ norm in space of $v$. The conserved quantity for the Euler-$\alpha$, Modified Leray-$\alpha$ and Clark-$\alpha$ models is the $H^1$ norm in space of $u$ given by equation \eqref{H1norm}. The conserved quantity for the inviscid and irresistive Leray-$\alpha$ MHD model is given by
\begin{equation*}
\lVert v(\cdot , t) \rVert^2_{L^2} + \lVert B (\cdot, t) \rVert^2_{L^2}.
\end{equation*}
The different types of conserved quantities do not really alter the proof significantly. However, it is important to mention that the regularity assumptions for the turbulence models differ. For the inviscid Leray-$\alpha$ model we assume that the weak solutions are in the space $v \in L^\infty ((0,T); L^2 (\mathbb{T}^3))$, while for the Euler-$\alpha$ equations, the modified Leray-$\alpha$ and Clark-$\alpha $ models the regularity assumption is $u \in L^\infty ((0,T); H^1 (\mathbb{T}^3))$. For the MHD Leray-$\alpha$ model the assumption is $v, B \in L^\infty ((0,T); L^2 (\mathbb{T}^3))$.

We now outline the rest of the paper. In section \ref{leraysection} we prove the first half of the Onsager conjecture for the Leray-$\alpha$ model, and in section \ref{eulersection} we prove the result for the Euler-$\alpha$ equations. In section \ref{resultssection} we provide an overview of the sufficient conditions for energy conservation for all the models and provide some interpretation of the results.

We conclude in section \ref{conclusion}, where we discuss what determines the actual value of the Onsager exponent for a given model. In addition, we show that for a generalisation of the inviscid Leray-$\alpha$ model that there is a linear relationship between the degree of regularisation of the nonlinearity and the Onsager exponent (i.e. the threshold for energy conservation).

In the appendices we provide the detailed proofs for the other models we consider. Namely, in section \ref{modifiedleraysection} we consider the modified Leray-$\alpha$ model, while in section \ref{clarksection} we consider the Clark-$\alpha$ model. Finally, in section \ref{lerayMHDsection} we prove the result for the Leray-$\alpha$ MHD model.

We emphasise that in this paper we focus on the first half of Onsager's conjecture. The recent paper \cite{beekiealpha} treats both the first as well as the second half for the Euler-$\alpha$ equations.

This work is dedicated to the memory of Professor Charles R. Doering, who was a
great scientist and a very dear friend.
\section{The Leray-$\alpha$ model} \label{leraysection}
In this section, we prove the first half of Onsager's conjecture for the Leray-$\alpha$ model. We start by deriving an equation of local energy balance. This equation describes the time evolution of the conserved quantity in terms of an energy flux (which will be referred to as the defect term). We will first introduce some notation. We let $\phi : \mathbb{R}^3 \rightarrow \mathbb{R}$ be a standard $C^\infty_c $ radial mollifier satisfying $\int_{\mathbb{R}^3} \phi (x) = 1$ and let $\phi_\epsilon$ be defined by
\begin{equation*}
\phi_\epsilon \coloneqq \frac{1}{\epsilon^3} \phi \bigg( \frac{x}{\epsilon} \bigg).
\end{equation*}
We also introduce the notation $v^\epsilon \coloneqq v * \phi_\epsilon$. Throughout this paper we will be using the Einstein summation convention. Finally we will use the brackets $\langle \cdot, \cdot \rangle$ to denote the action of a distribution on a test function. We will first introduce the notion of a weak solution for the inviscid Leray-$\alpha$ model.
\begin{definition} \label{weaksolutiondefinition}
A pair of functions $v,p \in L^\infty ((0,T); L^2 (\mathbb{T}^3))$ is called a weak solution of the inviscid Leray-$\alpha$ model if for all $\psi \in \mathcal{D} (\mathbb{T}^3 \times (0,T) ; \mathbb{R}^3)$ and $\chi \in \mathcal{D} (\mathbb{T}^3 \times (0,T) ; \mathbb{R})$ it holds that
\begin{align}
&\int_0^T \int_{\mathbb{T}^3} v_i \partial_t \psi_i dx dt + \int_0^T \int_{\mathbb{T}^3} v_j u_i  \partial_i \psi_j dx dt + \int_0^T \int_{\mathbb{T}^3} p \partial_i \psi_i dx dt = 0, \label{weaksolution} \\
%&\int_0^T \int_{\mathbb{T}^3 } v_i \partial_i \chi dx = 0, \nonumber \\
&\int_0^T \int_{\mathbb{T}^3 } u_i \partial_i \chi dx = 0.
\end{align}
Note that the pressure is defined up to a constant. We fix this constant by assuming that
\begin{equation*}
\int_{\mathbb{T}^3} p(x,t) dx = 0.
\end{equation*}
\end{definition}
\begin{remark} \label{pressureremark}
For the Leray-$\alpha$ model we can derive the following equation for the pressure (which holds in the sense of distributions for test functions in $\mathcal{D} (\mathbb{T}^3 \times (0,T))$)
\begin{equation*}
\Delta p =- (\nabla \otimes \nabla) : (u \otimes v),
\end{equation*}
where we recall that $v \in L^\infty ((0,T); L^2 (\mathbb{T}^3))$ and $u=(I-\alpha^2 \Delta)^{-1} v \in L^\infty ((0,T); H^2 (\mathbb{T}^3) )$. By elliptic regularity it follows that
\begin{align*}
\lVert p \rVert_{L^2 }&\leq c \lVert u \otimes v \rVert_{L^2} \leq c \lVert u \rVert_{L^\infty} \lVert v \rVert_{L^2} \leq c \lVert u \rVert_{H^2} \lVert v \rVert_{L^2} \leq c \lVert v \rVert_{L^2}^2.
\end{align*}
for some constant $c$ (which may change from line to line). Note that we have used the Sobolev embedding theorem in three dimensions, $H^2(\mathbb{T}^3) \subset L^\infty(\mathbb{T}^3)$, in the above. For this reason we introduced the assumption that $p \in L^\infty ((0,T); L^2 (\mathbb{T}^3))$ in the definition of a weak solution for the Leray-$\alpha$ model.
\end{remark}
In fact, instead of $\mathcal{D} (\mathbb{T}^3 \times (0,T))$ one can consider a larger space of test functions, as we prove in the next lemma.
\begin{lemma} \label{generaltestfunctions}
A weak solution of the inviscid Leray-$\alpha$ model also satisfies identity \eqref{weaksolution} for functions $\psi \in W^{1,1}_0 ( (0,T); H^1 (\mathbb{T}^3)) $.
\end{lemma}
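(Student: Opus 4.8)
The plan is to extend the class of admissible test functions in the weak formulation~\eqref{weaksolution} from $\mathcal{D}(\mathbb{T}^3\times(0,T);\mathbb{R}^3)$ to $W^{1,1}_0((0,T);H^1(\mathbb{T}^3))$ by a density argument. First I would check that every term in~\eqref{weaksolution} makes sense, and is continuous, as a functional of $\psi$ with respect to the $W^{1,1}_0((0,T);H^1(\mathbb{T}^3))$ topology. For the time-derivative term $\int_0^T\int_{\mathbb{T}^3} v_i\,\partial_t\psi_i$ this is immediate from $v\in L^\infty((0,T);L^2)\subset L^\infty((0,T);(H^1)')$ paired against $\partial_t\psi\in L^1((0,T);H^1)$. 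For the pressure term $\int_0^T\int_{\mathbb{T}^3} p\,\partial_i\psi_i$ one uses $p\in L^\infty((0,T);L^2)$ (guaranteed by Remark~\ref{pressureremark}) and $\nabla\psi\in L^1((0,T);L^2)$. The nonlinear term $\int_0^T\int_{\mathbb{T}^3} v_j u_i\,\partial_i\psi_j$ is the one requiring a little care: here I would use that $u=(I-\alpha^2\Delta)^{-1}v\in L^\infty((0,T);H^2)\hookrightarrow L^\infty((0,T);L^\infty)$ by the three-dimensional Sobolev embedding, so that $v_j u_i\in L^\infty((0,T);L^2)$ with norm bounded by $c\|v\|_{L^\infty_t L^2}^2$, and this pairs continuously against $\nabla\psi\in L^1((0,T);L^2)$. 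Thus the whole left-hand side of~\eqref{weaksolution} is a bounded linear functional of $\psi\in W^{1,1}_0((0,T);H^1(\mathbb{T}^3))$.

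The second step is the approximation itself. Given $\psi\in W^{1,1}_0((0,T);H^1(\mathbb{T}^3))$, I would construct a sequence $\psi_k\in\mathcal{D}(\mathbb{T}^3\times(0,T);\mathbb{R}^3)$ with $\psi_k\to\psi$ in $W^{1,1}_0((0,T);H^1(\mathbb{T}^3))$. This is standard: mollify in the spatial variable using $\phi_\epsilon$ (which keeps values in $H^1$ and commutes with $\partial_t$), then mollify in time and multiply by a cutoff that vanishes near $t=0$ and $t=T$ to restore compact support in $(0,T)$ while preserving the boundary condition $\psi_k(\cdot,0)=\psi_k(\cdot,T)=0$. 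Since~\eqref{weaksolution} holds for each $\psi_k$ and both sides are continuous in the $W^{1,1}_0((0,T);H^1)$ norm by Step~1, passing to the limit $k\to\infty$ yields~\eqref{weaksolution} for $\psi$.

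The main obstacle — though a mild one — is ensuring the density of $\mathcal{D}(\mathbb{T}^3\times(0,T);\mathbb{R}^3)$ in $W^{1,1}_0((0,T);H^1(\mathbb{T}^3))$ and, in particular, handling the interaction between the temporal cutoff (needed for compact support in $(0,T)$) and the $L^1$-in-time norm of $\partial_t\psi$: one must verify that cutting off near the endpoints does not spoil convergence, which works precisely because $\psi\in W^{1,1}_0$ already vanishes at $t=0,T$ in the trace sense and $\partial_t\psi\in L^1((0,T);H^1)$ is absolutely continuous as a Bochner integral, so the contributions from shrinking boundary layers tend to zero. Once this is in place the result follows. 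I expect the argument to be short, essentially a verification that the nonlinear term behaves well under the $H^1$-in-space, $L^1$-in-time topology, which hinges on the regularising effect of the Helmholtz operator already recorded in Remark~\ref{pressureremark}.
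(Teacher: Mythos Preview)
Your proposal is correct and follows essentially the same route as the paper: a density argument in which one approximates $\psi\in W^{1,1}_0((0,T);H^1(\mathbb{T}^3))$ by smooth compactly supported functions and verifies that each of the three terms in \eqref{weaksolution} passes to the limit, using $u\in L^\infty_t H^2\hookrightarrow L^\infty_t L^\infty$ for the nonlinear term and $p\in L^\infty_t L^2$ from Remark~\ref{pressureremark} for the pressure term. The paper simply asserts the density of $\mathcal{D}(\mathbb{T}^3\times(0,T))$ in $W^{1,1}_0((0,T);H^1)$ where you outline its construction, but otherwise the arguments coincide.
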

\begin{proof}
For every $\psi \in W^{1,1}_0 ( (0,T); H^1 (\mathbb{T}^3)) $ we know that there exists a sequence of test functions $\psi_m \in \mathcal{D} ( \mathbb{T}^3 \times (0,T))$ that converge to $\psi$ in $W^{1,1}_0 ((0,T); H^1(\mathbb{T}^3))$. For each of the functions $\psi_m$, identity \eqref{weaksolution} holds. We observe that $v \cdot \partial_t \psi_m $ converges to $v \partial_t \psi$ in $L^1 ((0,T); L^{3/2} (\mathbb{T}^3))$ as $m \rightarrow \infty$. In particular, this implies that

\begin{equation*}
 \int_0^T \int_{\mathbb{T}^3} v \cdot \partial_t \psi_m dx dt \xrightarrow[]{m \rightarrow \infty}  \int_0^T \int_{\mathbb{T}^3} v \cdot \partial_t \psi dx dt.
\end{equation*}
Similarly, one can see that $v_j u_i \partial_i (\psi_m)_j$ converges to $v_j u_i \partial_i \psi_j$ in $L^1 ((0,T) \times \mathbb{T}^3)$ as $m \rightarrow \infty$ (by using that $H^2 (\mathbb{T}^3) \subset L^\infty (\mathbb{T}^3)$). Therefore, we have
\begin{equation*}
\int_0^T \int_{\mathbb{T}^3} v_j u_i  \partial_i (\psi_{m})_j dx dt \xrightarrow[]{m \rightarrow \infty} \int_0^T \int_{\mathbb{T}^3} v_j u_i  \partial_i \psi_j dx dt.
\end{equation*}
Finally, as mentioned in Remark \ref{pressureremark}, we know that $p \in L^\infty ( (0,T); L^2 (\mathbb{T}^3))$. Therefore $p \partial_i (\psi_m)$ converges to $p \partial_i \psi$ in $L^1 (\mathbb{T}^3 \times (0,T))$ as $m \rightarrow \infty$. Thus
\begin{equation*}
\int_0^T \int_{\mathbb{T}^3} p \partial_i (\psi_m) dx dt \xrightarrow[]{m \rightarrow \infty} \int_0^T \int_{\mathbb{T}^3} p \partial_i \psi_i dx dt.
\end{equation*}
%we have that $p \partial_i \psi_i \in L^{3/2} ((0,T); L^1 (\mathbb{T}^3))$. Therefore the last integral from equation \eqref{weaksolution} also converges.
As a result identity \eqref{weaksolution} also holds for test functions $\psi \in W^{1,1}_0 ((0,T); H^1 (\mathbb{T}^3))$.
\end{proof}
We now turn to establishing the equation of local energy balance.
\begin{theorem}[Equation of energy balance] \label{energyequationtheorem}
Let $v \in L^\infty ( (0,T); L^2 (\mathbb{T}^3))$ be a weak solution of the inviscid Leray-$\alpha$ model (as introduced in Definition \ref{weaksolutiondefinition}). Then $v$ satisfies the following equation of energy (which holds in the sense of distributions for test functions in $\mathcal{D} ( \mathbb{T}^3 \times (0,T))$)
\begin{equation} \label{lerayalpha}
\partial_t (\lvert v \rvert^2) + 2 \nabla \cdot ( p  v) + \nabla \cdot ( \lvert v \rvert^2 u ) + D_1(u,v) = 0,
\end{equation}
where the defect term $D_1 (u,v)$ is
\begin{equation*}
D_1 (u,v) (x,t) = \frac{1}{2} \lim_{\epsilon \rightarrow 0} \int_{\mathbb{R}^3} \nabla \phi_\epsilon (\xi) \cdot \delta u (\xi; x,t) \lvert  \delta v  (\xi; x,t) \rvert^2 d \xi,
\end{equation*}
and the notation means
\begin{equation*}
\delta w (\xi; x,t) \coloneqq w (x + \xi,t) - w(x,t).
\end{equation*}
Note that the defect term is independent of the choice of mollifer.

\end{theorem}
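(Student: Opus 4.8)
The plan is to adapt the Duchon--Robert local-energy argument to \eqref{weaksolution}, the key point being that the nonlinear cancellation must be carried out against the \emph{unmollified} field $v$; this is what produces the precise factor $\tfrac12$ in $D_1$, and it is the one place where a careless approach fails. \textbf{Step 1 (the mollified equation).} I would first show that for every $\epsilon>0$ the spatially mollified fields satisfy
\[
\partial_t v_i^\epsilon+\partial_k (u_kv_i)^\epsilon+\partial_i p^\epsilon=0
\qquad\text{in }\mathcal D'(\mathbb T^3\times(0,T)),
\]
by testing \eqref{weaksolution} with $\psi=\theta*\phi_\epsilon$, $\theta\in\mathcal D(\mathbb T^3\times(0,T);\mathbb R^3)$, and transferring the convolution onto $v$, $u\otimes v$ and $p$. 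Since $v^\epsilon$, $(u\otimes v)^\epsilon$ and $p^\epsilon$ are smooth in $x$ and bounded in $t$ (recall $u\in L^\infty$, $v\in L^\infty((0,T);L^2)$), this identity holds pointwise in $x$ for a.e.\ $t$ and promotes $v^\epsilon$ to $W^{1,\infty}((0,T);C^\infty(\mathbb T^3))$.

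\textbf{Step 2 (the test function $\varphi v^\epsilon$).} For scalar $\varphi\in\mathcal D(\mathbb T^3\times(0,T))$, the vector field $\varphi v^\epsilon$ lies in $W^{1,1}_0((0,T);H^1(\mathbb T^3))$ and is therefore an admissible test function by Lemma~\ref{generaltestfunctions}. Inserting $\psi=\varphi v^\epsilon$ into \eqref{weaksolution}, using $\nabla\cdot v^\epsilon=0$ to kill the term $p\varphi\,\partial_i v_i^\epsilon$, and then substituting the mollified equation of Step 1 for the factor $\partial_t v_i^\epsilon$ (and integrating by parts, again via $\nabla\cdot v=0$, in the pressure term this produces), I would collect everything into the form
\[
\int v_i v_i^\epsilon\,\partial_t\varphi
+\int \varphi\, v_i\big(u_k\partial_k v_i^\epsilon-\partial_k(u_kv_i)^\epsilon\big)
+\int p^\epsilon v_i\,\partial_i\varphi+\int v_j u_i v_j^\epsilon\,\partial_i\varphi+\int p\,v_i^\epsilon\,\partial_i\varphi=0 ,
\]
the decisive feature being that the bracket is multiplied by the \emph{unmollified} $v_i$.

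\textbf{Step 3 (commutator algebra).} Because $\nabla\cdot u=0$, one has the exact identity $\partial_k(u_kv_i)^\epsilon=u_k\partial_k v_i^\epsilon-\int_{\mathbb R^3}\partial_k\phi_\epsilon(\xi)\,\delta u_k\,\delta v_i\,d\xi$, so the bracket term equals $\int\varphi\,v_i\!\int\partial_k\phi_\epsilon\,\delta u_k\,\delta v_i\,d\xi$. The pointwise algebraic identity $v_i\,\delta v_i=\tfrac12\delta(|v|^2)-\tfrac12|\delta v|^2$ (summed in $i$) then turns this, \emph{without any remainder}, into
\[
\tfrac12\int\varphi\!\int\nabla\phi_\epsilon\cdot\delta u\,\delta(|v|^2)\,d\xi
-\tfrac12\int\varphi\!\int\nabla\phi_\epsilon\cdot\delta u\,|\delta v|^2\,d\xi .
\]
This is where I expect the only genuine subtlety: the computation must be organised (through the $\varphi v^\epsilon$ test function, rather than by multiplying the mollified PDE by $2v^\epsilon$) precisely so that this last identity is applied to $v_i$ and not $v_i^\epsilon$ --- the naive route would instead leave a commutator remainder $(v^\epsilon-v)\!\int\partial_k\phi_\epsilon\,\delta u_k\,\delta v$, which does not tend to zero in $\mathcal D'$ and would corrupt the coefficient of the defect.

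\textbf{Step 4 (passage to the limit).} It remains to let $\epsilon\to0$ in $\mathcal D'$. The inputs are $v^\epsilon\to v$ and $p^\epsilon\to p$ in $L^2(\mathbb T^3)$ for a.e.\ $t$, hence (dominated convergence, since $v,p\in L^\infty_tL^2_x$) in $L^2((0,T)\times\mathbb T^3)$, together with $u\in L^\infty((0,T)\times\mathbb T^3)$. These yield $v_iv_i^\epsilon\to|v|^2$, $p^\epsilon v_i\to pv_i$, $p v_i^\epsilon\to pv_i$ and $v_j v_j^\epsilon u_i\to|v|^2u_i$ in $L^1$, producing the terms $\partial_t|v|^2$, $2\nabla\cdot(pv)$ and $\nabla\cdot(|v|^2u)$ of \eqref{lerayalpha}. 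The auxiliary commutator disappears because $\int\nabla\phi_\epsilon\cdot\delta u\,\delta(|v|^2)\,d\xi=\nabla\cdot\big(u(|v|^2)^\epsilon-(u|v|^2)^\epsilon\big)\to0$ in $\mathcal D'$ (using $u\in L^\infty$, $|v|^2\in L^\infty_tL^1_x$). Since all the other terms converge, $\int\nabla\phi_\epsilon\cdot\delta u\,|\delta v|^2\,d\xi$ must itself converge in $\mathcal D'$; writing $D_1$ for $\tfrac12$ of its limit gives \eqref{lerayalpha}. Independence of the mollifier is then automatic, as \eqref{lerayalpha} forces $D_1=-\partial_t|v|^2-2\nabla\cdot(pv)-\nabla\cdot(|v|^2u)$, an expression with no reference to $\phi$.
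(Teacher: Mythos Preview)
Your proposal is correct and follows essentially the same Duchon--Robert strategy as the paper: mollify, pair the mollified equation with the unmollified $v$ (you do this by substituting for $\partial_t v^\epsilon$ inside the weak formulation tested with $\varphi v^\epsilon$, whereas the paper multiplies the mollified equation by $v\chi$ and subtracts the weak form with test function $\chi v^\epsilon$ --- these are equivalent), identify the commutator as the defect, and pass to the limit. Your handling of the commutator via the pointwise identity $v_i\,\delta v_i=\tfrac12\delta(|v|^2)-\tfrac12|\delta v|^2$ is a slightly more streamlined version of the paper's expansion of $D_{1,\epsilon}$ into four explicit terms, but the content and all convergence arguments are the same.
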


\begin{proof}
Mollifying the equation of the Leray-$\alpha$ model by convolution with $\phi_\epsilon$ gives that
\begin{equation} \label{mollifiedleray}
\partial_t v^\epsilon + \partial_i (u_i v)^\epsilon + \nabla p^\epsilon = 0.
\end{equation}
This equation holds pointwise a.e. in $\mathbb{T}^3 \times (0,T)$. This is because $v^\epsilon$ is $C^\infty$ in space and we can use the equation to gain time regularity for $v^\epsilon$. In particular, we observe that $\partial_i (u_i v_j)^\epsilon + \nabla p^\epsilon$ is in $L^\infty ( (0,T); C^\infty (\mathbb{T}^3) )$ which implies that $v^\epsilon \in W^{1, \infty} ( (0,T); C^\infty (\mathbb{T}^3))$. We take a test function $\chi \in \mathcal{D} ( \mathbb{T}^3 \times (0,T) ; \mathbb{R})$ and take the dot product of equation \eqref{mollifiedleray} with $v \chi \in L^\infty ((0,T); L^2 (\mathbb{T}^3))$. The resulting equation is
\begin{equation} \label{mollifiedleraymultiplied}
\chi v \cdot \partial_t v^\epsilon + \chi v_j \partial_i (u_i v_j)^\epsilon + \chi v \cdot \nabla p^\epsilon = 0,
\end{equation}
which holds almost everywhere in $\mathbb{T}^3 \times (0,T)$. Now we let the test function $\chi v^\epsilon$ act on the original inviscid Leray-$\alpha$ equation. Note that we can use $\chi v^\epsilon$ as a test function since it lies in $W^{1,\infty}_0 ((0,T); C^\infty (\mathbb{T}^3)) \subset W^{1,1}_0 ((0,T); H^1 (\mathbb{T}^3))$ (so we can apply Lemma \ref{generaltestfunctions}). We subtract the resulting equation from the integrated version of equation \eqref{mollifiedleraymultiplied}, which gives (where we have used the weak formulation from Definition \ref{weaksolutiondefinition})
\begin{align*}
\int_0^T \int_{\mathbb{T}^3} \bigg[ \chi v \cdot \partial_t v^\epsilon - v \partial_t (\chi v^\epsilon) + \chi v_j \partial_i (u_i v_j)^\epsilon - v_j u_i \partial_i (\chi v_j^\epsilon) + \chi v \cdot \nabla p^\epsilon - p \partial_i (v^\epsilon_i \chi) \bigg] dx dt = 0.
\end{align*}
Now we rewrite the different parts of the above equation, we see that
\begin{equation*}
\int_0^T \int_{\mathbb{T}^3} \bigg[ \chi v \cdot \partial_t v^\epsilon - v \cdot \partial_t (\chi v^\epsilon) \bigg] dx dt = - \int_0^T \int_{\mathbb{T}^3} (v \cdot v^\epsilon) \partial_t \chi dx dt = \langle \partial_t (v \cdot v^\epsilon), \chi \rangle.
\end{equation*}
%The right-hand side equals $\partial_t (v \cdot v^\epsilon)$ in a weak sense.
To handle the advective terms, we introduce a defect term
\begin{equation*}
D_{1,\epsilon} (u,v) (x,t) \coloneqq \frac{1}{2} \int_{\mathbb{R}^3} \nabla_\xi \phi_\epsilon (\xi) \cdot \delta u (\xi ;x,t) \lvert \delta v (\xi; x,t ) \rvert^2 d \xi,
\end{equation*}
observe that
\begin{equation*}
D_{1,\epsilon} (u,v) = - \frac{1}{2} \partial_i (u_i v_j v_j)^\epsilon + \frac{1}{2} u_i \partial_i ( v_j v_j)^\epsilon + v_j \partial_i (v_j u_i)^\epsilon - u_i v_j \partial_i v_j^\epsilon,
\end{equation*}
which holds almost everywhere in $\mathbb{T}^3 \times (0,T)$. Now we can rewrite the advective terms as
\begingroup
\allowdisplaybreaks
\begin{align*}
&\int_0^T \int_{\mathbb{T}^3} \bigg[\chi v_j \partial_i (u_i v_j)^\epsilon - v_j u_i \partial_i (\chi v_j^\epsilon) \bigg] dx dt = \int_0^T \int_{\mathbb{T}^3} \bigg[\chi v_j \partial_i (u_i v_j)^\epsilon - \chi v_j u_i \partial_i  v_j^\epsilon - v_j u_i v_j^\epsilon \partial_i \chi  \bigg] dx dt\\
&= \int_0^T \int_{\mathbb{T}^3} \bigg[\chi D_{1,\epsilon} (u,v) + \frac{1}{2} \chi \partial_i (u_i v_j v_j)^\epsilon - \frac{1}{2} \chi u_i \partial_i  ( v_j v_j)^\epsilon - v_j u_i v_j^\epsilon \partial_i \chi  \bigg] dx dt \\
&= \int_0^T \int_{\mathbb{T}^3} \bigg[\chi D_{1,\epsilon} (u,v) + \frac{1}{2}\bigg(( v_j v_j)^\epsilon u_i  -(u_i v_j v_j)^\epsilon \bigg) \partial_i \chi   - v_j u_i v_j^\epsilon \partial_i \chi  \bigg] dx dt \\
&= \bigg\langle D_{1,\epsilon} (u,v) + \nabla \cdot ( (\lvert v \rvert^2 u)^\epsilon - (\lvert v \rvert^2)^\epsilon u ) + \nabla \cdot ( (v \cdot v^\epsilon) u) , \chi \bigg\rangle.
\end{align*}
\endgroup
Next, we show that $( v_j v_j)^\epsilon u_i  -(u_i v_j v_j)^\epsilon \rightarrow 0$ in $L^\infty ((0,T); L^1 (\mathbb{T}^3))$ as $\epsilon \rightarrow 0$. One can see that since $u_i (\cdot, t) \in H^2 (\mathbb{T}^3) \subset L^\infty (\mathbb{T}^3)$ and $v_j (\cdot, t) \in L^2 (\mathbb{T}^3)$ then $u_i v_j v_j (\cdot, t) \in L^1 (\mathbb{T}^3)$, thus
\begin{align*}
\lVert (u_i v_j v_j)^\epsilon - u_i ( v_j v_j)^\epsilon  \rVert_{L^1 } &\leq \lVert (u_i v_j v_j)^\epsilon - u_i v_j v_j  \rVert_{L^1 } + \lVert u_i v_j v_j - u_i ( v_j v_j)^\epsilon  \rVert_{L^1 } \\
&\leq \lVert (u_i v_j v_j)^\epsilon - u_i v_j v_j  \rVert_{L^1 } + \lVert u_i \rVert_{L^\infty } \lVert v_j v_j - ( v_j v_j)^\epsilon  \rVert_{L^{1} } \xrightarrow[]{\epsilon \rightarrow 0} 0.
\end{align*}
Consequently, the advective terms converge to $\langle D_1 (u,v) + \nabla \cdot ( \lvert v \rvert^2 u), \chi \rangle$ in the sense of distributions as $\epsilon \rightarrow 0$, where $D_1 (u,v) \coloneqq \lim_{\epsilon \rightarrow 0} D_{1,\epsilon} (u,v)$.
Finally, we get that
\begin{align*}
\int_0^T \int_{\mathbb{T}^3} \bigg[ \chi v \cdot \nabla p^\epsilon - p \partial_i (v^\epsilon_i \chi) \bigg] dx dt &= \int_0^T \int_{\mathbb{T}^3} \bigg[ \psi v \cdot \nabla p^\epsilon - p v^\epsilon_i \partial_i  \chi \bigg] dx dt \\
&= - \int_0^T \int_{\mathbb{T}^3} \bigg[ p^\epsilon v_i \partial_i \chi + p v^\epsilon_i \partial_i  \chi \bigg] dx dt \\
&= \langle \nabla \cdot (p^\epsilon v + p v^\epsilon), \chi \rangle.
\end{align*}
Combining these results we get the following equation of energy (which holds in the sense of distributions with test functions in $\mathcal{D} ( \mathbb{T}^3 \times (0,T))$)
\begin{equation*}
\partial_t (v \cdot v^\epsilon) + D_{1,\epsilon} (u,v) + \nabla \cdot ( (v \cdot v^\epsilon) u) - \frac{1}{2} \nabla \cdot ((\lvert v \rvert^2)^\epsilon u) + \frac{1}{2} \nabla \cdot (\lvert v \rvert^2 u)^\epsilon + \nabla \cdot (p^\epsilon v + p v^\epsilon) = 0.
\end{equation*}
From this equation we can obtain that the limit $\lim_{\epsilon \rightarrow 0} D_{1,\epsilon} (u)$ is a well-defined distributional limit, as all the other terms converge. As was argued before, we will have that $-\frac{1}{2} \nabla \cdot ((\lvert v \rvert^2)^\epsilon u) + \frac{1}{2} \nabla \cdot (\lvert v \rvert^2 u)^\epsilon \xrightarrow[]{\epsilon \rightarrow 0} 0$ as a distributional limit. Moreover, since $v \cdot v^\epsilon \rightarrow \lvert v \rvert^2$ in $L^\infty ((0,T); L^1 (\mathbb{T}^3))$, it follows that $\partial_t (v \cdot v^\epsilon) \rightarrow \partial_t \lvert v \rvert^2$ in the distributional limit.
We observe that we can write the following equation for the defect term
\begin{equation*}
D_1 (u,v)  = - \partial_t \lvert v \rvert^2 - \nabla \cdot ( \lvert v \rvert^2 u)  - \nabla \cdot (2 p v).
\end{equation*}
Observe that the right-hand side is completely independent of the choice of mollifier, as a result the defect term is independent of the choice of mollifier.
We end up with the equation
\begin{equation*}
\partial_t \lvert v \rvert^2 + D_1 (u,v) + \nabla \cdot ( \lvert v \rvert^2 u)  + \nabla \cdot (2 p v) = 0,
\end{equation*}
which concludes the proof.
\end{proof}
\begin{proposition}  \label{zerodefect}
Let $v$ be a weak solution of the inviscid Leray-$\alpha$ model. Moreover, we assume that
\begin{equation} \label{leraycondition}
\int_{\mathbb{T}^3} \lvert \delta u \rvert \lvert \delta v \rvert^2 dx \leq C(t) \lvert \xi \rvert \sigma_1 ( \lvert \xi \rvert),
\end{equation}
where $C \in L^1 (0,T)$ and $\sigma_1 \in L^\infty_{\mathrm{loc}} (\mathbb{R})$, with the property that $\sigma_1 (\lvert \xi \rvert) \rightarrow 0$ as $\lvert \xi \rvert \rightarrow 0$. Then $D_1 (u,v) = 0$.
\end{proposition}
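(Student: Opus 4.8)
The plan is to show that $D_1(u,v)$ vanishes as a distribution on $\mathbb{T}^3 \times (0,T)$, i.e. that $\langle D_1(u,v),\chi\rangle = 0$ for every $\chi \in \mathcal{D}(\mathbb{T}^3 \times (0,T))$. By Theorem \ref{energyequationtheorem} we already know that $D_{1,\epsilon}(u,v) \to D_1(u,v)$ in the sense of distributions, so it suffices to prove that $\langle D_{1,\epsilon}(u,v),\chi\rangle \to 0$ as $\epsilon \to 0$ for each fixed test function $\chi$. First I would write out
\[
\langle D_{1,\epsilon}(u,v),\chi\rangle = \frac{1}{2}\int_0^T\int_{\mathbb{T}^3}\chi(x,t)\int_{\mathbb{R}^3}\nabla\phi_\epsilon(\xi)\cdot\delta u(\xi;x,t)\,|\delta v(\xi;x,t)|^2\,d\xi\,dx\,dt,
\]
and use Fubini to exchange the $x$- and $\xi$-integrations; this is legitimate because $\nabla\phi_\epsilon$ is bounded and supported in $\{|\xi|\le\epsilon\}$, while the triple integrand is absolutely integrable thanks to the bound \eqref{leraycondition} and $C \in L^1(0,T)$.

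Next I would estimate crudely: bounding $|\chi|$ by $\|\chi\|_{L^\infty(\mathbb{T}^3\times(0,T))}$, using $|\nabla\phi_\epsilon(\xi)\cdot\delta u|\le|\nabla\phi_\epsilon(\xi)|\,|\delta u|$, and applying hypothesis \eqref{leraycondition} to the inner $x$-integral, I obtain
\[
|\langle D_{1,\epsilon}(u,v),\chi\rangle| \le \frac{1}{2}\,\|\chi\|_{L^\infty}\left(\int_0^T C(t)\,dt\right)\int_{\mathbb{R}^3}|\nabla\phi_\epsilon(\xi)|\,|\xi|\,\sigma_1(|\xi|)\,d\xi .
\]
The decisive point is then a scaling computation for the last integral. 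Since $\nabla\phi_\epsilon(\xi) = \epsilon^{-4}(\nabla\phi)(\xi/\epsilon)$ is supported in $\{|\xi|\le\epsilon\}$, substituting $\eta = \xi/\epsilon$ gives $\int_{\mathbb{R}^3}|\nabla\phi_\epsilon(\xi)|\,|\xi|\,d\xi = \int_{\mathbb{R}^3}|\nabla\phi(\eta)|\,|\eta|\,d\eta$, a finite constant independent of $\epsilon$ — here the extra factor $\epsilon^{-1}$ carried by $\nabla\phi_\epsilon$ (compared with $\phi_\epsilon$) is exactly compensated by the factor $|\xi|$ on the right-hand side of \eqref{leraycondition}. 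On the support of $\nabla\phi_\epsilon$ we have $\sigma_1(|\xi|)\le\omega(\epsilon):=\sup_{0<r\le\epsilon}\sigma_1(r)$, and $\omega(\epsilon)\to 0$ as $\epsilon\to 0^+$ since $\sigma_1(r)\to 0$ as $r\to 0$ (with $\sigma_1\in L^\infty_{\mathrm{loc}}$ keeping $\omega(\epsilon)$ finite). Hence
\[
|\langle D_{1,\epsilon}(u,v),\chi\rangle| \le \frac{1}{2}\,\|\chi\|_{L^\infty}\,\|C\|_{L^1(0,T)}\,\Big(\int_{\mathbb{R}^3}|\nabla\phi(\eta)|\,|\eta|\,d\eta\Big)\,\omega(\epsilon) \xrightarrow[]{\epsilon \rightarrow 0} 0.
\]
Combining this with the distributional convergence $D_{1,\epsilon}\to D_1$ from Theorem \ref{energyequationtheorem} yields $\langle D_1(u,v),\chi\rangle = 0$ for all $\chi$, i.e. $D_1(u,v)=0$.

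The argument is essentially bookkeeping, and I do not expect a serious obstacle; the only step requiring genuine care is the scaling identity, namely checking that the growth $\|\nabla\phi_\epsilon\|_{L^1}\sim\epsilon^{-1}$ is precisely absorbed by the factor $|\xi|$ built into hypothesis \eqref{leraycondition}, so that what remains is the vanishing modulus $\omega(\epsilon)$ coming from $\sigma_1(r)\to 0$. Two minor points worth flagging: the radiality of $\phi$ plays no role here — only that $\phi\in\DC$ with unit mass is used — and it suffices that \eqref{leraycondition} hold for a.e.\ small $\xi$, which is all that is invoked since $\nabla\phi_\epsilon$ is supported near the origin.
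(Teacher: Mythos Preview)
Your proof is correct and follows essentially the same approach as the paper: both arguments apply the hypothesis \eqref{leraycondition} after a Fubini swap and then use the scaling $\xi = \epsilon \eta$ to see that $\int |\nabla\phi_\epsilon(\xi)|\,|\xi|\,d\xi$ is an $\epsilon$-independent constant, leaving only the vanishing factor coming from $\sigma_1$. The only cosmetic differences are that the paper bounds $\|D_{1,\epsilon}\|_{L^1_{t,x}}$ directly (rather than pairing with $\chi$) and invokes the dominated convergence theorem on $\sigma_1(\epsilon|\eta|)$ instead of your sup bound $\omega(\epsilon)$.
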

\begin{proof}
By the change of variable $\xi = \epsilon z$ one can calculate that
\begingroup
\allowdisplaybreaks
\begin{align*}
\int_0^T \int_{\mathbb{T}^3}& \lvert D_{1,\epsilon } (u,v) (x,t) \rvert dx \leq \int_0^T \int_{\mathbb{R}^3} \lvert \nabla_\xi \phi_\epsilon (\xi ) \rvert \int_{\mathbb{T}^3} \lvert \delta u \rvert \lvert \delta v \rvert^2 dx d\xi \\ & \leq \int_0^T C(t) d t\int_{\mathbb{R}^3} \lvert \nabla_\xi \phi_\epsilon (\xi ) \rvert \lvert \xi \rvert \sigma_1 ( \lvert \xi \rvert) d\xi
= \int_0^T C(t) d t\int_{\mathbb{R}^3} \lvert \nabla_z \phi (z ) \rvert \lvert z \rvert \sigma_1 ( \epsilon \lvert z \rvert) dz \xrightarrow[]{\epsilon \rightarrow 0} 0,
\end{align*}
\endgroup
where we used the Lebesgue dominated convergence theorem to justify the limit in the last step as $\epsilon \rightarrow 0$. Moreover, note that $\lvert \delta u \rvert \lvert \delta v \rvert^2 \in L^\infty ((0,T); L^1 (\mathbb{T}^3))$ so one can swap the order of integration in the first inequality. Consequently, one has  $D_1 (u,v)=0$.
\end{proof}
In order to prove the sufficient condition for energy conservation, we will use the following inequality for functions in Besov spaces (see, e.g., \cite{constantin,leoni})
\begin{equation} \label{besovinequality}
\lVert f (\cdot + \xi) - f(\cdot) \rVert_{L^p} \leq C \lvert \xi \rvert^\theta \lVert f \rVert_{B^\theta_{p,\infty}},
\end{equation}
which holds for $1 \leq p \leq \infty$, $\theta > 0$ and some constant $C$.
\begin{proposition} \label{suffconditionleray}
If $v$ is a weak solution of the Leray-$\alpha$ model such that $v \in L^3 ((0,T); B^{\theta}_{3,\infty} (\mathbb{T}^3))$ with $\theta > 0$, then it holds that $D_1 (u,v) = 0$. In particular, the solution conserves energy.
\end{proposition}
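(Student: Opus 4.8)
The plan is to deduce the statement from Proposition~\ref{zerodefect} by verifying its structural hypothesis \eqref{leraycondition}. The mechanism that makes a threshold $\theta>0$ (rather than $\theta>1/3$) work is that, by the Helmholtz relation \eqref{helmholtz} and elliptic regularity, the advective field $u$ is two derivatives smoother than $v$: from $v\in L^\infty((0,T);L^2(\mathbb{T}^3))$ one has $u=(I-\alpha^2\Delta)^{-1}v\in L^\infty((0,T);H^2(\mathbb{T}^3))$ with $\lVert u(\cdot,t)\rVert_{H^2}\le c\lVert v(\cdot,t)\rVert_{L^2}$, as already used in Remark~\ref{pressureremark}. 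In particular $\nabla u(\cdot,t)\in H^1(\mathbb{T}^3)\hookrightarrow L^6(\mathbb{T}^3)\hookrightarrow L^3(\mathbb{T}^3)$, so that a full power of $\lvert\xi\rvert$ can be extracted from the increment $\delta u$ in the $L^3$ norm.

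Concretely, I would first split the integrand by Hölder's inequality with three equal exponents,
\[
\int_{\mathbb{T}^3}\lvert\delta u\rvert\,\lvert\delta v\rvert^2\dx\le\lVert\delta u(\xi;\cdot,t)\rVert_{L^3}\,\lVert\delta v(\xi;\cdot,t)\rVert_{L^3}^2 .
\]
For the first factor I would write $\delta u(\xi;\cdot,t)=\int_0^1\xi\cdot\nabla u(\cdot+s\xi,t)\,ds$ and use Minkowski's integral inequality to get $\lVert\delta u(\xi;\cdot,t)\rVert_{L^3}\le\lvert\xi\rvert\,\lVert\nabla u(\cdot,t)\rVert_{L^3}\le c\lvert\xi\rvert\,\lVert u(\cdot,t)\rVert_{H^2}\le c\lvert\xi\rvert\,\lVert v(\cdot,t)\rVert_{L^2}$. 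For the second factor I would invoke the Besov estimate \eqref{besovinequality}, giving $\lVert\delta v(\xi;\cdot,t)\rVert_{L^3}\le C\lvert\xi\rvert^{\theta}\lVert v(\cdot,t)\rVert_{B^\theta_{3,\infty}}$. Combining the two yields
\[
\int_{\mathbb{T}^3}\lvert\delta u\rvert\,\lvert\delta v\rvert^2\dx\le C\,\lvert\xi\rvert^{1+2\theta}\,\lVert v(\cdot,t)\rVert_{L^2}\,\lVert v(\cdot,t)\rVert_{B^\theta_{3,\infty}}^2 ,
\]
which is precisely of the form $C(t)\lvert\xi\rvert\,\sigma_1(\lvert\xi\rvert)$ with $\sigma_1(r)=Cr^{2\theta}$ and $C(t)=C\lVert v(\cdot,t)\rVert_{L^2}\lVert v(\cdot,t)\rVert_{B^\theta_{3,\infty}}^2$. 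Since $\theta>0$, $\sigma_1\in L^\infty_{\mathrm{loc}}(\mathbb{R})$ and $\sigma_1(r)\to0$ as $r\to0$; and since $\lVert v\rVert_{L^\infty((0,T);L^2)}<\infty$ while $\lVert v(\cdot,t)\rVert_{B^\theta_{3,\infty}}^2\in L^{3/2}(0,T)\subset L^1(0,T)$ by hypothesis, we get $C\in L^1(0,T)$. Proposition~\ref{zerodefect} then gives $D_1(u,v)=0$. Finally, testing the energy balance \eqref{lerayalpha} against test functions depending on $t$ only kills the three divergence terms after integration over $\mathbb{T}^3$ (the fluxes $pv$ and $\lvert v\rvert^2 u$ lie in $L^\infty((0,T);L^1(\mathbb{T}^3))$, using $u\in L^\infty L^\infty$ and $p,v\in L^\infty L^2$), so $\partial_t\int_{\mathbb{T}^3}\lvert v\rvert^2\dx=0$ in $\mathcal{D}'((0,T))$ and the spatial $L^2$ norm of $v$ is conserved.

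There is no genuine analytic obstacle here once the structure is identified: all the gain comes from the free factor $\lvert\xi\rvert$ that the Helmholtz smoothing supplies in $\delta u$. The one point that requires care is the time-integrability bookkeeping — one must keep exactly two powers of the Besov norm (and a single power of the bounded $L^2$ norm) rather than all three powers on $v$, so that the hypothesis $v\in L^3((0,T);B^\theta_{3,\infty})$ is used sharply and no more; and one should note the endpoint $\theta=0$ fails because then $\sigma_1$ does not vanish at $0$.
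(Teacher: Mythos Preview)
Your proof is correct and follows the same approach as the paper: verify condition \eqref{leraycondition} via H\"older's inequality and the Besov increment estimate \eqref{besovinequality}, then apply Proposition~\ref{zerodefect}. The only cosmetic difference is in how $\lVert\delta u\rVert_{L^3}$ is bounded --- the paper uses the Besov regularity $u\in B^{2+\theta}_{3,\infty}$ directly to arrive at $C(t)=\lVert v(\cdot,t)\rVert_{B^\theta_{3,\infty}}^3$, whereas you route through $\lVert\nabla u\rVert_{L^3}\le c\lVert v\rVert_{L^2}$ to get $C(t)=\lVert v\rVert_{L^2}\lVert v\rVert_{B^\theta_{3,\infty}}^2$; both choices lie in $L^1(0,T)$ and the rest is identical.
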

\begin{proof}
Condition \eqref{leraycondition} can be verified quite easily to see that
\begin{equation*}
\int_{\mathbb{T}^3} \lvert \delta u \rvert \lvert \delta v \rvert^2 dx \leq \lvert \xi \rvert^{1 + 2 \theta} \lVert v \rVert_{B^\theta_{3,\infty}}^3,
\end{equation*}
which follows from using inequality \eqref{besovinequality}.
Then we can take $\sigma_1 (\lvert \xi \rvert) \coloneqq \lvert \xi \rvert^{2\theta}$, which tends to zero as $\lvert \xi \rvert \to 0$. Therefore, the proof can be completed thanks to  Proposition \ref{zerodefect}.
\end{proof}

Having analysed the defect term, we can prove the conservation of energy. This comes down to a suitable choice of test function (which ensures that the divergence term vanishes) and then using the Lebesgue differentiation theorem to prove conservation of the energy.
\begin{theorem}[Conservation of energy] \label{conservationtheorem}
Suppose $v \in L^\infty ( (0,T); L^2 (\mathbb{T}^3))$ is a weak solution of the inviscid Leray-$\alpha$ model and in addition it satisfies the conditions of Proposition \ref{zerodefect} (for example when $v \in L^3 ((0,T); B^s_{3,\infty} (\mathbb{T}^3))$ with $s > 1$, by Proposition \ref{suffconditionleray}). Then conservation of energy holds, i.e.
\begin{equation*}
\lVert v(t_1, \cdot) \rVert_{L^2} = \lVert v(t_2, \cdot) \rVert_{L^2},
\end{equation*}
for almost all $t_1, t_2 \in (0,T)$.
\end{theorem}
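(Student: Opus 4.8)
The plan is to integrate the local energy balance \eqref{lerayalpha} in space against a test function depending only on time, which annihilates every divergence term and the pressure term, and then to upgrade the resulting distributional identity in $t$ to a pointwise statement via the Lebesgue differentiation theorem, in the spirit of \cite{duchon}.

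First, by Theorem \ref{energyequationtheorem} the weak solution $v$ satisfies \eqref{lerayalpha} in $\mathcal{D}'(\mathbb{T}^3\times(0,T))$, and by the standing hypothesis (through Proposition \ref{zerodefect}, for instance under the Besov condition of Proposition \ref{suffconditionleray}) the defect term vanishes, $D_1(u,v)=0$. Hence
$$\partial_t(\lvert v\rvert^2)+2\nabla\cdot(pv)+\nabla\cdot(\lvert v\rvert^2 u)=0$$
as distributions on $\mathbb{T}^3\times(0,T)$. I would then test this identity against $\chi(x,t)=\eta(t)$ with $\eta\in\mathcal{D}((0,T))$, which is a legitimate element of $\mathcal{D}(\mathbb{T}^3\times(0,T))$ since $\mathbb{T}^3$ is compact. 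Because $\nabla_x\eta\equiv 0$, the two divergence terms contribute nothing; here one records that $pv\in L^\infty((0,T);L^1(\mathbb{T}^3))$ (as $p,v\in L^2$) and $\lvert v\rvert^2 u\in L^\infty((0,T);L^1(\mathbb{T}^3))$ (as $\lvert v\rvert^2\in L^1$ and $u\in H^2\subset L^\infty$), so that integration by parts in $x$ over the torus is justified and the (boundary-free) integral of a spatial divergence is zero. What remains is
$$\int_0^T E(t)\,\eta'(t)\,\mathrm{d}t=0\qquad\text{for all }\eta\in\mathcal{D}((0,T)),$$
where $E(t):=\int_{\mathbb{T}^3}\lvert v(x,t)\rvert^2\,\mathrm{d}x$ belongs to $L^\infty(0,T)$ because $v\in L^\infty((0,T);L^2(\mathbb{T}^3))$.

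The last step deduces that $E$ is almost everywhere constant. The shortest route is that a distribution on $(0,T)$ with vanishing derivative is constant, giving $E(t_1)=E(t_2)$ for a.e. $t_1,t_2$. Equivalently, and more in keeping with the wording of the statement, fix two Lebesgue points $t_1<t_2$ of $E$ and insert (after a routine mollification to make it admissible) the continuous piecewise-linear $\eta_h$ equal to $1$ on $[t_1,t_2]$, equal to $0$ off $(t_1-h,t_2+h)$, and affine on the two intervals of length $h$; since $\eta_h'=\tfrac1h\mathbf{1}_{(t_1-h,t_1)}-\tfrac1h\mathbf{1}_{(t_2,t_2+h)}$, one gets
$$0=\int_0^T E(t)\,\eta_h'(t)\,\mathrm{d}t=\frac1h\int_{t_1-h}^{t_1}E(t)\,\mathrm{d}t-\frac1h\int_{t_2}^{t_2+h}E(t)\,\mathrm{d}t\ \xrightarrow[h\to 0]{}\ E(t_1)-E(t_2)$$
by the Lebesgue differentiation theorem. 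Either way, $\lVert v(t_1,\cdot)\rVert_{L^2}=\lVert v(t_2,\cdot)\rVert_{L^2}$ for almost every $t_1,t_2\in(0,T)$.

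Every ingredient here is elementary once $D_1(u,v)=0$ is available, so I do not expect a serious obstacle; the only two points that merit care are (i) verifying that $x$-independent functions are admissible test functions for \eqref{lerayalpha} and that the divergence and pressure terms then integrate to zero over $\mathbb{T}^3$, which reduces precisely to the $L^1$-integrability statements above, and (ii) the standard but not entirely automatic passage from $E'=0$ in $\mathcal{D}'((0,T))$ to the pointwise equality of $E$ at its Lebesgue points.
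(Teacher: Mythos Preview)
Your proposal is correct and follows essentially the same route as the paper: test the local energy balance (with $D_1=0$) against a function of $t$ alone so that the divergence and pressure terms vanish, then conclude via the Lebesgue differentiation theorem. The paper's only cosmetic difference is that it writes down an explicit smooth cut-off $\psi_1(t)=\int_0^t\big(\varphi_\epsilon(t'-t_1)-\varphi_\epsilon(t'-t_2)\big)\,dt'$ and passes $\epsilon\to 0$ directly, rather than first recording $E'=0$ in $\mathcal{D}'((0,T))$ and then choosing a piecewise-linear bump; both arguments are equivalent.
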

\begin{proof}
Using Proposition \ref{zerodefect} together with the local equation of energy established in Theorem \ref{energyequationtheorem} gives us that
\begin{equation*}
\partial_t (\lvert v \rvert^2) + 2 \nabla \cdot ( p  v) + \nabla \cdot ( \lvert v \rvert^2 u )  = 0.
\end{equation*}
This equation is valid in the distributional sense, i.e. for any test function $\psi \in \mathcal{D} ( (0,T) \times \mathbb{T}^3)$ it holds that
\begin{equation} \label{distributionalequation}
\int_0^T \int_{\mathbb{T}^3} \frac{1}{2} \lvert v \rvert^2 \partial_t \psi dx dt= - \int_0^T \int_{\mathbb{T}^3} \nabla \psi \cdot \bigg(  \frac{1}{2} \lvert v \rvert^2 u + v p \bigg) \bigg) dx dt.
\end{equation}
We want to show that the $L^2$ norm of $v$ at time $t_1$ equals the $L^2$ norm at $t_2$ (where $t_1 < t_2$). Let $\varphi: \mathbb{R} \rightarrow \mathbb{R}$ be a standard $C^\infty_c$ mollifier with $\int_{\mathbb{R}} \varphi (t) dt = 1$ with support contained in $[-1,1]$. We introduce the notation
\begin{equation*}
\varphi_\epsilon (t) \coloneqq \frac{1}{\epsilon} \varphi \bigg( \frac{t}{\epsilon} \bigg).
\end{equation*}
We make the following choice of test function
\begin{equation*}
\psi_1 (t) = \int_0^t \varphi_{\epsilon} (t'-t_1) - \varphi_{\epsilon} (t' - t_2) dt',
\end{equation*}
for $\epsilon$ sufficiently small. Because of the properties of mollifiers, one can see that the function $\psi_1(t)=0$ for $t\in (0, t_1-\epsilon) \cup (t_2+\epsilon, T)$, and $\psi_1(t)=1$ for $t\in (t_1+\epsilon, t_2-\epsilon)$. This means in particular that the function has compact support in $(0,T)$. Therefore equation \eqref{distributionalequation} becomes
\begin{equation*}
\int_{t_1 - \epsilon}^{t_1 + \epsilon} \int_{\mathbb{T}^3} \lvert v \rvert^2 \varphi_\epsilon (t - t_1) dx dt = \int_{t_2 - \epsilon}^{t_2 + \epsilon} \int_{\mathbb{T}^3} \lvert v \rvert^2 \varphi_\epsilon (t - t_2) dx dt.
\end{equation*}
Then we apply the Lebesgue differentiation theorem (cf., \cite{folland,zygmund}), by taking the limit $\epsilon \rightarrow 0$, which yields
\begin{equation*}
\int_{\mathbb{T}^3} \lvert v  (x,t_1) \rvert^2 dx = \int_{\mathbb{T}^3} \lvert v  (x,t_2) \rvert^2 dx
\end{equation*}
for almost every $t_1,t_2 \in (0,T)$.
Therefore we conclude that the $L^2$ norm $\lVert v(t, \cdot) \rVert_{L^2} $ is conserved for almost all $t \in (0,T)$. Thus we proved conservation of energy for weak solutions (i.e. the analogue of the first half of Onsager's conjecture for this model) under the assumptions of Proposition \ref{zerodefect}.
\end{proof}
\section{The Euler-$\alpha$ equations} \label{eulersection}
We now prove a similar result for the Euler-$\alpha$ equations, we recall that the equations are given by
\begin{align*}
&\partial_t v - \nu \Delta v + \nabla \cdot (u \otimes v) + \sum_{j=1}^3 v_j \nabla u_j + \nabla p = 0, \\
&v = u - \alpha^2 \Delta u, \quad \nabla \cdot u = \nabla \cdot v = 0.
\end{align*}
The definition of a weak solution is slightly different compared to the Leray-$\alpha$ model, since the regularity assumption is now $u \in L^\infty ((0,T) ; H^1 (\mathbb{T}^3))$ instead of $v \in L^\infty ((0,T); L^2 (\mathbb{T}^3))$.
\begin{definition} \label{euleralphaweaksolution}
A pair of functions $u \in L^\infty ((0,T); H^1 (\mathbb{T}^3))$ and $p \in L^\infty ((0,T); L^1 (\mathbb{T}^3))$ is called a weak solution of the Euler-$\alpha$ equations if it satisfies the equations (for all $\psi \in \mathcal{D} (\mathbb{T}^3 \times (0,T) ; \mathbb{R}^3)$ and $\chi \in \mathcal{D} (\mathbb{T}^3 \times (0,T) ; \mathbb{R})$)
\begingroup
\allowdisplaybreaks
\begin{align*}
&\int_0^T \int_{\mathbb{T}^3 } u \cdot \partial_t \psi dx dt + \alpha^2 \int_0^T \int_{\mathbb{T}^3 } \nabla u : \nabla \partial_t \psi dx dt + \int_0^T \int_{\mathbb{T}^3 } (u \otimes u) : \nabla \psi dx dt \\
&+ \alpha^2 \int_0^T \int_{\mathbb{T}^3 } (\partial_i u \otimes \partial_i u) : \nabla \psi dx dt + \alpha^2 \int_0^T \int_{\mathbb{T}^3 } (u \otimes \partial_i u) : \nabla \partial_i \psi dx dt  \\
&- \int_0^T \int_{\mathbb{T}^3 } u_j \partial_i u_j \psi_i dx dt - \alpha^2 \int_0^T \int_{\mathbb{T}^3 } \partial_k u_j \partial_i u_j \partial_k \psi_i dx dt + \frac{1}{2} \alpha^2 \int_0^T \int_{\mathbb{T}^3 } \partial_k u_j \partial_k u_j \partial_i \psi dx dt  \\
&+ \int_0^T \int_{\mathbb{T}^3 } p \nabla \cdot \psi dx dt = 0,  \\
&\int_0^T \int_{\mathbb{T}^3 } u_i \partial_i \chi dx dt = 0.
%&\int_0^T \int_{\mathbb{T}^3 } v_i \partial_i \chi dx dt = 0. \nonumber
\end{align*}
\endgroup
Once again, the pressure will be determined uniquely by assuming that
\begin{equation*}
\int_{\mathbb{T}^3} p(x,t) dx = 0.
\end{equation*}
\end{definition}
As before, it is possible to extend the space of test functions for these models, like was done in Lemma \ref{generaltestfunctions}.
\begin{remark}
We observe that for the Euler-$\alpha$ equations it is possible to write down the following distributional equation for the pressure (where $\psi \in \mathcal{D} (\mathbb{T}^3 \times (0,T); \mathbb{R})$)
\begin{align*}
&\langle  \Delta p , \psi \rangle + \langle (\nabla \otimes \nabla) : (u \otimes u) , \psi \rangle  + \alpha^2 \langle (\nabla \otimes \nabla) : (\partial_i u \otimes \partial_i u) , \psi \rangle - \alpha^2 \langle \partial_i (\nabla \otimes \nabla) : (u \otimes \partial_i u) , \psi \rangle \\
&+ \frac{1}{2} \langle \Delta (\lvert u \rvert^2), \psi \rangle - \alpha^2 \langle \partial_k \partial_i (\partial_k u_j \partial_i u_j), \psi \rangle + \frac{1}{2} \alpha^2 \langle \Delta (\partial_k u_j \partial_k u_j), \psi \rangle = 0.
\end{align*}
We can conclude from this equation that if $u \in L^3 ((0,T); W^{1,3} (\mathbb{T}^3))$ then by elliptic regularity theory it follows that $p \in L^{3/2} ((0,T); L^{3/2} (\mathbb{T}^3))$, this observation will be used later.

\end{remark}

As we did with the Leray-$\alpha$ model, we now enlarge the space of test functions in the following lemma.
\begin{lemma} \label{euleralphagentest}
The weak formulation of the Euler-$\alpha$ equations still holds if $\psi \in W^{1,1}_0 ((0,T); \linebreak H^1 (\mathbb{T}^3)) \cap L^1 ((0,T); H^3 (\mathbb{T}^3))$.
\end{lemma}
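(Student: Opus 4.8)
The plan is to mirror the proof of Lemma~\ref{generaltestfunctions}. Given $\psi$ in the stated space, one first produces a sequence $\psi_m\in\mathcal{D}(\mathbb{T}^3\times(0,T);\mathbb{R}^3)$ with
\[
\psi_m\longrightarrow\psi\quad\text{in}\quad W^{1,1}_0((0,T);H^1(\mathbb{T}^3))\cap L^1((0,T);H^3(\mathbb{T}^3)),
\]
and then passes to the limit in each of the nine terms of the momentum identity of Definition~\ref{euleralphaweaksolution} (and in the divergence identity). The approximating sequence is obtained in the standard way: mollify $\psi$ in space (which preserves the vanishing temporal traces and converges in every spatial Sobolev norm), and then, using that $W^{1,1}((0,T);H^1)\hookrightarrow C([0,T];H^1)$ so that $\|\psi(t)\|_{H^1}\to0$ as $t\to0^+$ and $t\to T^-$, multiply by a temporal cut-off and mollify in time. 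Every term of the weak formulation is multilinear with one factor built from the solution --- hence bounded in a fixed Lebesgue space uniformly in $t$ --- and the other factor a derivative of $\psi$, so it suffices to check that the relevant derivative of $\psi_m-\psi$ tends to $0$ in the Lebesgue space H\"older-dual to the solution factor. The embeddings $H^1(\mathbb{T}^3)\hookrightarrow L^6$, $H^2(\mathbb{T}^3)\hookrightarrow L^\infty$ and $H^3(\mathbb{T}^3)\hookrightarrow W^{1,\infty}$, together with $u\in L^\infty((0,T);H^1)$, $\nabla u\in L^\infty((0,T);L^2)$ and $p\in L^\infty((0,T);L^1)$, are the only tools needed.

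I would organise the nine terms into three groups. First, the two terms with a time derivative, $\int u\cdot\partial_t\psi$ and $\alpha^2\int\nabla u:\nabla\partial_t\psi$: here $u$ and $\nabla u$ lie in $L^\infty_tL^2_x$, and convergence of $\partial_t\psi_m$ to $\partial_t\psi$ in $L^1((0,T);H^1)$ --- which is exactly the $W^{1,1}_0((0,T);H^1)$ convergence --- handles both. Second, the terms in which $\psi$ or $\nabla\psi$ is paired against a solution factor already in $L^\infty_tL^1_x$ or $L^\infty_tL^{3/2}_x$, namely $\int(u\otimes u):\nabla\psi$, $\alpha^2\int(\partial_iu\otimes\partial_iu):\nabla\psi$, $-\alpha^2\int\partial_ku_j\partial_iu_j\,\partial_k\psi_i$, $\tfrac12\alpha^2\int|\nabla u|^2\,\nabla\cdot\psi$, $\int p\,\nabla\cdot\psi$, and $-\int u_j\partial_iu_j\,\psi_i$: these are dominated by $\|\nabla\psi_m-\nabla\psi\|_{L^1((0,T);L^\infty)}$ (respectively by $\|\psi_m-\psi\|_{L^1((0,T);L^\infty)}$ for the last, where $u_j\partial_iu_j\in L^\infty_tL^{3/2}_x$), both of which vanish since $\psi_m\to\psi$ in $L^1((0,T);H^3)\hookrightarrow L^1((0,T);W^{1,\infty})$. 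Third, the single term that genuinely carries two spatial derivatives on $\psi$, $\alpha^2\int(u\otimes\partial_iu):\nabla\partial_i\psi=\alpha^2\int u_j\,\partial_iu_k\,\partial_k\partial_i\psi_j$: by H\"older ($u\in L^\infty_tL^6_x$, $\nabla u\in L^\infty_tL^2_x$) one has $u_j\partial_iu_k\in L^\infty((0,T);L^{3/2}(\mathbb{T}^3))$, and this is paired against $\nabla^2\psi$, so one needs $\nabla^2(\psi_m-\psi)\to0$ in $L^1((0,T);L^3)$; this follows because $\nabla^2$ maps $H^3$ into $H^1\hookrightarrow L^6\hookrightarrow L^3$. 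The divergence identity $\int u_i\partial_i\chi=0$ extends analogously by pairing $u\in L^\infty_tL^2_x$ against $\nabla\chi_m\to\nabla\chi$ in $L^1_tL^2_x$. Summing the limits, each term evaluated at $\psi_m$ converges to the same term evaluated at $\psi$, and since the identity holds for every $\psi_m$ it holds for $\psi$.

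I do not expect a genuine obstacle: all estimates are elementary H\"older and Sobolev bounds, and no boundary-in-time contributions appear because $\partial_t\psi$ enters the weak formulation directly (there is no integration by parts in $t$) while the subscript $0$ in $W^{1,1}_0$ enforces vanishing traces at $t=0,T$. The only point demanding care is the bookkeeping --- matching, for each of the nine terms, the H\"older exponent of the solution factor with a Lebesgue exponent in which the corresponding derivative of $\psi_m-\psi$ actually converges. As the grouping above shows, the critical case is the term carrying two spatial derivatives on $\psi$ against the factor $u\otimes\nabla u\in L^\infty_tL^{3/2}_x$; it is precisely this pairing that dictates the hypothesis $L^1((0,T);H^3)$, while the $\alpha^2\nabla u:\nabla\partial_t\psi$ term dictates $W^{1,1}_0((0,T);H^1)$.
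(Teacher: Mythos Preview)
Your proposal is correct and follows precisely the density argument the paper intends: the paper's own proof consists of the single sentence ``The proof goes the same way as the proof of Lemma~\ref{generaltestfunctions}, by using a density argument.'' Your term-by-term bookkeeping is accurate, and in particular you correctly identify that the second-order term $\alpha^2\int(u\otimes\partial_i u):\nabla\partial_i\psi$ is what forces the $L^1((0,T);H^3)$ hypothesis, while the $\alpha^2\int\nabla u:\nabla\partial_t\psi$ term forces $W^{1,1}_0((0,T);H^1)$.
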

\begin{proof}
The proof goes the same way as the proof of Lemma \ref{generaltestfunctions}, by using a density argument.
\end{proof}
Now we will establish the equation of local energy balance.
\begin{theorem}[Equation of energy balance]
Let $u \in L^\infty ( (0,T); H^1 (\mathbb{T}^3))$ be a weak solution of the Euler-$\alpha$ equations such that $u \in L^3 ((0,T); W^{1,3} (\mathbb{T}^3))$. Then the following equation of local energy balance holds  in the sense of distributions,
\begin{align*}
& \partial_t ( \lvert u \rvert^2) + \alpha^2 \partial_t ( \lvert \nabla u \rvert^2) - 2 \alpha^2 \partial_t \partial_i \bigg( u_j \partial_i u_j  \bigg) + 2 \alpha^2\partial_i \bigg( \partial_t u_j \partial_i u_j \bigg) + D_{2} (u)  + \nabla \cdot ( \lvert u \rvert^2 u)  \\
&+ 2\alpha^2 \nabla \cdot ( \partial_k u_j u_j \partial_k u) + \frac{1}{2} \alpha^2 D_{3} (u) + 2 \alpha^2 \nabla \cdot (\partial_k u_j \partial_k u_j u) + 2 \alpha^2 \partial_i \partial_k (u_i \partial_k u_j u_j) = 0,
\end{align*}
 where the defect terms are given by
\begingroup
\allowdisplaybreaks
\begin{align*}
D_{2} (u) (x,t) &\coloneqq \frac{1}{2} \lim_{\epsilon \rightarrow 0} \int_{\mathbb{R}^3} \nabla_\xi \phi_\epsilon (\xi) \cdot \delta u(\xi;x,t) \lvert \delta u (\xi ;x,t) \lvert^2 d \xi,\\
D_{3} (u) (x,t) &\coloneqq \lim_{\epsilon \rightarrow 0} \int_{\mathbb{R}^3} \partial_i \phi_\epsilon (\xi) \delta u_i (\xi ; x,t) \delta \partial_k u_j (\xi ;x,t) \delta \partial_k u_j (\xi ;x,t)  d \xi,
\end{align*}
\endgroup
where the limits above also hold in the sense of distribution. Moreover, the defect terms, $D_2(u), D_3(u)$, are independent of the choice of mollifier.
\end{theorem}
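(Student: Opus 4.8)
The plan is to run the scheme of Theorem~\ref{energyequationtheorem} (the Leray-$\alpha$ case) for the Euler-$\alpha$ equations, the only structural change being that the correct multiplier is now $u$ rather than $v$: since $v=(I-\alpha^2\Del)u$ and $\Div u=0$, the pairing $\int_{\mathbb{T}^3}v\cdot u\dx$ reproduces the $H^1$ energy $\lVert u\rVert_{H^1}^2$ of \eqref{H1norm}, so testing against $u$ is what produces a local balance for the conserved quantity. First I would record that a weak solution satisfies the $v$-form of the equations
\[
\partial_t v+\Div(u\otimes v)+\textstyle\sum_j v_j\Grad u_j+\Grad p=0
\]
in $\mathcal{D}'$ (Definition~\ref{euleralphaweaksolution} is exactly its weak form after substituting $v=(I-\alpha^2\Del)u$ and integrating by parts, with all products interpreted via $u_i\Del u_j=\partial_k(u_i\partial_k u_j)-\partial_k u_i\partial_k u_j$); convolving with $\phi_\epsilon$ then gives a classical identity $\partial_t v^\epsilon+\Div(u\otimes v)^\epsilon+(\sum_j v_j\Grad u_j)^\epsilon+\Grad p^\epsilon=0$ on $\mathbb{T}^3\times(0,T)$, since $v^\epsilon=u^\epsilon-\alpha^2\Del u^\epsilon$ is smooth in space and the equation supplies time regularity as before. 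I would also invoke the preceding remark: $u\in L^3((0,T);W^{1,3})$ forces $p\in L^{3/2}((0,T);L^{3/2})$, which together with the hypothesis makes all products below integrable on $\mathbb{T}^3\times(0,T)$.

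Next, for a fixed scalar $\chi\in\mathcal{D}(\mathbb{T}^3\times(0,T))$ I would form two identities: (i) dot the mollified equation with $u\chi$ and integrate; (ii) use $\psi=\chi u^\epsilon$ as a test function in Definition~\ref{euleralphaweaksolution}, legitimate by Lemma~\ref{euleralphagentest} because $\chi u^\epsilon\in W^{1,1}_0((0,T);H^1(\mathbb{T}^3))\cap L^1((0,T);H^3(\mathbb{T}^3))$. Subtracting (ii) from (i), the time-derivative terms telescope exactly as in the Leray case: the $\chi\,\partial_t(\cdots)^\epsilon$ contributions cancel against the $\chi\,\partial_t(\cdots)$ contributions coming off the test function, and after the integrations by parts $-\alpha^2 u\cdot\Del\partial_t u^\epsilon=\alpha^2\Grad u:\Grad\partial_t u^\epsilon-\alpha^2\partial_i(u_j\partial_i\partial_t u_j^\epsilon)$ --- combined with the algebraic identity $v_j\Grad u_j=\tfrac12\Grad(|u|^2)-\alpha^2\partial_k(\partial_k u_j\Grad u_j)+\tfrac12\alpha^2\Grad(|\Grad u|^2)$ that redistributes the $\alpha^2$-part of the nonlinearity --- one is left, in the $\epsilon\to0$ limit, with precisely the first four terms $\partial_t(|u|^2)+\alpha^2\partial_t(|\Grad u|^2)-2\alpha^2\partial_t\partial_i(u_j\partial_i u_j)+2\alpha^2\partial_i(\partial_t u_j\partial_i u_j)$.

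For the nonlinearity I would introduce the mollified defect functionals $D_{2,\epsilon}(u)$, $D_{3,\epsilon}(u)$ (defined like $D_2,D_3$ but with $\phi_\epsilon$ in place of the limit) and verify the Duchon--Robert-type a.e.\ identities
\[
D_{2,\epsilon}(u)=-\tfrac12\partial_i(u_iu_ju_j)^\epsilon+\tfrac12 u_i\partial_i(u_ju_j)^\epsilon+u_j\partial_i(u_ju_i)^\epsilon-u_iu_j\partial_i u_j^\epsilon
\]
(the literal analogue of the Leray identity, with $v\mapsto u$) and
\[
D_{3,\epsilon}(u)=-\partial_i(u_i\partial_k u_j\partial_k u_j)^\epsilon+2\partial_k u_j\partial_i(u_i\partial_k u_j)^\epsilon+u_i\partial_i(\partial_k u_j\partial_k u_j)^\epsilon-2u_i\partial_k u_j\partial_i\partial_k u_j^\epsilon
\]
(using $\Div u=\Div u^\epsilon=0$ to drop the divergence-of-$u$ term). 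Grouping the remaining nonlinear pieces of (i)$-$(ii) via these identities rewrites them as $\langle D_{2,\epsilon}(u)+\tfrac12\alpha^2 D_{3,\epsilon}(u)+\Div(\cdots)+\partial_i\partial_k(\cdots),\chi\rangle$, the divergence parts being exactly the explicit fluxes in the statement. Then I would pass to the limit in $\mathcal{D}'$: every non-defect term converges, because $u\cdot u^\epsilon\to|u|^2$ and $\Grad u:\Grad u^\epsilon\to|\Grad u|^2$ (in $L^1_{t,x}$, and in $L^{3/2}_{t,x}$ under the hypothesis), the mollification commutators such as $(|u|^2u)^\epsilon-|u|^2u^\epsilon$ and $(\partial_k u_j\partial_k u_j\,u)^\epsilon-\partial_k u_j\partial_k u_j\,u^\epsilon$ tend to $0$ in $L^1((0,T);L^1(\mathbb{T}^3))$ by commuting convolution past multiplication by a fixed $L^\infty$, resp.\ $L^3$, factor, and the pressure terms converge since $p\in L^{3/2}_{t,x}$ pairs with $u,\Grad u\in L^3_{t,x}$. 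Hence $D_{2,\epsilon}(u)+\tfrac12\alpha^2D_{3,\epsilon}(u)$ has a distributional limit; one then checks $D_{2,\epsilon}$ and $D_{3,\epsilon}$ converge separately (the terms forming $D_{3,\epsilon}$ involve only quantities controlled by $u\in L^3 W^{1,3}$, so its limit exists, whence so does that of $D_{2,\epsilon}$), and mollifier-independence follows because the limiting balance expresses the defects through $u$ and $p$ alone.

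The main obstacle is the bookkeeping. The weak formulation of Definition~\ref{euleralphaweaksolution} carries eight groups of terms, several quadratic in $\Grad u$; after substituting $\psi=\chi u^\epsilon$ and expanding every product-rule derivative one must sort each resulting term into the right bin --- those telescoping into the four time-derivative terms, those assembling into $D_{2,\epsilon}$ and $\tfrac12\alpha^2 D_{3,\epsilon}$ (with all the cubic-in-$\Grad u$ leftovers cancelling among the various $\alpha^2$-nonlinearities), and those forming the explicit spatial fluxes --- while checking that nothing in the last two bins fails to be integrable or to converge under the relatively weak hypothesis $u\in L^3((0,T);W^{1,3})$ rather than, say, $u\in H^2$. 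The one genuinely non-routine point, as opposed to merely lengthy, is the separate convergence of $D_{2,\epsilon}$ and $D_{3,\epsilon}$ (not merely of the combination $D_{2,\epsilon}+\tfrac12\alpha^2D_{3,\epsilon}$ that the balance law directly delivers), which forces one to exploit the explicit structure of the commutator identities rather than the balance law alone.
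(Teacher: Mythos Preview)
Your overall strategy---mollify, test with $\chi u^\epsilon$ via Lemma~\ref{euleralphagentest}, subtract, introduce $D_{2,\epsilon}$ and $D_{3,\epsilon}$ through the Duchon--Robert identities, and pass to the limit---is exactly the paper's approach, and the bookkeeping you describe is correct in outline.

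The one point where you diverge from the paper, and where your argument has a gap, is the separate convergence of $D_{2,\epsilon}$ and $D_{3,\epsilon}$. You propose to show that $D_{3,\epsilon}$ converges first, arguing that ``the terms forming $D_{3,\epsilon}$ involve only quantities controlled by $u\in L^3 W^{1,3}$.'' This is not justified: the commutator identity for $D_{3,\epsilon}$ contains terms such as $u_i\,\partial_k u_j\,\partial_i\partial_k u_j^\epsilon$ and $\partial_k u_j\,\partial_i(u_i\partial_k u_j)^\epsilon$, which involve second spatial derivatives of $u^\epsilon$ (or of mollified products). Under $u\in L^3((0,T);W^{1,3})$ these are not individually controlled as $\epsilon\to 0$, and the integral definition of $D_{3,\epsilon}$ only yields the bound $\int_{\mathbb{T}^3}|\delta u|\,|\delta\nabla u|^2\,dx\le C|\xi|\,\lVert\nabla u\rVert_{L^3}^3$, i.e.\ $\sigma_3(|\xi|)$ is merely bounded rather than $o(1)$. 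Boundedness in $L^1$ is not convergence.

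The paper resolves this the other way round: it shows $D_{2,\epsilon}\to 0$ directly from its integral formula, using only the weak-solution regularity $u\in L^\infty((0,T);H^1)$ and the Besov embedding $H^1(\mathbb{T}^3)=B^1_{2,2}(\mathbb{T}^3)\subset B^{1/2}_{3,\infty}(\mathbb{T}^3)$, which gives $\int_{\mathbb{T}^3}|\delta u|^3\,dx\le C|\xi|^{3/2}\lVert u\rVert_{H^1}^3$ and hence $\sigma_2(|\xi|)=|\xi|^{1/2}\to 0$. Once $D_{2,\epsilon}\to 0$, the balance law forces $D_{3,\epsilon}$ to converge. This is both simpler and stronger (it uses no extra hypothesis on $u$ beyond the definition of weak solution), and it is the argument you should substitute at that step.
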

\begin{proof}
By mollifying the Euler-$\alpha$ equations we find that $\partial_t v^\epsilon, \partial_t u^\epsilon \in L^\infty ((0,T); C^\infty (\mathbb{T}^3))$. As a result, we can conclude that $u^\epsilon \in W^{1,\infty} ((0,T); C^\infty (\mathbb{T}^3))$. As a result, we are able to apply Lemma \ref{euleralphagentest} and take $u^\epsilon \chi$ as our test function. Mollifying our equation and multiplying it by $u \chi$ and subtracting it from the weak formulation gives us that
\begingroup
\allowdisplaybreaks
\begin{align*}
&\int_0^T \int_{\mathbb{T}^3 } u \partial_t (u^\epsilon \chi) dx dt - \int_0^T \int_{\mathbb{T}^3} u \chi \partial_t (u^\epsilon) dx dt + \alpha^2 \int_0^T \int_{\mathbb{T}^3 } \nabla u : \nabla \partial_t (u^\epsilon \chi) dx dt  \\
&- \alpha^2 \int_0^T \int_{\mathbb{T}^3} \nabla (u \chi) :  \partial_t \nabla u^\epsilon dx dt +\int_0^T \int_{\mathbb{T}^3 } (u \otimes u) : \nabla (u^\epsilon \chi) dx dt - \int_0^T \int_{\mathbb{T}^3 }  \chi u \cdot  (\nabla \cdot (u \otimes u)^\epsilon) dx dt  \\
&+ \alpha^2 \int_0^T \int_{\mathbb{T}^3} (\partial_k u \otimes \partial_k u) : \nabla (u^\epsilon \chi) dx dt - \alpha^2 \int_0^T \int_{\mathbb{T}^3} u \chi \cdot (\nabla \cdot (\partial_k u \otimes \partial_k u)^\epsilon) dx dt \\
&+ \alpha^2 \int_0^T \int_{\mathbb{T}^3 } (u \otimes \partial_k u) : \nabla \partial_k (u^\epsilon \chi) dx dt - \alpha^2 \int_0^T \int_{\mathbb{T}^3} \partial_k (u \chi) \cdot (\nabla \cdot (u \otimes \partial_k u)^\epsilon) dx dt \\
&- \int_0^T \int_{\mathbb{T}^3 } u_j \partial_i u_j (u^\epsilon_i \chi) dx dt - \int_0^T \int_{\mathbb{T}^3} u_i \chi (u_j \partial_i u_j)^\epsilon dx dt \\
&- \alpha^2 \int_0^T \int_{\mathbb{T}^3 } \partial_k u_j \partial_i u_j \partial_k (u^\epsilon_i \chi) dx dt + \alpha^2 \int_0^T \int_{\mathbb{T}^3} u_i \chi \partial_k (\partial_k u_j \partial_i u_j)^\epsilon dx dt \\
&+ \frac{1}{2} \alpha^2 \int_0^T \int_{\mathbb{T}^3 } \partial_k u_j \partial_k u_j \partial_i (u^\epsilon_i \chi) dx dt + \frac{1}{2} \alpha^2 \int_0^T \int_{\mathbb{T}^3} u_i (\partial_k u_j \partial_k u_j)^\epsilon \partial_i \chi dx dt  \\
&+ \int_0^T \int_{\mathbb{T}^3} p \nabla \cdot (u^\epsilon \chi) dx dt - \int_0^T \int_{\mathbb{T}^3} u \chi \cdot \nabla p^\epsilon dx dt = 0.
\end{align*}
\endgroup
We first deal with the time derivative terms, we get that
\begingroup
\allowdisplaybreaks
\begin{align*}
&\int_0^T \int_{\mathbb{T}^3 } u \partial_t (u^\epsilon \chi) dx dt - \int_0^T \int_{\mathbb{T}^3} u \chi \partial_t (u^\epsilon) dx dt + \alpha^2 \int_0^T \int_{\mathbb{T}^3 } \nabla u : \nabla \partial_t (u^\epsilon \chi) dx dt  \\
&- \alpha^2 \int_0^T \int_{\mathbb{T}^3} \nabla (u \chi) :  \partial_t \nabla u^\epsilon dx dt = \int_0^T \int_{\mathbb{T}^3} \bigg[ (u \cdot u^\epsilon) \partial_t \chi + \alpha^2 \nabla u : \nabla u^\epsilon \partial_t \chi \bigg] dx dt \\
&+ \alpha^2 \int_0^T \int_{\mathbb{T}^3} \bigg[ \nabla u : \nabla \chi \otimes \partial_t u^\epsilon + \nabla u : \partial_t \nabla \chi \otimes u^\epsilon - \nabla \chi \otimes u : \partial_t \nabla u^\epsilon \bigg] dx dt \\
&= \int_0^T \int_{\mathbb{T}^3} \bigg[ (u \cdot u^\epsilon) \partial_t \chi + \alpha^2 \nabla u : \nabla u^\epsilon \partial_t \chi \bigg] dx dt \\
&+ \alpha^2 \int_0^T \int_{\mathbb{T}^3} \bigg[ \nabla u : \nabla \chi \otimes \partial_t u^\epsilon + \nabla u : \partial_t \nabla \chi \otimes u^\epsilon + \partial_t \nabla \chi \otimes u : \nabla u^\epsilon + \nabla \chi \otimes \partial_t  u :\nabla u^\epsilon \bigg] dx dt
\end{align*}
\endgroup
This can be rewritten as
\begin{equation*}
- \bigg\langle \partial_t ( u \cdot u^\epsilon) + \alpha^2 \partial_t (\nabla u : \nabla u^\epsilon) - \alpha^2 \partial_t \partial_i \bigg( u_j^\epsilon \partial_i u_j + u_j \partial_i u_j^\epsilon \bigg) + \alpha^2\partial_i \bigg( \partial_t u_j^\epsilon \partial_i u_j + \partial_t u_j \partial_i u_j^\epsilon \bigg) , \chi \bigg\rangle.
\end{equation*}

Later we will choose $\chi$ such that it only depends on time, for which we will have
\[
\bigg\langle -\alpha^2 \partial_t \partial_i \bigg( u_j^\epsilon \partial_i u_j + u_j \partial_i u_j^\epsilon \bigg) + \alpha^2\partial_i \bigg( \partial_t u_j^\epsilon \partial_i u_j + \partial_t u_j \partial_i u_j^\epsilon \bigg), \chi \bigg\rangle =0.
\]

We introduce a defect term given by
\begin{equation*}
D_{2,\epsilon} (u) (x,t) \coloneqq \frac{1}{2} \int_{\mathbb{R}^3} \nabla_\xi \phi_\epsilon (\xi) \cdot \delta u(\xi;x,t) \lvert \delta u (\xi ;x,t) \lvert^2 d \xi,
\end{equation*}
then by similar reasoning to the case of the Leray-$\alpha$ model we find that
\begin{align*}
&\int_0^T \int_{\mathbb{T}^3 } (u \otimes u) : \nabla (u^\epsilon \chi) dx dt - \int_0^T \int_{\mathbb{T}^3 }  \chi u \cdot  (\nabla \cdot (u \otimes u)^\epsilon) dx dt \\
&= \int_0^T \int_{\mathbb{T}^3} \bigg[-\chi D_{2,\epsilon} (u) + \frac{1}{2}\bigg(( u_j u_j)^\epsilon u_i  -(u_i u_j u_j)^\epsilon \bigg) \partial_i \chi   + u_j u_i u_j^\epsilon \partial_i \chi  \bigg] dx dt \\
&= -\bigg\langle D_{2,\epsilon} (u) + \frac{1}{2} \nabla \cdot ( (\lvert u \rvert^2 u)^\epsilon - (\lvert u \rvert^2)^\epsilon u ) + \nabla \cdot ( (u \cdot u^\epsilon) u) , \chi \bigg\rangle.
\end{align*}
We have the terms
\begingroup
\allowdisplaybreaks
\begin{align*}
&\alpha^2 \int_0^T \int_{\mathbb{T}^3} \partial_k u_i  \partial_k u_j  \partial_i (u^\epsilon_j \chi) dx dt - \alpha^2 \int_0^T \int_{\mathbb{T}^3} u_j \chi \partial_i (\partial_k u_j  \partial_k u_i)^\epsilon dx dt \\
&+ \alpha^2 \int_0^T \int_{\mathbb{T}^3 } u_i  \partial_k u_j \partial_i \partial_k (u^\epsilon_j \chi) dx dt - \alpha^2 \int_0^T \int_{\mathbb{T}^3} \partial_k (u_j \chi) \partial_i (u_i  \partial_k u_j)^\epsilon dx dt \\
&- \alpha^2 \int_0^T \int_{\mathbb{T}^3 } \partial_k u_j \partial_i u_j \partial_k (u^\epsilon_i \chi) dx dt + \alpha^2 \int_0^T \int_{\mathbb{T}^3}  u_i \chi \partial_k (\partial_k u_j \partial_i u_j)^\epsilon dx dt \\
&= \alpha^2 \int_0^T \int_{\mathbb{T}^3 } \chi \bigg[ \partial_k u_i \partial_k u_j \partial_i u_j^\epsilon - \partial_k u_j \partial_i u_j \partial_k u_i^\epsilon \bigg] dx dt + \alpha^2 \int_0^T \int_{\mathbb{T}^3} \bigg[ \partial_k u_i \partial_k u_j u_j^\epsilon \partial_i \chi \\
&- \partial_k u_j \partial_i u_j u_i^\epsilon \partial_k \chi \bigg] dx dt + \alpha^2 \int_0^T \int_{\mathbb{T}^3} \chi \bigg[ \partial_i u_j (\partial_k u_j \partial_k u_i)^\epsilon - \partial_k u_i (\partial_k u_j \partial_i u_j)^\epsilon \bigg] dx dt \\
&+ \alpha^2 \int_0^T \int_{\mathbb{T}^3} \bigg[ u_j \partial_i \chi (\partial_k u_j \partial_k u_i)^\epsilon - u_i \partial_k \chi (\partial_k u_j \partial_i u_j)^\epsilon \bigg] dx dt \\
&+ \alpha^2 \int_0^T \int_{\mathbb{T}^3} \chi \bigg[ - \frac{1}{2} D_{3,\epsilon} (u)  - \frac{1}{2} \partial_i (u_i \partial_k u_j \partial_k u_j )^\epsilon + \frac{1}{2} u_i \partial_i (\partial_k u_j \partial_k u_j)^\epsilon \bigg]dx dt \\
&+ \alpha^2 \int_0^T \int_{\mathbb{T}^3} \bigg[ u_i  \partial_k u_j \partial_i \chi \partial_k u^\epsilon_j  + u_i  \partial_k u_j \partial_i u^\epsilon_j \partial_k \chi + u_i  \partial_k u_j u^\epsilon_j \partial_i \partial_k  \chi + \partial_k \partial_i \chi u_j  (u_i \partial_k u_j)^\epsilon \\
&+ \partial_k \chi \partial_i u_j  (u_i \partial_k u_j)^\epsilon\bigg] dx dt.
\end{align*}
\endgroup
The defect term that was used above is given by
\begingroup
\allowdisplaybreaks
\begin{align*}
D_{3,\epsilon} (x,t) = \int_{\mathbb{R}^3} &\big [ \partial_i \phi_\epsilon (\xi) \delta u_i (\xi ; x,t) \delta \partial_k u_j (\xi ;x,t) \delta \partial_k u_j (\xi ;x,t)  d \xi = - \partial_i (u_i \partial_k u_j \partial_k u_j)^\epsilon \\
&+ u_i \partial_i (\partial_k u_j \partial_k u_j)^\epsilon + 2 \partial_k u_j \partial_i (u_i \partial_k u_j)^\epsilon - 2u_i \partial_k u_j \partial_i \partial_k u_j^\epsilon \big ] d\xi .
\end{align*}
\endgroup
Now we can write that
\begin{align*}
&- \int_0^T \int_{\mathbb{T}^3 } u_j \partial_i u_j (u^\epsilon_i \chi) dx dt - \int_0^T \int_{\mathbb{T}^3} u_i \chi (u_j \partial_i u_j)^\epsilon dx dt \\
&= - \frac{1}{2} \int_0^T \int_{\mathbb{T}^3} \chi \partial_i \big[ u_i^\epsilon u_j  u_j \big] dx dt - \frac{1}{2} \int_0^T \int_{\mathbb{T}^3} \chi \partial_i \big[ u_i (u_j  u_j)^\epsilon \big] dx dt \\
&= \frac{1}{2} \bigg\langle \nabla \cdot \big( \lvert u \rvert^2 u^\epsilon + (u \cdot u)^\epsilon u \big), \chi \bigg\rangle.
\end{align*}
The next part we can write as
\begingroup
\allowdisplaybreaks
\begin{align*}
& \frac{1}{2} \alpha^2 \int_0^T \int_{\mathbb{T}^3 } \partial_k u_j \partial_k u_j \partial_i (u^\epsilon_i \chi) dx dt + \frac{1}{2} \alpha^2 \int_0^T \int_{\mathbb{T}^3} u_i (\partial_k u_j \partial_k u_j)^\epsilon \partial_i \chi dx dt \\
&= - \frac{1}{2} \alpha^2 \bigg\langle \nabla \cdot \big[ \partial_k u_j \partial_k u_j u^\epsilon + ( \partial_k u_j \partial_k u_j )^\epsilon u \big], \chi \bigg\rangle.
\end{align*}
\endgroup
The contribution of the pressure terms can subsequently be rewritten as
\begingroup
\allowdisplaybreaks
\begin{align*}
&\int_0^T \int_{\mathbb{T}^3} p \nabla \cdot (u^\epsilon \chi) dx dt - \int_0^T \int_{\mathbb{T}^3} u \chi \cdot \nabla p^\epsilon dx dt = \int_0^T \int_{\mathbb{T}^3} [p u^\epsilon + u p^\epsilon] \cdot \nabla \chi dx dt \\
&= - \bigg\langle \nabla \cdot (p u^\epsilon + p^\epsilon u), \chi \bigg\rangle.
\end{align*}
\endgroup
Combining all these derivations, we obtain the following equation
\begingroup
\allowdisplaybreaks
\begin{align*}
&\bigg\langle \partial_t ( u \cdot u^\epsilon) + \alpha^2 \partial_t (\nabla u : \nabla u^\epsilon) - \alpha^2 \partial_t \partial_i \bigg( u_j^\epsilon \partial_i u_j + u_j \partial_i u_j^\epsilon \bigg) + \alpha^2\partial_i \bigg( \partial_t u_j^\epsilon \partial_i u_j + \partial_t u_j \partial_i u_j^\epsilon \bigg) \\
&+ D_{2,\epsilon} (u) + \nabla \cdot ( (\lvert u \rvert^2 u)^\epsilon - (\lvert u \rvert^2)^\epsilon u ) + \nabla \cdot ( (u \cdot u^\epsilon) u) - \alpha^2 \partial_k u_i \partial_k u_j \partial_i u_j^\epsilon + \alpha^2 \partial_k u_j \partial_i u_j \partial_k u_i^\epsilon \\
&+ \alpha^2 \nabla \cdot ( \partial_k u_j u_j^\epsilon \partial_k u) - \alpha^2 \partial_k (\partial_k u_j \partial_i u_j u_i^\epsilon) - \alpha^2 \partial_i u_j (\partial_k u_j \partial_k u_i)^\epsilon + \alpha^2 \partial_k u_i (\partial_k u_j \partial_i u_j)^\epsilon\\
&+\alpha^2 \nabla \cdot ( u_j (\partial_k u_j \partial_k u)^\epsilon) - \alpha^2 \partial_k (u_i (\partial_k u_j \partial_i u_j)^\epsilon ) + \frac{1}{2} \alpha^2 D_{3,\epsilon} (u)  + \frac{1}{2} \alpha^2 \partial_i (u_i \partial_k u_j \partial_k u_j )^\epsilon \\
&- \frac{1}{2} \alpha^2 \partial_i (u_i (\partial_k u_j \partial_k u_j)^\epsilon ) + \alpha^2 \nabla \cdot (\partial_k u_j \partial_k u_j^\epsilon u) + \alpha^2 \partial_k (u_i \partial_k u_j \partial_i u_j^\epsilon) + \alpha^2 \partial_i \partial_k (u_i \partial_k u_j u_j^\epsilon)\\
&+ \alpha^2 \partial_k \partial_i ( u_j (u_i \partial_k u_j)^\epsilon) + \alpha^2 \partial_k ( \partial_i u_j (u_i \partial_k u_j)^\epsilon), \chi \bigg\rangle = 0.
\end{align*}
\endgroup
We observe that $D_{2,\epsilon} (u) $ and $D_{3,\epsilon} (u)$ are well-defined for any $\epsilon > 0$. It is now possible to  express the sum $\big[D_{2,\epsilon} (u) + \frac{1}{2} \alpha^2 D_{3,\epsilon} (u)\big]$ using the other terms in the above equation. Since the limit of all the other terms in the above equation is a well-defined as distributional  limit and is independent of the choice of mollifier, we conclude that the limit $\lim_{\epsilon \rightarrow 0} \big[ D_{2,\epsilon} (u) + \frac{1}{2} \alpha^2 D_{3,\epsilon} (u) \big]$ exists as a distribution and will be denoted by $\big[ D_2 (u) + \frac{1}{2} \alpha^2 D_3(u) \big]$. Then by taking the limit we obtain the following equation of local energy balance
\begingroup
\allowdisplaybreaks
\begin{align*}
&\bigg\langle \partial_t ( \lvert u \rvert^2) + \alpha^2 \partial_t ( \lvert \nabla u \rvert^2) - 2 \alpha^2 \partial_t \partial_i \bigg( u_j \partial_i u_j  \bigg) + 2 \alpha^2\partial_i \bigg( \partial_t u_j \partial_i u_j \bigg) + \big[ D_{2} (u) + \frac{1}{2} \alpha^2 D_{3} (u) \big]\\
&+ \nabla \cdot ( \lvert u \rvert^2 u)+ 2\alpha^2 \nabla \cdot ( \partial_k u_j u_j \partial_k u) + 2 \alpha^2 \nabla \cdot (\partial_k u_j \partial_k u_j u) + 2 \alpha^2 \partial_i \partial_k (u_i \partial_k u_j u_j), \chi \bigg\rangle = 0.
\end{align*}
\endgroup
In fact, we will prove, in what follows, that $D_2 (u) :=\lim_{\epsilon \rightarrow 0}  D_{2,\epsilon} (u)= 0$ for all weak solutions of the Euler-$\alpha$ equations, without any additional regularity assumption on $u$. This implies that $D_3(u):=\lim_{\epsilon \rightarrow 0} D_{3,\epsilon}$ exists as a distribution. For this reason we used a separate notation for  the terms $D_2$ and $D_3$  in the statement of the theorem.
\end{proof}
Now we state a sufficient condition for the defect terms, $D_2(u),D_3(u)$, to be zero.
\begin{proposition} \label{zerodefecteuler}
Assume that $u$ is a weak solution of the Euler-$\alpha$ equations such that it satisfies the following bounds
\begin{align*}
&\int_{\mathbb{T}^3} \lvert \delta u (\xi;x,t) \rvert^3 dx \leq C(t) \lvert \xi \rvert \sigma_2 (\lvert \xi \rvert), \\
&\int_{\mathbb{T}^3} \lvert \delta u (\xi;x,t) \rvert \lvert \delta \nabla u (\xi;x,t) \rvert^2 dx \leq C(t) \lvert \xi \rvert \sigma_3 (\lvert \xi \rvert),
\end{align*}
where $C \in L^1 (0,T)$ and $\sigma_k \in L^\infty_{\mathrm{loc}} (\mathbb{R})$ has the property that $\sigma_k (\lvert \xi \rvert) \rightarrow 0$ as $\lvert \xi \rvert \rightarrow 0$,  for $k=2,3$. Then $D_2 (u) = D_3 (u) = 0$.
\end{proposition}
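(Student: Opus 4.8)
The strategy is the same as in the proof of Proposition~\ref{zerodefect}: bound the $L^1((0,T)\times\mathbb{T}^3)$ norm of each approximate defect term by the hypothesised moduli of continuity, rescale the mollifier via the substitution $\xi=\epsilon z$, and pass to the limit $\epsilon\to 0^+$ using the dominated convergence theorem. Since the energy balance theorem above only produced the combined limit $D_2(u)+\tfrac12\alpha^2 D_3(u)$, I first note that under the present assumptions the quantities $D_{2,\epsilon}(u)$ and $D_{3,\epsilon}(u)$ are individually well defined as elements of $L^1((0,T)\times\mathbb{T}^3)$ for every $\epsilon>0$ — the $\xi$-integral is absolutely convergent because $\lvert\xi\rvert\,\sigma_k(\lvert\xi\rvert)$ is integrable over the bounded support of $\phi_\epsilon$ — so splitting the limit is legitimate.

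For $D_{2,\epsilon}(u)$ one estimates, using $\lvert\delta u\rvert\,\lvert\delta u\rvert^2=\lvert\delta u\rvert^3$ and interchanging the order of integration (Tonelli, legitimate since $u(\cdot,t)\in H^1(\mathbb{T}^3)\subset L^3(\mathbb{T}^3)$ uniformly in $t$, so the triple integrand is nonnegative and summable),
\begin{align*}
\int_0^T\!\!\int_{\mathbb{T}^3}\lvert D_{2,\epsilon}(u)(x,t)\rvert\,dx\,dt
&\le \tfrac12\int_0^T\!\!\int_{\mathbb{R}^3}\lvert\nabla_\xi\phi_\epsilon(\xi)\rvert\Big(\int_{\mathbb{T}^3}\lvert\delta u\rvert^3\,dx\Big)\,d\xi\,dt \\
&\le \tfrac12\Big(\int_0^T C(t)\,dt\Big)\int_{\mathbb{R}^3}\lvert\nabla_\xi\phi_\epsilon(\xi)\rvert\,\lvert\xi\rvert\,\sigma_2(\lvert\xi\rvert)\,d\xi .
\end{align*}
The change of variable $\xi=\epsilon z$ converts the last integral into $\int_{\mathbb{R}^3}\lvert\nabla\phi(z)\rvert\,\lvert z\rvert\,\sigma_2(\epsilon\lvert z\rvert)\,dz$, whose integrand is dominated, uniformly in $\epsilon\in(0,1]$, by $\lVert\sigma_2\rVert_{L^\infty(\mathrm{supp}\,\phi)}\,\lvert\nabla\phi(z)\rvert\,\lvert z\rvert\in L^1(\mathbb{R}^3)$ and converges to $0$ pointwise as $\epsilon\to 0^+$. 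Hence $D_{2,\epsilon}(u)\to 0$ in $L^1((0,T)\times\mathbb{T}^3)$, so its distributional limit vanishes: $D_2(u)=0$.

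The term $D_{3,\epsilon}(u)$ is treated in exactly the same way. The only modification is that the pointwise bound $\lvert\delta u_i\,\delta\partial_k u_j\,\delta\partial_k u_j\rvert\le\lvert\delta u\rvert\,\lvert\delta\nabla u\rvert^2$ replaces $\lvert\delta u\rvert^3$, and the second hypothesised inequality provides $\int_{\mathbb{T}^3}\lvert\delta u\rvert\,\lvert\delta\nabla u\rvert^2\,dx\le C(t)\,\lvert\xi\rvert\,\sigma_3(\lvert\xi\rvert)$ with $C\in L^1(0,T)$; the rescaling $\xi=\epsilon z$ and dominated convergence (now with $\sigma_3$ in place of $\sigma_2$) again yield $D_{3,\epsilon}(u)\to 0$ in $L^1((0,T)\times\mathbb{T}^3)$, and therefore $D_3(u)=0$.

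I do not expect a genuine obstacle here, as this is essentially a transcription of Proposition~\ref{zerodefect}. The two points that need attention are: (i) the Fubini--Tonelli interchange for $D_{3,\epsilon}(u)$, where one relies precisely on the $L^1$-in-time hypothesis on $C$ and the local boundedness of $\sigma_3$ rather than on any regularity of $u$ beyond $u\in L^\infty((0,T);H^1(\mathbb{T}^3))$; and (ii) keeping track of the scaling $\lvert\nabla_\xi\phi_\epsilon(\xi)\rvert\,d\xi=\epsilon^{-1}\lvert\nabla\phi(z)\rvert\,dz$ under $\xi=\epsilon z$, so that the factor $\lvert\xi\rvert=\epsilon\lvert z\rvert$ cancels the $\epsilon^{-1}$ exactly, leaving only the vanishing factor $\sigma_k(\epsilon\lvert z\rvert)$, which is what forces the limit to be $0$.
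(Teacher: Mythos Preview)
Your proposal is correct and follows exactly the approach the paper takes: the paper's proof of this proposition consists of the single sentence ``The proof is completely analogous to the proof of Proposition~\ref{zerodefect},'' and you have faithfully written out that analogy, including the change of variables $\xi=\epsilon z$ and the dominated convergence argument. Your additional remarks on the individual well-definedness of $D_{2,\epsilon}$ and $D_{3,\epsilon}$ in $L^1$ (justifying the split of the combined limit) and on the Fubini--Tonelli interchange are correct and in fact supply more detail than the paper itself provides.
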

\begin{proof}
The proof is completely analogous to the proof of Proposition \ref{zerodefect}.
\end{proof}
Now we state the regularity assumption that ensures that the sufficient conditions of Proposition \ref{zerodefecteuler} are satisfied.
\begin{proposition} \label{suffcondeuler}
Let $u$ be a weak solution of the Euler-$\alpha$ equations such that $u \in L^3 ((0,T); \linebreak B^s_{3,\infty} (\mathbb{T}^3))$ with $s > 1$. Then $D_2 (u) = D_3 (u) = 0$. In particular, the weak solution conserves energy.
\end{proposition}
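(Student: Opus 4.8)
The plan is to verify the two pointwise bounds required in Proposition \ref{zerodefecteuler} and then to deduce conservation of the $H^1$ energy by the same device used for the Leray-$\alpha$ model in Theorem \ref{conservationtheorem}. As preliminaries I would first record two Besov facts valid for $s>1$: the embedding $B^s_{3,\infty}(\mathbb{T}^3)\hookrightarrow W^{1,3}(\mathbb{T}^3)$ (so that $u\in L^3((0,T);W^{1,3})$ and the local energy balance theorem proved above genuinely applies, together with the associated pressure regularity $p\in L^{3/2}((0,T);L^{3/2})$), and the mapping property $\nabla u\in B^{s-1}_{3,\infty}$ with $\lVert\nabla u\rVert_{B^{s-1}_{3,\infty}}\le C\lVert u\rVert_{B^s_{3,\infty}}$.

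For the first bound, apply the difference inequality \eqref{besovinequality} with $p=3$, $\theta=s$ to get $\int_{\mathbb{T}^3}\lvert\delta u\rvert^3\,dx=\lVert\delta u\rVert_{L^3}^3\le C\lvert\xi\rvert^{3s}\lVert u(\cdot,t)\rVert_{B^s_{3,\infty}}^3$; since $3s-1>0$, writing $\lvert\xi\rvert^{3s}=\lvert\xi\rvert\cdot\lvert\xi\rvert^{3s-1}$ and taking $C(t)=C\lVert u(\cdot,t)\rVert_{B^s_{3,\infty}}^3\in L^1(0,T)$, $\sigma_2(\lvert\xi\rvert)=\lvert\xi\rvert^{3s-1}\to 0$ furnishes the first hypothesis of Proposition \ref{zerodefecteuler}. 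For the second bound, Hölder's inequality with exponents $(3,3,3)$ and \eqref{besovinequality} applied to $u$ (exponent $s$) and to $\nabla u$ (exponent $s-1$) give
\[
\int_{\mathbb{T}^3}\lvert\delta u\rvert\,\lvert\delta\nabla u\rvert^2\,dx\le\lVert\delta u\rVert_{L^3}\,\lVert\delta\nabla u\rVert_{L^3}^2\le C\lvert\xi\rvert^{s+2(s-1)}\lVert u\rVert_{B^s_{3,\infty}}^3=C\lvert\xi\rvert^{3s-2}\lVert u(\cdot,t)\rVert_{B^s_{3,\infty}}^3.
\]
Here the hypothesis $s>1$ is exactly what is needed: it forces $3s-2>1$, so $\lvert\xi\rvert^{3s-2}=\lvert\xi\rvert\cdot\lvert\xi\rvert^{3s-3}$ with $\sigma_3(\lvert\xi\rvert)=\lvert\xi\rvert^{3s-3}\to 0$, and again $C(t)=C\lVert u(\cdot,t)\rVert_{B^s_{3,\infty}}^3\in L^1(0,T)$. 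Proposition \ref{zerodefecteuler} then yields $D_2(u)=D_3(u)=0$.

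With $D_2=D_3=0$, the local energy balance identity reduces to $\partial_t(\lvert u\rvert^2)+\alpha^2\partial_t(\lvert\nabla u\rvert^2)$ plus terms that are all either spatial divergences or of the form $\partial_t\partial_i(\cdot)$, $\partial_i(\cdot)$, $\partial_i\partial_k(\cdot)$. Testing against $\psi=\psi(t)$ depending on time only — built, as in the proof of Theorem \ref{conservationtheorem}, from mollified indicator functions of an interval $[t_1,t_2]$ — kills all the spatial-derivative terms upon integration over $\mathbb{T}^3$, leaving $\frac{d}{dt}\big(\lVert u(\cdot,t)\rVert_{L^2}^2+\alpha^2\lVert\nabla u(\cdot,t)\rVert_{L^2}^2\big)=0$ distributionally; the Lebesgue differentiation theorem then gives $\lVert u(\cdot,t_1)\rVert_{H^1}=\lVert u(\cdot,t_2)\rVert_{H^1}$ for a.e. $t_1,t_2$. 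The only genuinely delicate point is the exponent bookkeeping: one must track powers of $\lvert\xi\rvert$ to see that the term carrying two difference-quotients of $\nabla u$ is precisely what pins the threshold at $s>1$, and one must confirm $B^s_{3,\infty}\hookrightarrow W^{1,3}$ so that the energy balance theorem is legitimately invoked; everything else is a direct application of Proposition \ref{zerodefecteuler} and the scheme of Theorem \ref{conservationtheorem}.
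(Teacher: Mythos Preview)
Your proof is correct and follows the same overall strategy as the paper: verify the two increment bounds of Proposition \ref{zerodefecteuler} via \eqref{besovinequality} and H\"older, then invoke the test-function argument of Theorem \ref{conservationtheorem}. There is one noteworthy difference in execution. For the first bound the paper does \emph{not} use the hypothesis $u\in B^s_{3,\infty}$ at all; instead it appeals to the Besov embedding $H^1(\mathbb{T}^3)=B^1_{2,2}(\mathbb{T}^3)\subset B^{1/2}_{3,\infty}(\mathbb{T}^3)$ to obtain $\int_{\mathbb{T}^3}|\delta u|^3\,dx\le |\xi|^{3/2}\lVert u\rVert_{H^1}^3$, thereby showing $D_2(u)=0$ for \emph{every} weak solution. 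This is what legitimises treating $D_2$ and $D_3$ as separate distributions in the energy balance theorem, a point the paper stresses. Your route through $|\xi|^{3s}$ is perfectly adequate for the proposition as stated, but misses this structural observation. For the second bound the paper records the exponent $2s-1$ (using $|\xi|^1$ on $\delta u$ and $|\xi|^{s-1}$ on each $\delta\nabla u$) rather than your $3s-2$; both give the same threshold $s>1$, so this is cosmetic. Your preliminary remark that $B^s_{3,\infty}\hookrightarrow W^{1,3}$ for $s>1$, needed to invoke the local energy balance, is a point the paper leaves implicit.
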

\begin{proof}
We need to verify that the assumptions of Proposition \ref{zerodefecteuler} are satisfied. We first observe that by the Besov embedding $H^1 (\mathbb{T}^3) = B^1_{2,2} (\mathbb{T}^3) \subset B^{1/2}_{3,\infty} (\mathbb{T}^3)$ (see, e.g., \cite{leoni} for futher details). Therefore, by the above and \eqref{besovinequality} we have
\begin{equation*}
\int_{\mathbb{T}^3} \lvert \delta u (\xi;x,t) \rvert^3 dx \leq \lvert \xi \rvert^{3/2} \lVert u \rVert_{H^1 }^3.
\end{equation*}
We can then take $\sigma_2 (\lvert \xi \rvert) = \lvert \xi \rvert^{1/2}$, which indeed converges to zero as $\lvert \xi \rvert \to 0$. We therefore conclude that $D_2 (u) = 0$ for all weak solutions of the Euler-$\alpha$ equations, without any additional regularity assumption on $u$. The second assumption of Proposition \ref{zerodefecteuler} is also satisfied since
\begin{equation*}
\int_{\mathbb{T}^3} \lvert \delta u (\xi;x,t) \rvert \lvert \delta \nabla u (\xi;x,t) \rvert^2 dx \leq \lvert \xi \rvert^{2s - 1} \lVert u \rVert_{B^s_{3,\infty}}^3,
\end{equation*}
where we have applied inequality \eqref{besovinequality}. Since $s > 1$, then $\sigma_3 (\lvert \xi \rvert) = \lvert \xi \rvert^{2s-2}$ tends to zero as $\lvert \xi \rvert \to 0$.
\end{proof}
Finally, we prove the conservation of energy.
\begin{theorem}
Let $u$ be a weak solution of the Euler-$\alpha$ equations with $u\in L^3 ((0,T); B^s_{3,\infty} (\mathbb{T}^3))$ with $s > 1$. Then $u$ conserves energy, which means that
\begin{equation*}
\lVert u(\cdot, t_1) \rVert_{H^1} = \lVert u(\cdot, t_2) \rVert_{H^2},
\end{equation*}
for almost every $t_1, t_2 \in (0,T)$.
\end{theorem}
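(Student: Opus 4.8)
The plan is to follow verbatim the argument used for the Leray-$\alpha$ model in Theorem \ref{conservationtheorem}, now applied to the equation of local energy balance for the Euler-$\alpha$ equations derived above. First, since $u \in L^3((0,T);B^s_{3,\infty}(\mathbb{T}^3))$ with $s>1$, Proposition \ref{suffcondeuler} gives $D_2(u)=D_3(u)=0$, so the local energy balance collapses to
\begin{align*}
&\partial_t(|u|^2) + \alpha^2\partial_t(|\nabla u|^2) - 2\alpha^2\partial_t\partial_i\big(u_j\partial_i u_j\big) + 2\alpha^2\partial_i\big(\partial_t u_j\,\partial_i u_j\big) + \nabla\cdot(|u|^2 u) \\
&\quad + 2\alpha^2\nabla\cdot(\partial_k u_j u_j \partial_k u) + 2\alpha^2\nabla\cdot(\partial_k u_j\partial_k u_j\,u) + 2\alpha^2\partial_i\partial_k(u_i\partial_k u_j u_j) = 0
\end{align*}
in $\mathcal{D}'(\mathbb{T}^3\times(0,T))$.

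Second, I would test this identity against the time-only function $\psi_1(t) = \int_0^t \big[\varphi_\epsilon(t'-t_1) - \varphi_\epsilon(t'-t_2)\big]\,dt'$, with $\varphi_\epsilon$ the temporal mollifier and $\epsilon$ small, exactly as in the Leray-$\alpha$ proof; this is a legitimate element of $\mathcal{D}(\mathbb{T}^3\times(0,T))$, being smooth, constant in $x$, and compactly supported in $(0,T)$. Because $\psi_1$ is independent of $x$, every term that is a total spatial derivative integrates to zero over $\mathbb{T}^3$ by periodicity: this removes $\nabla\cdot(|u|^2 u)$, the $\alpha^2$ divergence and double-divergence terms, and — crucially — also the two terms $-2\alpha^2\partial_t\partial_i(u_j\partial_i u_j)$ and $2\alpha^2\partial_i(\partial_t u_j\,\partial_i u_j)$, each of which carries an outer spatial derivative $\partial_i$ (this vanishing was already anticipated in the derivation of the energy balance). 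What survives is
\[
\int_0^T\int_{\mathbb{T}^3}\big(|u|^2 + \alpha^2|\nabla u|^2\big)\,\partial_t\psi_1\,dx\,dt = 0 .
\]

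Third, substituting $\partial_t\psi_1(t) = \varphi_\epsilon(t-t_1) - \varphi_\epsilon(t-t_2)$ and letting $\epsilon\to 0$, the Lebesgue differentiation theorem applies because $t\mapsto \int_{\mathbb{T}^3}\big(|u(x,t)|^2 + \alpha^2|\nabla u(x,t)|^2\big)\,dx = \lVert u(\cdot,t)\rVert_{H^1}^2$ lies in $L^\infty(0,T)\subset L^1_{\mathrm{loc}}(0,T)$, since $u\in L^\infty((0,T);H^1(\mathbb{T}^3))$. This yields $\lVert u(\cdot,t_1)\rVert_{H^1}^2 = \lVert u(\cdot,t_2)\rVert_{H^1}^2$ for almost every $t_1,t_2\in(0,T)$, where the norm is the one in \eqref{H1norm}, which is the assertion.

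As for the main obstacle: there is essentially no new analytic content, the vanishing of the defect terms being the substance of Proposition \ref{suffcondeuler} and the rest being the Lebesgue-differentiation mechanism of Theorem \ref{conservationtheorem}. The only point requiring care is the bookkeeping — verifying that each of the many terms in the Euler-$\alpha$ energy balance, in particular the mixed space-time derivative terms $-2\alpha^2\partial_t\partial_i(u_j\partial_i u_j) + 2\alpha^2\partial_i(\partial_t u_j\partial_i u_j)$, is indeed a spatial derivative so that it drops out against an $x$-independent test function; once this is checked the conclusion is immediate.
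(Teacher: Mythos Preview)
Your proposal is correct and follows exactly the approach the paper takes: the paper's own proof consists of the single sentence ``The proof works the same way as the proof of Theorem \ref{conservationtheorem},'' and you have faithfully unpacked that reference, invoking Proposition \ref{suffcondeuler} to kill the defect terms and then running the time-only test function and Lebesgue differentiation argument verbatim. Your care in checking that the mixed space-time terms $-2\alpha^2\partial_t\partial_i(u_j\partial_i u_j)$ and $2\alpha^2\partial_i(\partial_t u_j\partial_i u_j)$ carry an outer spatial derivative (and hence vanish against an $x$-independent test function) is exactly the bookkeeping the paper flags during the derivation of the energy balance.
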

\begin{proof}
The proof works the same way as the proof of Theorem \ref{conservationtheorem}.
\end{proof}
\section{An overview of the results} \label{resultssection}

In the previous two sections we have proven sufficient conditions for weak solutions of the Leray-$\alpha$ model and the Euler-$\alpha$ equations to conserve energy. In this section we will provide an overview of the results for all the five models considered in this paper. The full proofs for the modified Leray-$\alpha$, Clark-$\alpha$ and Leray-$\alpha$ MHD models will be given in the appendices.
\begin{remark}[Besov Onsager exponents]
The sufficient conditions that we prove in this paper are stated in Table \ref{besovtable}. Notice that these are upper bounds on the Onsager exponents. The H\"older exponents which are the threshold for energy conservation are the same as the given Besov exponents, which can be seen by the Besov embedding $C^s (\mathbb{T}^3) \subset B^{s}_{3,\infty} (\mathbb{T}^3)$ for any $s > 0$ (see, e.g., \cite{leoni} for further details).

In the case of the MHD Leray-$\alpha$ model, we observe that it is possible to weaken the assumptions on $B$ by strengthening the assumptions on $v$. Therefore it is possible to `trade' regularity assumptions between the two quantities. This is also true for the standard MHD equations, as noted by \cite{buckmasterreview}.

\begin{table}[H]
\hspace*{0cm} \begin{tabular}{ |p{3.5cm}|p{3cm}|p{3cm}|p{5cm}|  }
 \hline
 \multicolumn{4}{|c|}{Onsager exponents (upper bounds)} \\
 \hline
 Model & Conserved quantity & Regularity assumption & Besov assumption  \\
 \hline
 Euler   & $\lVert v \rVert_{L^2}$ & $v \in L^\infty_t ( L^2_x)$ & $v \in L^3 ( (0,T);B^s_{3, \infty}), \; s>\frac{1}{3} $    \\
 Leray-$\alpha$ & $\lVert v \rVert_{L^2}$ & $v \in L^\infty_t ( L^2_x)$ & $v \in L^3 ( (0,T);B^s_{3, \infty}), \; s>0 $ \\
 Euler-$\alpha$ & $\lVert u \rVert_{H^1}$ & $u \in L^\infty_t ( H^1_x)$ & $u \in L^3 ( (0,T);B^s_{3, \infty}), \; s>1 $ \\
 Modified Leray-$\alpha$ & $\lVert u \rVert_{H^1}$ & $u \in L^\infty_t ( H^1_x)$ & $u \in L^3 ( (0,T);B^s_{3, \infty}), \; s>1 $  \\
 Clark-$\alpha$ & $\lVert u \rVert_{H^1}$ & $u \in L^\infty_t (H^1_x)$ & $u \in L^3 ( (0,T);B^s_{3, \infty}), \; s>1 $ \\
 MHD Leray-$\alpha$ & $\lVert v \rVert_{L^2}^2 + \lVert B \rVert_{L^2}^2$ & $v$, $B \in L^\infty_t (L^2_x)$ & $v \in L^3 ( (0,T);B^s_{3, \infty}),  s>0 $, \; $B \in L^3 ( (0,T);B^r_{3, \infty}), \; r>0 $ and $s + 2 r > 1$ \\
 \hline
\end{tabular}
\caption{Sufficient conditions for energy conservation stated in terms of Besov spaces (and H\"older spaces)}
\label{besovtable}
\end{table}
\end{remark}
It is also possible to find criteria in terms of Sobolev spaces, as we observe in the next remark.
\begin{remark}[Sobolev Onsager exponents] \label{onsagerexponents}
It should also be observed that for the models considered in this work one can also phrase the conditions in terms of Sobolev spaces or alternatively, as a condition for $v$ to have negative Sobolev regularity with a certain exponent. These results are stated in Table \ref{sobolevtable}.
\begin{table}[h]
\begin{tabular}{ |p{3.5cm}|p{3cm}|p{3cm}|p{2cm}|p{3cm}|  }
 \hline
 \multicolumn{5}{|c|}{Onsager exponents for Sobolev spaces in three dimensions} \\
 \hline
 Model & Conserved quantity & Standard regularity assumption & Condition on $u$  & Condition on $v$ \\
 \hline
 Euler & $\lVert v \rVert_{L^2}$ & $v \in L^\infty_t (L^2_x)$ &  $-$ & $H^{5/6}$ \\
 Leray-$\alpha$ & $\lVert v \rVert_{L^2}$ & $v \in L^\infty_t ( L^2_x)$ & $H^{5/2}$ &  $H^{1/2}$\\
Euler-$\alpha$ & $\lVert u \rVert_{H^1}$ & $u \in L^\infty_t ( H^1_x)$ & $H^{3/2}$ &  $H^{-1/2}$\\
 Modified Leray-$\alpha$ & $\lVert u \rVert_{H^1}$ & $u \in L^\infty_t ( H^1_x)$ & $H^{3/2}$  & $H^{-1/2}$ \\
 Clark-$\alpha$ & $\lVert u \rVert_{H^1}$ & $u \in L^\infty_t ( H^1_x)$ & $H^{3/2}$  & $H^{-1/2}$ \\
MHD Leray-$\alpha$ & $\lVert v \rVert_{L^2}^2 +\lVert B \rVert_{L^2}^2$ & $v,B \in L^\infty ( L^2)$ & $-$  & $v \in H^s, B \in H^r$, $s, r > 1/2$ and $s + 2 r > 5/2$ \\
 \hline
\end{tabular}
\caption{ The given Sobolev exponents are thresholds for energy conservation, i.e. if the solution has higher Sobolev regularity than given in this table, it must conserve energy.
}
\label{sobolevtable}
\end{table}
\end{remark}
\begin{remark}
The work \cite{beekiealpha} came to our attention while this paper was being written. It is useful to contrast the results of this paper with the results from \cite{beekiealpha}. In particular, the paper states the following condition for energy  conservation for the Euler-$\alpha$ equations
\begin{equation} \label{beekiecondition}
u \in L^3 ((0,T); B^s_{3, \infty} (\mathbb{T}^3)), \quad s > 1.
\end{equation}
Note that this condition is an only slightly stronger requirement compared to the one stated in Proposition \ref{zerodefecteuler}. In Proposition \ref{suffcondeuler} we have shown that condition \eqref{beekiecondition} can be derived from Proposition \ref{zerodefecteuler}.

It is important to emphasise that \cite{beekiealpha} only considers the Euler-$\alpha$ equations and not the 4 other models considered in this paper. The main focus of \cite{beekiealpha} is developing a convex integration scheme for the Euler-$\alpha$ equations to construct weak solutions that do not conserve energy. In this paper we provide a study of several subgrid scale $\alpha$-models and in particular investigate what determines the Onsager exponents for these models.

In addition, the paper \cite{beekiealpha} relies on the approach of \cite{constantin} by using commutator estimates to prove conservation of energy. We rely on the approach of \cite{duchon} by using an equation of energy balance and analysing the defect term. One advantage of the approach of using defect terms is that it is more straightforward to analyse them. In Remark \ref{fractionallaplacian} we will consider what happens to the Onsager exponent if the Helmholtz regularisation in equation \eqref{helmholtz} for the inviscid Leray-$\alpha$ model is replaced by $v = u + \alpha^{2 \theta} (-\Delta)^\theta u$. This leads to a linear relationship between the Onsager exponent and $\theta$. Deriving such a relation by using commutator estimates is possible, but would be more tedious.
\end{remark}

To conclude this section, we make a remark on what `determines' the Onsager exponent for a given PDE.
\begin{remark}[`Origin' of the Onsager exponent] \label{originremark}
Observe that the bound \eqref{leraycondition} in Proposition \ref{zerodefect} is of the form $|\xi|\sigma_1(|\xi|)$, where $\sigma_1=o (1)$, as $|\xi| \to 0$. This is the essence of why different models have different Onsager exponents. The required regularity is to ensure bounds on the defect terms of  the form $|\xi|\sigma_k(|\xi|)$, where the $|\xi|$  factor is `distributed' into fractional regularities among the three terms in the product, and  where  the relevant  $\sigma_k =o(1)$, as $|\xi|\to 0$. Therefore,  the more regular these terms are, the lower the Onsager exponent will be. For instance, for the case of the Euler equations one has the following condition for energy conservation \cite{duchon}
\begin{equation*}
\int_{\mathbb{T}^3} \lvert \delta v (\xi;x,t) \rvert^3 dx \leq C(t) \lvert \xi \rvert \sigma_1 ( \lvert \xi \rvert),
\end{equation*}
which implies an Onsager (Besov) exponent of $\frac{1}{3}$ since all the three terms are the same and hence the factor $\lvert \xi \rvert$ is equally distributed among the three terms. In the case of the Leray-$\alpha$ model, the term $\delta v $ has two degrees less regularity than the term $\delta u$. Therefore assuming a Besov exponent of more than $0$ for $v$ implies that $ u$ has a Besov exponent of more than $2$ which combines to give a total exponent of more than 1.

So the crucial difference between the Euler equations and the Leray-$\alpha$ model is that in the defect term one factor of $\delta v$ is replaced by a factor of $\delta u$, which has two degrees more regularity. Therefore under less assumptions on the weak solution than for the Euler equations, the regularity of the factor $\delta u$ still can ensure that $\sigma_1=o(1)$, as $\lvert \xi \rvert \to 0$, which implies that the defect term is zero.
\end{remark}

\section{Conclusion} \label{conclusion}
In this paper we have studied several subgrid scale $\alpha$-models of turbulence, obtained equations of local energy balance and found Onsager exponents in terms of Besov, H\"older and Sobolev spaces. Several of these exponents were different from $1/3$, which is the Onsager exponent for the Euler equations. This means that the Onsager exponent is not universal and is plausibly determined by the regularity of the coefficient functions of the nonlinearity.

It was already discussed in Remark \ref{originremark} what mechanism seems to lead to different values of the Onsager exponent for the different models. We mention that it is straightforward to predict the Onsager exponent for a given PDE of this type without doing a complete analysis by establishing an equation of local energy balance with a precise defect term.

If a PDE of this type has a formally conserved quantity, the way one obtains the conservation law is by multiplying by a given function (such as $u$ or $v$) and then invoking the divergence theorem to show the time derivative of the conserved quantity is zero.

This already tells you what kind of products the defect term will contain. From Lemma \ref{zerodefect} one can conclude that the function $\sigma$ in equation \eqref{leraycondition} needs to be $o(1)$. Therefore one can already tell which regularity assumption will assure a condition of type \eqref{leraycondition} is satisfied. The difference in Onsager exponents therefore is caused by the different forms of the defect terms.

For the Leray-$\alpha$ model one has a product $\lvert \delta v \rvert^2 \lvert \delta u \rvert$. For the Euler-$\alpha$ equations it is $\lvert \delta \nabla u \rvert^2 \lvert \delta u \rvert$. Since $u$ is more regular than $v$, it follows that the Euler-$\alpha$ model has a lower Onsager exponent than the Leray-$\alpha$ model (namely a Besov exponent of 1 instead of $2$ for $u$).

Several other MHD-$\alpha$ models exist and a similar analysis for these should be possible. Examples include the modified Leray-$\alpha$ MHD model, see \cite{linshiz} for more details. In addition, one can consider what happens if one uses a different regularisation instead of the standard Helmholtz regularisation which was used throughout the paper (see also \cite{olson}). In the following remark we look at this point for the Leray-$\alpha$ model.
\begin{remark} \label{fractionallaplacian}
A possibly interesting generalisation of the results in this work is the following: Suppose that instead of the Helmholtz regularisation $v = u - \alpha^2 \Delta u$ we consider the regularisation
\begin{equation*}
v = u + \alpha^{2\theta} (-\Delta)^\theta u,
\end{equation*}
for $\theta > 0$ positive. For such a regularisation $v(\cdot, t) \in L^2 (\mathbb{T}^3)$ implies that $u(\cdot, t) \in H^{2 \theta} (\mathbb{T}^3)$. Note that a general case involving viscosity given by a fractional Laplacian was studied in \cite{olson}. Assume in addition that $u$ and $v$ are incompressible and satisfy the equation
\begin{equation*}
\partial_t v + \nabla \cdot (u \otimes v) + \nabla p = 0.
\end{equation*}
For the choice $\theta = 1$ we recover the Leray-$\alpha$ model. One can then ask how the Onsager exponent changes as $\theta \rightarrow 0$. In the formal limit we recover the Euler equations. That means that one would expect the Onsager exponent of this model to converge to $\frac{1}{3}$ as $\theta \rightarrow 0$ (formally at least). By similar reasoning as in the proof of Theorem \ref{energyequationtheorem} we can establish an equation of local energy balance. One can make the following observations.
\begin{itemize}
    \item If $\theta > \frac{1}{2}$ the Onsager exponent is at most 0 (we are referring to the H\"older exponent here). This is because $v (\cdot, t) \in C^{0+} (\mathbb{T}^3)$ implies that $u (\cdot, t) \in C^{0, 2 \theta+} (\mathbb{T}^3)$, here the notation $u (\cdot, t) \in C^{0,\beta+} (\mathbb{T}^3)$ means that $u(\cdot, t) \in C^{0,\beta'} (\mathbb{T}^3)$ for some $\beta' > \beta$. Therefore the fact that $v (\cdot, t) \in C^{0+} (\mathbb{T}^3)$ implies that the condition from Lemma \ref{zerodefect} is satisfied and the energy is conserved by Theorem \ref{conservationtheorem}.
    \item If $0 < \theta < \frac{1}{2}$ then we can deduce the Onsager exponent as follows. Suppose $v(\cdot, t) \in C^{0, \gamma} (\mathbb{T}^3)$, then we have that $u(\cdot, t) \in C^{0, \gamma + 2 \theta} (\mathbb{T}^3)$. Together the product $\lvert \delta u \rvert \lvert \delta v \rvert^2$ must have a little bit more than Lipschitz decay as $\lvert \xi \rvert \rightarrow 0$, this leads to the equation
    \begin{equation*}
    3 \gamma + 2 \theta = 1 \implies \gamma = \frac{1}{3} - \frac{2}{3} \theta.
    \end{equation*}
    This means that if $\gamma > \frac{1}{3} - \frac{2}{3} \theta$, energy is conserved by Theorem \ref{conservationtheorem}. Like in the case of the original Onsager exponent for the Euler equations, this inequality is strict. Observe that in the limit $\theta \rightarrow 0$ we indeed recover the Onsager exponent $\frac{1}{3}$ for the Euler equations. Hence we have justified that the Onsager exponent indeed converges formally to $\frac{1}{3}$ as $\theta \rightarrow 0$.
\end{itemize}
\end{remark}
\section*{Acknowledgements}
The first author would like to thank the Groningen University Fund, Wolfson College Cambridge, the Cambridge Trust and the Hendrik Muller fund for financial support when this work was completed. The authors would like to thank the Isaac Newton Institute for Mathematical Sciences, Cambridge, for support and hospitality during the programme ``Mathematical aspects of turbulence: where do we stand?'' where work on this paper was undertaken. This work was supported by EPSRC grant no. EP/K032208/1.

\begin{appendices}
\section{The Modified Leray-$\alpha$ model} \label{modifiedleraysection}
The Modified Leray-$\alpha$ model is given by
\begin{equation*}
\partial_t v  +  \nabla \cdot (v \otimes u) + \nabla p = 0, \quad v = u - \alpha^2 \Delta u, \quad \nabla \cdot u = \nabla \cdot v = 0.
\end{equation*}
The formally conserved quantity is again $\lVert u \rVert_{H^1}$.
\begin{definition}
We say that $u \in L^\infty ((0,T); H^1 (\mathbb{T}^3))$ and $p \in L^\infty ((0,T); L^1 (\mathbb{T}^3))$ is a weak solution of the Modified Leray-$\alpha$ model if it satisfies the following equations for all $\psi \in \mathcal{D } (\mathbb{T}^3 \times (0,T); \mathbb{R}^3)$ and $\chi \in \mathcal{D} (\mathbb{T}^3 \times (0,T); \mathbb{R})$
\begingroup
\allowdisplaybreaks
\begin{align*}
&\int_0^T \int_{\mathbb{T}^3 } u \cdot \partial_t \psi dx dt + \alpha^2 \int_0^T \int_{\mathbb{T}^3 } \nabla u : \nabla \partial_t \psi dx dt + \int_0^T \int_{\mathbb{T}^3}  u \otimes u : \nabla \psi dx dt \\
&+ \alpha^2 \int_0^T \int_{\mathbb{T}^3} \partial_k u \otimes \partial_k u : \nabla \psi dx dt + \alpha^2 \int_0^T \int_{\mathbb{T}^3} \partial_k u \otimes  u : \nabla \partial_k \psi dx dt + \int_0^T \int_{\mathbb{T}^3} p \nabla \cdot \psi dx dt = 0,  \\
&\int_0^T \int_{\mathbb{T}^3} u_i \partial_i \chi dx dt = 0.
\end{align*}
\endgroup
The pressure is determined up to a constant, which we fix by
\begin{equation*}
\int_{\mathbb{T}^3} p(x,t) dx = 0.
\end{equation*}
\end{definition}
As before, we now state a result stating that we can take a larger space of test functions.
\begin{lemma}\label{gentestfuncmod}
The weak formulation of the Modified Leray-$\alpha$ model still holds for test functions $\psi \in W^{1,1}_0 ((0,T) ; H^1 (\mathbb{T}^3)) \cap L^1 ((0,T); H^3 (\mathbb{T}^3))$
\end{lemma}
\begin{proof}
The proof proceeds in the same fashion as the proof of Lemma \ref{generaltestfunctions}. Again one relies on a limiting argument by using the approximation of Sobolev functions by smooth functions.
\end{proof}
Now we establish the equation of local energy balance.
\begin{theorem}
Let $u$ be a weak solution of the Modified Leray-$\alpha$ model, such that $u \in L^3 ((0,T); W^{1,9/4} (\mathbb{T}^3))$. Then the equation of local energy balance holds
\begin{align*}
&\partial_t ( \lvert u \rvert^2 ) + \alpha^2 \partial_t (\lvert \nabla u \rvert^2) - 2 \alpha^2 \partial_t \partial_i \bigg( u_j \partial_i u_j\bigg) + 2 \alpha^2\partial_i \bigg( \partial_t u_j \partial_i u_j \bigg) + 2 \nabla \cdot (pu) \\
&+ D_{4} (u) + \nabla \cdot (\lvert u\rvert^2 u) + 2\nabla \cdot \big( u_j \partial_k u_j \partial_k u  \big) + \nabla \cdot ( \partial_k (\lvert u \rvert^2 \partial_k u) ) + D_{5} (u) = 0. \nonumber
\end{align*}
This equation holds in the sense of distributions. Here the defect terms are given by
\begin{align*}
D_{4} (u) &\coloneqq \lim_{\epsilon \rightarrow 0} \int_{\mathbb{R}^3} \nabla_\xi \phi_\epsilon (\xi) \cdot \delta u(\xi;x,t) \lvert \delta u (\xi ;x,t) \lvert^2 d \xi, \\
D_{5} (u) &\coloneqq \lim_{\epsilon \rightarrow 0} \int_{\mathbb{R}^3} \partial_i \phi_\epsilon (\xi) \delta u_j (\xi;x,t) \delta \partial_k u_i (\xi;x,t) \delta \partial_k u_j (\xi;x,t) d \xi.
\end{align*}
Note that the limits hold in the sense of distributions and are independent of the choice of mollifier.
\end{theorem}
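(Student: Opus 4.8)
The plan is to repeat, in method, the Duchon--Robert computation carried out for Theorem~\ref{energyequationtheorem} and its Euler-$\alpha$ counterpart. First I would mollify the equation of the Modified Leray-$\alpha$ model by convolution with $\phi_\epsilon$, obtaining $\partial_t v^\epsilon + \partial_i (v_j u_i)^\epsilon + \nabla p^\epsilon = 0$ pointwise a.e.; as before $v^\epsilon$ and $u^\epsilon = (I-\alpha^2\Delta)^{-1} v^\epsilon$ are $C^\infty$ in space, and the equation itself lifts their time regularity, so $u^\epsilon\chi$ lies in the enlarged test-function class of Lemma~\ref{gentestfuncmod}. I would then dot the mollified equation with $u\chi$, integrate over $\mathbb{T}^3\times(0,T)$, and subtract from it the weak formulation tested against $u^\epsilon\chi$. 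On the mollified side I substitute $v = u - \alpha^2\Delta u$, so that $\partial_i(v_j u_i)^\epsilon = \partial_i(u_j u_i)^\epsilon - \alpha^2\partial_i(\Delta u_j u_i)^\epsilon$, and then integrate by parts to convert $\Delta u_j u_i$ into first-derivative terms matching the $(\partial_k u\otimes\partial_k u)$ and $(\partial_k u\otimes u)$ terms of the weak formulation.

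The resulting identity I would organise into four blocks. (i) Time-derivative terms: as in the Euler-$\alpha$ case these collapse to $-\langle \partial_t(u\cdot u^\epsilon) + \alpha^2\partial_t(\nabla u:\nabla u^\epsilon) + \alpha^2(\text{mixed }\partial_t\partial_i\text{-terms}),\chi\rangle$, the mixed terms being the $\epsilon$-versions of $-2\alpha^2\partial_t\partial_i(u_j\partial_i u_j) + 2\alpha^2\partial_i(\partial_t u_j\partial_i u_j)$, which vanish once one restricts to $\chi=\chi(t)$. (ii) The $u\otimes u$ block: the Duchon--Robert identity $D_{4,\epsilon}(u) = -\frac{1}{2}\partial_i(u_i|u|^2)^\epsilon + \frac{1}{2}u_i\partial_i(|u|^2)^\epsilon + u_j\partial_i(u_j u_i)^\epsilon - u_i u_j\partial_i u_j^\epsilon$ lets me rewrite this as $\langle D_{4,\epsilon}(u) + \frac{1}{2}\nabla\cdot((|u|^2 u)^\epsilon - (|u|^2)^\epsilon u) + \nabla\cdot((u\cdot u^\epsilon)u),\chi\rangle$. (iii) The $\alpha^2$ block built from $(\partial_k u\otimes\partial_k u)$, $(\partial_k u\otimes u)$ and the $\Delta u\otimes u$ piece: an analogous but lengthier rearrangement isolates the mollified defect $D_{5,\epsilon}(u) = \int_{\mathbb{R}^3}\partial_i\phi_\epsilon(\xi)\,\delta u_j\,\delta\partial_k u_i\,\delta\partial_k u_j\,d\xi$ together with the divergences $2\nabla\cdot(u_j\partial_k u_j\partial_k u)$, $\nabla\cdot\partial_k(|u|^2\partial_k u)$ and several commutators of mollified products. (iv) The pressure terms reassemble as $-\langle\nabla\cdot(pu^\epsilon + p^\epsilon u),\chi\rangle$.

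The crux is the limit $\epsilon\to 0$. Using $u\in L^\infty_t H^1_x\cap L^3_t W^{1,9/4}_x$ and the three-dimensional Sobolev embedding $W^{1,9/4}(\mathbb{T}^3)\hookrightarrow L^9(\mathbb{T}^3)$, every triple product occurring above --- of the schematic forms $|u|^2 u$, $u\otimes u$, and $u\cdot|\nabla u|^2$ (the last estimated by $\|u\|_{L^9}\|\nabla u\|_{L^{9/4}}^2$, hence $L^1$ in space, and by the $L^3_t$ bound in space-time) --- belongs to $L^1(\mathbb{T}^3\times(0,T))$, so each commutator $(fg)^\epsilon - fg^\epsilon$ tends to $0$ in $L^1$, exactly as in Theorem~\ref{energyequationtheorem}; the pressure terms are handled in the same way once the integrability of $p$ following from the distributional pressure equation and elliptic regularity is recorded. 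Since the sum of all non-defect terms then converges to a distribution that is visibly mollifier-independent, so does $[D_{4,\epsilon}(u) + D_{5,\epsilon}(u)]$; moreover $D_{4,\epsilon}(u)$ has the same integral form as $D_{2,\epsilon}$, so $H^1 = B^1_{2,2}\subset B^{1/2}_{3,\infty}$ together with \eqref{besovinequality} gives $\int_{\mathbb{T}^3}|\delta u|^3\,dx\leq|\xi|^{3/2}\|u\|_{H^1}^3$ and hence $D_4(u)=0$ for every weak solution (as in the proof of Proposition~\ref{suffcondeuler}), forcing $D_{5,\epsilon}(u)$ to converge separately, again mollifier-independently. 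Rearranging the limiting identity gives the claimed local energy balance.

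The step I expect to be the main obstacle is block (iii): tracking the sizeable number of $\alpha^2$-terms produced once $\Delta u\otimes u$ is distributed by repeated integration by parts, and identifying the exact algebraic identity that exhibits $D_{5,\epsilon}(u)$ alongside only perfect divergences and genuine commutators --- getting the index contractions $\delta u_j\,\delta\partial_k u_i\,\delta\partial_k u_j$ and all signs right, and checking that $W^{1,9/4}$ is precisely the regularity needed to annihilate the worst triple product in the limit. This is laborious but mechanical, and requires no idea beyond those in the Leray-$\alpha$ and Euler-$\alpha$ proofs.
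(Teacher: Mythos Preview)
Your proposal is correct and follows essentially the same approach as the paper: mollify, test with $u^\epsilon\chi$ via Lemma~\ref{gentestfuncmod}, subtract the mollified equation dotted with $u\chi$, and organise into the same four blocks (time derivatives, the $u\otimes u$ piece producing $D_{4,\epsilon}$, the $\alpha^2$ piece producing $D_{5,\epsilon}$ plus divergences, and the pressure), using the embedding $W^{1,9/4}\hookrightarrow L^9$ to place all triple products in $L^1$ and arguing that $D_4(u)=0$ automatically so that $D_{5,\epsilon}$ converges separately. The paper also records the precise integrability $p\in L^{3/2}_t L^{9/8}_x$ and $\partial_t u\in L^{3/2}_t L^{9/5}_x$ needed for the mixed $\partial_t\partial_i$ terms, which you should make explicit when carrying out block~(i).
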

\begin{proof}
Mollifying the equation of the Modified Leray-$\alpha$ model in space (with $\phi_\epsilon$) gives us that $\partial_t v^\epsilon \in L^\infty ((0,T); C^\infty (\mathbb{T}^3))$. Therefore we can conclude that $u^\epsilon, v^\epsilon \in W^{1,\infty} ((0,T); C^\infty (\mathbb{T}^3))$.

By Lemma \ref{gentestfuncmod} we can take $u^\epsilon \chi$ (for $\chi \in \mathcal{D} (\mathbb{T}^3 \times (0,T); \mathbb{R})$ ) as a test function in the weak formulation. If we subtract the mollified equation multiplied by $u \chi$ from this equation, we obtain that
\begingroup
\allowdisplaybreaks
\begin{align*}
&\int_0^T \int_{\mathbb{T}^3 } u \partial_t (u^\epsilon \chi) dx dt - \int_0^T \int_{\mathbb{T}^3} u \chi \partial_t (u^\epsilon) dx dt + \alpha^2 \int_0^T \int_{\mathbb{T}^3 } \nabla u : \nabla \partial_t (u^\epsilon \chi) dx dt  \\
&- \alpha^2 \int_0^T \int_{\mathbb{T}^3} \nabla (u \chi) :  \partial_t \nabla u^\epsilon dx dt +\int_0^T \int_{\mathbb{T}^3 } (u \otimes u) : \nabla (u^\epsilon \chi) dx dt \\
&- \int_0^T \int_{\mathbb{T}^3 }  \chi u \cdot  (\nabla \cdot (u \otimes u)^\epsilon) dx dt + \alpha^2 \int_0^T \int_{\mathbb{T}^3} \partial_k u \otimes \partial_k u : \nabla (u^\epsilon \chi ) -\\
&\alpha^2 \int_0^T \int_{\mathbb{T}^3} u \chi \cdot ( \nabla \cdot (\partial_k u \otimes \partial_k u)^\epsilon ) dx dt + \alpha^2 \int_0^T \int_{\mathbb{T}^3} \partial_k u \otimes u: \nabla \partial_k \chi dx dt \\
&- \alpha^2 \int_0^T \int_{\mathbb{T}^3} \partial_k (u \chi) \cdot (\nabla \cdot (\partial_k u \otimes u)^\epsilon) + \int_0^T \int_{\mathbb{T}^3} p \nabla \cdot (u^\epsilon \chi) dx dt - \int_0^T \int_{\mathbb{T}^3} u \chi \cdot \nabla p^\epsilon dx dt = 0.
\end{align*}
\endgroup
One can prove that $p \in L^{3/2} ((0,T); L^{9/8} (\mathbb{T}^3))$ and $\partial_t v \in L^{3/2} ((0,T); W^{-2,9/5} (\mathbb{T}^3))$, which implies that $\partial_t u \in L^{3/2} ((0,T); L^{9/5} (\mathbb{T}^3))$. The time derivative terms work the same way as before, we can write that
\begingroup
\allowdisplaybreaks
\begin{align*}
&\int_0^T \int_{\mathbb{T}^3 } u \partial_t (u^\epsilon \chi) dx dt - \int_0^T \int_{\mathbb{T}^3} u \chi \partial_t (u^\epsilon) dx dt + \alpha^2 \int_0^T \int_{\mathbb{T}^3 } \nabla u : \nabla \partial_t (u^\epsilon \chi) dx dt  \\
&- \alpha^2 \int_0^T \int_{\mathbb{T}^3} \nabla (u \chi) :  \partial_t \nabla u^\epsilon dx dt = \int_0^T \int_{\mathbb{T}^3} \bigg[ (u \cdot u^\epsilon) \partial_t \chi + \alpha^2 \nabla u : \nabla u^\epsilon \partial_t \chi \bigg] dx dt \\
&+ \alpha^2 \int_0^T \int_{\mathbb{T}^3} \bigg[ \nabla u : \nabla \chi \otimes \partial_t u^\epsilon + \nabla u : \partial_t \nabla \chi \otimes u^\epsilon - \nabla \chi \otimes u : \partial_t \nabla u^\epsilon \bigg] dx dt \\
&= \int_0^T \int_{\mathbb{T}^3} \bigg[ (u \cdot u^\epsilon) \partial_t \chi + \alpha^2 \nabla u : \nabla u^\epsilon \partial_t \chi \bigg] dx dt \\
&+ \alpha^2 \int_0^T \int_{\mathbb{T}^3} \bigg[ \nabla u : \nabla \chi \otimes \partial_t u^\epsilon + \nabla u : \partial_t \nabla \chi \otimes u^\epsilon + \partial_t \nabla \chi \otimes u : \nabla u^\epsilon + \nabla \chi \otimes \partial_t  u :\nabla u^\epsilon \bigg] dx dt.
\end{align*}
\endgroup
This can be written as
\begin{equation*}
- \bigg\langle \partial_t ( u \cdot u^\epsilon) + \alpha^2 \partial_t (\nabla u : \nabla u^\epsilon) + \alpha^2 \partial_t \partial_i \bigg( u_j^\epsilon \partial_i u_j + u_j \partial_i u_j^\epsilon \bigg) + \alpha^2\partial_i \bigg( \partial_t u_j^\epsilon \partial_i u_j + \partial_t u_j \partial_i u_j^\epsilon \bigg) , \chi \bigg\rangle.
\end{equation*}
In the limit $\epsilon \rightarrow 0$ this converges to
\begin{equation*}
- \bigg\langle \partial_t ( \lvert u \rvert^2 ) + \alpha^2 \partial_t (\lvert \nabla u \rvert^2) + 2 \alpha^2 \partial_t \partial_i \bigg( u_j \partial_i u_j\bigg) + 2 \alpha^2\partial_i \bigg( \partial_t u_j \partial_i u_j \bigg), \chi \bigg\rangle.
\end{equation*}
The pressure terms can be written as
\begin{align*}
&\int_0^T \int_{\mathbb{T}^3} p \nabla \cdot (u^\epsilon \chi) dx dt - \int_0^T \int_{\mathbb{T}^3} u \chi \cdot \nabla p^\epsilon dx dt = \int_0^T \int_{\mathbb{T}^3} [p u^\epsilon + u p^\epsilon] \cdot \nabla \chi dx dt \\
&= - \bigg\langle \nabla \cdot (p u^\epsilon + p^\epsilon u ), \chi \bigg\rangle.
\end{align*}
The most involved part are the advective (cubic in $u$) terms. We introduce the defect term given by
\begin{equation*}
D_{4,\epsilon} (u) (x,t) \coloneqq \frac{1}{2} \int_{\mathbb{R}^3} \nabla_\xi \phi_\epsilon (\xi) \cdot \delta u(\xi;x,t) \lvert \delta u (\xi ;x,t) \lvert^2 d \xi,
\end{equation*}
As before we can write that
\begingroup
\allowdisplaybreaks
\begin{align*}
&\int_0^T \int_{\mathbb{T}^3 } (u \otimes u) : \nabla (u^\epsilon \chi) dx dt - \int_0^T \int_{\mathbb{T}^3 }  \chi u \cdot  (\nabla \cdot (u \otimes u)^\epsilon) dx dt \\
&= \int_0^T \int_{\mathbb{T}^3} \bigg[- \chi D_{4, \epsilon} (u) (x,t) - \frac{1}{2} (\lvert u \rvert^2 u)^\epsilon \cdot \nabla \chi + \frac{1}{2} u (\lvert u \rvert^2)^\epsilon \cdot \nabla \chi \bigg] dx dt \\
&+ \int_0^T \int_{\mathbb{T}^3 } (u \otimes u) : \nabla \chi \otimes u^\epsilon dx dt,
\end{align*}
\endgroup
and observe that as $\epsilon \rightarrow 0$ this converges to
\begin{equation*}
\int_0^T \int_{\mathbb{T}^3 } (u \otimes u) : \nabla \chi \otimes u dx dt.
\end{equation*}
The other advective terms can be written as (omitting the factor $\alpha^2$)
\begingroup
\allowdisplaybreaks
\begin{align*}
& \int_0^T \int_{\mathbb{T}^3} \partial_k u_i \partial_k u_j \partial_i (u^\epsilon_j \chi ) dx dt - \int_0^T \int_{\mathbb{T}^3} u_j \chi \partial_i (\partial_k u_i  \partial_k u_j)^\epsilon  dx dt \\
&+ \int_0^T \int_{\mathbb{T}^3} \partial_k u_i u_j \partial_i \partial_k (u^\epsilon_j \chi) dx dt - \int_0^T \int_{\mathbb{T}^3} \partial_k (u_j \chi)  \partial_i  (\partial_k u_i  u_j)^\epsilon \\
&= \int_0^T \int_{\mathbb{T}^3} \partial_k u_i \partial_k u_j u^\epsilon_j \partial_i \chi  dx dt + \int_0^T \int_{\mathbb{T}^3} \partial_k u_i u_j \partial_k (u^\epsilon_j \partial_i \chi) dx dt \\
&+ \int_0^T \int_{\mathbb{T}^3} \partial_k u_i u_j \partial_i u^\epsilon_j \partial_k \chi dx dt -  \int_0^T \int_{\mathbb{T}^3} u_j \partial_k \chi  \partial_i  (\partial_k u_i  u_j)^\epsilon \\
&+ \int_0^T \int_{\mathbb{T}^3} \partial_k u_i \partial_k u_j \partial_i u^\epsilon_j \chi  dx dt - \int_0^T \int_{\mathbb{T}^3} u_j \chi \partial_i (\partial_k u_i  \partial_k u_j)^\epsilon  dx dt \\
&+ \int_0^T \int_{\mathbb{T}^3} \partial_k u_i u_j \partial_i \partial_k u^\epsilon_j \chi dx dt - \int_0^T \int_{\mathbb{T}^3} \partial_k u_j \chi  \partial_i  (\partial_k u_i  u_j)^\epsilon.
\end{align*}
\endgroup
Now we can check that
\begingroup
\allowdisplaybreaks
\begin{align*}
&\int_0^T \int_{\mathbb{T}^3} \partial_k u_i u_j u^\epsilon_j \partial_i \partial_k \chi dx dt + \int_0^T \int_{\mathbb{T}^3} \partial_k u_i u_j \partial_i u^\epsilon_j \partial_k \chi dx dt -  \int_0^T \int_{\mathbb{T}^3} u_j \partial_k \chi  \partial_i  (\partial_k u_i  u_j)^\epsilon \\
&= \int_0^T \int_{\mathbb{T}^3} \partial_k u_i u_j u^\epsilon_j \partial_i \partial_k \chi dx dt + \int_0^T \int_{\mathbb{T}^3} \partial_k u_i u_j \partial_i u^\epsilon_j \partial_k \chi dx dt +  \int_0^T \int_{\mathbb{T}^3} \partial_i u_j \partial_k \chi  (\partial_k u_i  u_j)^\epsilon \\
&+  \int_0^T \int_{\mathbb{T}^3} u_j \partial_k \partial_i \chi    (\partial_k u_i  u_j)^\epsilon \xrightarrow[]{\epsilon \rightarrow 0} \int_0^T \int_{\mathbb{T}^3} \partial_k u_i \lvert u \rvert^2 \partial_i \partial_k \chi dx dt \\
&= -\bigg\langle \nabla \cdot ( \partial_k (\lvert u \rvert^2 \partial_k u) ), \chi \bigg\rangle.
\end{align*}
\endgroup
We have more divergence terms, which are given by
\begingroup
\allowdisplaybreaks
\begin{align*}
&\int_0^T \int_{\mathbb{T}^3} \partial_k u_i \partial_k u_j u^\epsilon_j \partial_i \chi  dx dt + \int_0^T \int_{\mathbb{T}^3} \partial_k u_i u_j  \partial_k u^\epsilon_j \partial_i \chi dx dt \\
&= - \bigg\langle \nabla \cdot \big( u_j^\epsilon \partial_k u_j \partial_k u +u_j \partial_k u_j^\epsilon \partial_k u \big), \chi \bigg\rangle.
\end{align*}
\endgroup
We compute a new defect term which is given by
\begin{align*}
D_{5,\epsilon} (u) &= \int_{\mathbb{R}^3} \partial_i \phi_\epsilon (\xi) \delta u_j (\xi;x,t) \delta \partial_k u_i (\xi;x,t) \delta \partial_k u_j (\xi;x,t) d \xi = - \partial_i ( u_j \partial_k u_j \partial_k u_i)^\epsilon + \partial_k u_i \partial_i (u_j \partial_k u_j)^\epsilon \\
&+ \partial_k u_j \partial_i (u_j \partial_k u_i)^\epsilon + u_j \partial_i (\partial_k u_j \partial_k u_i)^\epsilon - \partial_k u_j \partial_k u_i \partial_i u_j^\epsilon - u_j \partial_k u_i \partial_k \partial_i u_j^\epsilon.
\end{align*}
Finally, we are left with
\begingroup
\allowdisplaybreaks
\begin{align*}
&\int_0^T \int_{\mathbb{T}^3} \partial_k u_i \partial_k u_j \partial_i u^\epsilon_j \chi  dx dt - \int_0^T \int_{\mathbb{T}^3} u_j \chi \partial_i (\partial_k u_i  \partial_k u_j)^\epsilon  dx dt \\
&+ \int_0^T \int_{\mathbb{T}^3} \partial_k u_i u_j \partial_i \partial_k u^\epsilon_j \chi dx dt - \int_0^T \int_{\mathbb{T}^3} \partial_k u_j \chi  \partial_i  (\partial_k u_i  u_j)^\epsilon dx dt \\
&= \int_0^T \int_{\mathbb{T}^3} \chi \bigg[ - D_{5,\epsilon} (u) - \partial_i ( u_j \partial_k u_j \partial_k u_i)^\epsilon + \partial_k u_i \partial_i (u_j \partial_k u_j)^\epsilon \bigg] dx dt \\
&= - \bigg\langle D_{5,\epsilon} (u) - \nabla \cdot (u_j \partial_k u_j \partial_k u)^\epsilon + \nabla \cdot (\partial_k u (u_j \partial_k u_j)^\epsilon ), \chi \bigg\rangle.
\end{align*}
\endgroup
Now we put everything together and write down the equation of local energy balance which is given by (in the limit as $\epsilon \rightarrow 0$)
\begin{align*}
&\bigg\langle \partial_t ( \lvert u \rvert^2 ) + \alpha^2 \partial_t (\lvert \nabla u \rvert^2) - 2 \alpha^2 \partial_t \partial_i \bigg( u_j \partial_i u_j\bigg) + 2 \alpha^2\partial_i \bigg( \partial_t u_j \partial_i u_j \bigg) + 2 \nabla \cdot (pu) + D_{4} (u) + \nabla \cdot (\lvert u\rvert^2 u) \\
&+ 2\nabla \cdot \big( u_j \partial_k u_j \partial_k u  \big) + \nabla \cdot ( \partial_k (\lvert u \rvert^2 \partial_k u) ) + D_{5} (u) , \chi \bigg\rangle = 0.
\end{align*}
We can argue as before that the limiting distribution $D_4 (u) + D_5 (u)$ is independent of the choice of mollifier. Like in the case for the Euler-$\alpha$ equations, we will show that $D_4 (u) = 0$ for all weak solutions of the modified Leray-$\alpha$ model. Therefore $\lim_{\epsilon \rightarrow 0} D_{5,\epsilon}$ makes sense as a distribution.
\end{proof}
Now we state the usual sufficient conditions for the defect terms to be zero.
\begin{proposition} \label{sufficientmodified}
Let $u$ be a weak solution of the inviscid modified Leray-$\alpha$ model such that $u \in L^\infty ((0,T); W^{1,9/4} (\mathbb{T}^3))$. Moreover, we assume that
\begingroup
\allowdisplaybreaks
\begin{align*}
\int_{\mathbb{T}^3} \lvert \delta u (\xi;x,t) \rvert^3 dx &\leq C(t) \lvert \xi \rvert \sigma_4 (\lvert \xi \rvert), \\
\int_{\mathbb{T}^3} \lvert \delta u (\xi;x,t) \rvert \lvert \delta \nabla u (\xi;x,t) \rvert^2 dx &\leq C(t) \lvert \xi \rvert \sigma_5 (\lvert \xi \rvert),
\end{align*}
\endgroup
where $C \in L^1 (0,T)$ and $\sigma_k \in L^\infty_{\mathrm{loc}} (\mathbb{R})$ such that $\sigma_k (\lvert \xi \rvert) \rightarrow 0$ as $\lvert \xi \rvert \rightarrow 0$, for $k=4,5$. Then $D_4 (u) = D_5 (u) = 0$. In particular, the weak solution conserves the energy.
\end{proposition}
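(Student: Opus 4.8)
The plan is to follow verbatim the argument used for Proposition \ref{zerodefect} and its Euler-$\alpha$ counterpart, Proposition \ref{zerodefecteuler}, since the only genuinely new ingredient here is the precise algebraic shape of the two defect terms. Recall from the proof of the energy balance theorem above the pointwise-in-$(x,t)$ representations
\[
D_{4,\epsilon}(u)(x,t) = \tfrac12 \int_{\mathbb{R}^3} \nabla_\xi \phi_\epsilon(\xi)\cdot \delta u(\xi;x,t)\,\lvert \delta u(\xi;x,t)\rvert^2\,d\xi,
\]
\[
D_{5,\epsilon}(u)(x,t) = \int_{\mathbb{R}^3} \partial_i \phi_\epsilon(\xi)\,\delta u_j(\xi;x,t)\,\delta \partial_k u_i(\xi;x,t)\,\delta \partial_k u_j(\xi;x,t)\,d\xi ,
\]
so that, up to fixed combinatorial constants from the index sums, $\lvert D_{4,\epsilon}(u)\rvert \le \int_{\mathbb{R}^3}\lvert\nabla\phi_\epsilon(\xi)\rvert\,\lvert\delta u\rvert^3\,d\xi$ and $\lvert D_{5,\epsilon}(u)\rvert \le c\int_{\mathbb{R}^3}\lvert\nabla\phi_\epsilon(\xi)\rvert\,\lvert\delta u\rvert\,\lvert\delta\nabla u\rvert^2\,d\xi$.

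First I would integrate $\lvert D_{4,\epsilon}(u)\rvert$ and $\lvert D_{5,\epsilon}(u)\rvert$ over $\mathbb{T}^3\times(0,T)$ and interchange the $\xi$-integral with the $(x,t)$-integral by Tonelli's theorem. This is exactly where the standing assumption $u\in L^\infty((0,T);W^{1,9/4}(\mathbb{T}^3))$ enters: by the Sobolev embedding $W^{1,9/4}(\mathbb{T}^3)\subset L^9(\mathbb{T}^3)$ together with H\"older's inequality with exponents $(9,9/4,9/4)$ (note $\tfrac19+\tfrac49+\tfrac49=1$) one gets $\lvert\delta u\rvert\,\lvert\delta\nabla u\rvert^2\in L^\infty((0,T);L^1(\mathbb{T}^3))$, and likewise $\lvert\delta u\rvert^3\in L^\infty((0,T);L^1(\mathbb{T}^3))$, so the interchange is legitimate. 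After interchanging, the two hypotheses bound the inner $x$-integrals by $C(t)\lvert\xi\rvert\sigma_4(\lvert\xi\rvert)$ and $C(t)\lvert\xi\rvert\sigma_5(\lvert\xi\rvert)$. Performing the change of variables $\xi=\epsilon z$ turns $\lvert\nabla_\xi\phi_\epsilon(\xi)\rvert\,\lvert\xi\rvert\,d\xi$ into $\lvert\nabla_z\phi(z)\rvert\,\lvert z\rvert\,dz$ (the $\epsilon^{-1}$ from differentiating the rescaled mollifier cancels the $\epsilon$ coming from $\lvert\xi\rvert$), so that $\int_0^T\!\!\int_{\mathbb{T}^3}\lvert D_{k,\epsilon}(u)\rvert\,dx\,dt \le c\int_0^T C(t)\,dt\int_{\mathbb{R}^3}\lvert\nabla_z\phi(z)\rvert\,\lvert z\rvert\,\sigma_k(\epsilon\lvert z\rvert)\,dz$ for $k=4,5$. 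Since $\phi\in C^\infty_c$ is supported in a fixed ball and $\sigma_k\in L^\infty_{\mathrm{loc}}(\mathbb{R})$, the integrand is dominated uniformly for $\epsilon\in(0,1]$ by $c\,\lvert\nabla_z\phi(z)\rvert\,\lvert z\rvert$, which is integrable, while $\sigma_k(\epsilon\lvert z\rvert)\to0$ pointwise by hypothesis; the Lebesgue dominated convergence theorem then gives $\lVert D_{4,\epsilon}(u)\rVert_{L^1(\mathbb{T}^3\times(0,T))}\to0$ and $\lVert D_{5,\epsilon}(u)\rVert_{L^1(\mathbb{T}^3\times(0,T))}\to0$, hence $D_4(u)=D_5(u)=0$ as distributions.

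Finally, to get the stated energy conservation, I would insert $D_4(u)=D_5(u)=0$ into the equation of local energy balance proved above and repeat the test-function argument of Theorem \ref{conservationtheorem}: choose $\chi$ depending only on $t$, of the form $\psi_1(t)=\int_0^t[\varphi_\epsilon(t'-t_1)-\varphi_\epsilon(t'-t_2)]\,dt'$, so that every spatial-divergence term, as well as the $\partial_t\partial_i(u_j\partial_i u_j)$ and $\partial_i(\partial_t u_j\partial_i u_j)$ contributions, integrate to zero; passing $\epsilon\to0$ and applying the Lebesgue differentiation theorem then yields $\lVert u(\cdot,t_1)\rVert_{H^1}=\lVert u(\cdot,t_2)\rVert_{H^1}$ for a.e.\ $t_1,t_2\in(0,T)$.

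The step requiring the most care is the Tonelli interchange together with the membership $\lvert\delta u\rvert\,\lvert\delta\nabla u\rvert^2\in L^\infty((0,T);L^1(\mathbb{T}^3))$: everything downstream is the routine rescaling plus dominated convergence already used twice in the paper, but one must verify that $W^{1,9/4}(\mathbb{T}^3)$ is precisely the space making the cubic gradient term integrable. I would also remark, exactly as in Proposition \ref{suffcondeuler}, that the first bound (the one with $\sigma_4$) is automatic for any weak solution with $u\in L^\infty((0,T);H^1(\mathbb{T}^3))$ via the Besov embedding $H^1=B^1_{2,2}\subset B^{1/2}_{3,\infty}$ and inequality \eqref{besovinequality}, so the genuine content of the proposition is the vanishing of $D_5(u)$.
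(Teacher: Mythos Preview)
Your proposal is correct and follows exactly the same approach as the paper, which simply states that the proof proceeds in the same fashion as Proposition \ref{zerodefect}. You have supplied considerably more detail than the paper does---in particular the explicit justification of the Tonelli interchange via the Sobolev embedding $W^{1,9/4}(\mathbb{T}^3)\subset L^9(\mathbb{T}^3)$ and H\"older with exponents $(9,9/4,9/4)$, and the remark that the $\sigma_4$ bound is automatic---but the underlying argument (bound the defect integrands, rescale $\xi=\epsilon z$, apply dominated convergence) is identical.
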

\begin{proof}
The proof proceeds in the same fashion as the proof of Proposition \ref{zerodefect}.
\end{proof}
Now we formulate sufficient conditions for the defect term to be zero.
\begin{proposition}
Let $u$ be a weak solution of the modified Leray-$\alpha$ model, such that $u \in L^3 ((0,T); B^{s}_{3,\infty} (\mathbb{T}^3))$ with $s > 1$, then $D_4 (u) = D_5 (u) = 0$, which implies that energy is conserved.
\end{proposition}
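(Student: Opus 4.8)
The plan is to follow verbatim the template already used for the Euler-$\alpha$ equations in Propositions \ref{zerodefecteuler}--\ref{suffcondeuler}: check that the hypothesis $u\in L^3((0,T);B^s_{3,\infty}(\mathbb{T}^3))$ with $s>1$ implies the two structural bounds required by Proposition \ref{sufficientmodified}, conclude $D_4(u)=D_5(u)=0$, and then feed this into the local energy balance identity and argue as in Theorem \ref{conservationtheorem}. As a preliminary I would record that $B^s_{3,\infty}(\mathbb{T}^3)\hookrightarrow W^{1,3}(\mathbb{T}^3)$ for $s>1$, so that $\nabla u(\cdot,t)\in B^{s-1}_{3,\infty}\cap L^3$; in particular $u\in L^3((0,T);W^{1,3})\subset L^3((0,T);W^{1,9/4})$, which is the standing regularity needed for the local energy balance theorem of this section.

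For the first bound I would use the endpoint Besov embedding $H^1(\mathbb{T}^3)=B^1_{2,2}(\mathbb{T}^3)\subset B^{1/2}_{3,\infty}(\mathbb{T}^3)$ together with \eqref{besovinequality} to obtain
$$\int_{\mathbb{T}^3}\lvert\delta u(\xi;x,t)\rvert^3\,dx=\lVert\delta u\rVert_{L^3}^3\le C\lvert\xi\rvert^{3/2}\lVert u\rVert_{H^1}^3,$$
so one may take $\sigma_4(\lvert\xi\rvert)=\lvert\xi\rvert^{1/2}\to0$ and $C(t)=C\lVert u(\cdot,t)\rVert_{H^1}^3$, which is bounded in $t$. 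Hence $D_4(u)=0$ for \emph{every} weak solution of the modified Leray-$\alpha$ model, with no extra regularity assumption. For the second bound I would split the three factors by H\"older's inequality in $L^3\times L^3\times L^3$, estimating the single factor $\delta u$ by the elementary bound $\lVert\delta u\rVert_{L^3}\le\lvert\xi\rvert\,\lVert\nabla u\rVert_{L^3}$ (integrating $u$ along the segment $[x,x+\xi]$, legitimate since $\nabla u\in L^3$) and each factor $\delta\nabla u$ by \eqref{besovinequality} applied to $\nabla u\in B^{s-1}_{3,\infty}$, i.e. $\lVert\delta\nabla u\rVert_{L^3}\le C\lvert\xi\rvert^{s-1}\lVert u\rVert_{B^s_{3,\infty}}$. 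This yields
$$\int_{\mathbb{T}^3}\lvert\delta u(\xi;x,t)\rvert\,\lvert\delta\nabla u(\xi;x,t)\rvert^2\,dx\le C\lvert\xi\rvert^{2s-1}\lVert u\rVert_{B^s_{3,\infty}}^3,$$
so $\sigma_5(\lvert\xi\rvert)=C\lvert\xi\rvert^{2(s-1)}\to0$ as $\lvert\xi\rvert\to0$ precisely because $s>1$, while $C(t)=C\lVert u(\cdot,t)\rVert_{B^s_{3,\infty}}^3\in L^1(0,T)$ by hypothesis. Proposition \ref{sufficientmodified} now gives $D_4(u)=D_5(u)=0$.

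To finish I would copy the argument of Theorem \ref{conservationtheorem}. With $D_4=D_5=0$ the local energy balance of this section reduces, in the distributional sense, to $\partial_t(\lvert u\rvert^2+\alpha^2\lvert\nabla u\rvert^2)$ plus a sum of terms each of which is a full spatial divergence $\nabla\cdot(\cdots)$, or of the form $\partial_i(\cdots)$, or $\partial_t\partial_i(\cdots)$. Testing against $\psi_1(t)$ depending on time only (the same mollifier-in-time construction as in Theorem \ref{conservationtheorem}, equal to $1$ on $(t_1+\epsilon,t_2-\epsilon)$ and vanishing near $0$ and $T$), every spatial-divergence and $\partial_i$-type term integrates to zero over $\mathbb{T}^3$ by periodicity, leaving $\int_0^T\big(\int_{\mathbb{T}^3}(\lvert u\rvert^2+\alpha^2\lvert\nabla u\rvert^2)\,dx\big)\psi_1'(t)\,dt=0$; letting $\epsilon\to0$ and using the Lebesgue differentiation theorem gives $\lVert u(\cdot,t_1)\rVert_{H^1}=\lVert u(\cdot,t_2)\rVert_{H^1}$ for almost every $t_1,t_2\in(0,T)$.

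I expect no genuine obstacle here; the only point requiring care is the exponent bookkeeping in the $D_5$ bound. One must notice that it pays to extract a \emph{full} power $\lvert\xi\rvert$ from the single $\delta u$ factor (permissible because $\nabla u\in L^3$, itself a consequence of $s>1$) rather than the weaker Besov power $\lvert\xi\rvert^s$, since the two $\delta\nabla u$ factors together supply only $\lvert\xi\rvert^{2(s-1)}$; the threshold $s>1$ is then exactly what makes $\sigma_5=o(1)$ at the origin, mirroring how the equal distribution of one power of $\lvert\xi\rvert$ among three identical factors $\delta v$ produces the $1/3$ exponent for the Euler equations, as explained in Remark \ref{originremark}.
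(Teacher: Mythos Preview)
Your proposal is correct and follows essentially the same route as the paper: verify the two integral bounds of Proposition \ref{sufficientmodified} via the Besov increment inequality \eqref{besovinequality}, then invoke the local energy balance and the time-mollifier argument of Theorem \ref{conservationtheorem}. One minor difference is that for the $D_5$ bound the paper writes $\lvert\xi\rvert^{3s-2}$ (extracting $\lvert\xi\rvert^{s}$ from the $\delta u$ factor), whereas you extract a full $\lvert\xi\rvert$ from $\delta u$ via $\nabla u\in L^3$ to get $\lvert\xi\rvert^{2s-1}$; your bookkeeping matches what the paper itself does for the Euler-$\alpha$ case (Proposition \ref{suffcondeuler}), and both exponents exceed $1$ precisely when $s>1$, so the conclusion is the same.
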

\begin{proof}
We need to prove that the conditions of Proposition \ref{sufficientmodified} are satisfied. We observe that the first inequality in Proposition \ref{sufficientmodified} is already satisfied for any weak solution of the model. The second inequality can be observed to hold because
\begin{equation*}
\int_{\mathbb{T}^3} \lvert \delta u (\xi;x,t) \rvert \lvert \delta \nabla u (\xi;x,t) \rvert^2 dx \leq \lvert \xi \rvert^{3s - 2} \lVert u \rVert_{B^s_{3,\infty}}^3.
\end{equation*}
Note that in the above we have used inequality \eqref{besovinequality}.
\end{proof}
Now we can conclude that energy is indeed conserved.
\begin{theorem}
Let $u \in L^3 ((0,T); B^s_{3,\infty} (\mathbb{T}^3))$ with $s > 1$ be a weak solution of the modified Leray-$\alpha$ model, then the energy is conserved, which means that
\begin{equation*}
\lVert u(\cdot, t_1) \rVert_{H^1} = \lVert u(\cdot, t_2) \rVert_{H^1},
\end{equation*}
for almost all times $t_1, t_2 \in (0,T)$.
\end{theorem}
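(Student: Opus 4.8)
The plan is to follow the proof of Theorem~\ref{conservationtheorem}, now using the equation of local energy balance for the modified Leray-$\alpha$ model together with the vanishing of the defect terms. First I would note that, since $u$ is assumed to be a weak solution, it already lies in $L^\infty((0,T); H^1(\mathbb{T}^3))$, and that the hypothesis $u \in L^3((0,T); B^s_{3,\infty}(\mathbb{T}^3))$ with $s>1$ implies, via the torus embeddings $B^s_{3,\infty} \hookrightarrow B^1_{3,1} \hookrightarrow W^{1,3} \hookrightarrow W^{1,9/4}$, that $u \in L^3((0,T); W^{1,9/4}(\mathbb{T}^3))$; hence the equation of local energy balance proved above is available. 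Moreover, the preceding proposition gives $D_4(u) = D_5(u) = 0$ under exactly this Besov hypothesis, so the local energy balance reduces, in $\mathcal{D}'(\mathbb{T}^3\times(0,T))$, to
\begin{align*}
&\partial_t(|u|^2) + \alpha^2\partial_t(|\nabla u|^2) - 2\alpha^2\partial_t\partial_i\big(u_j\partial_i u_j\big) + 2\alpha^2\partial_i\big(\partial_t u_j\,\partial_i u_j\big) + 2\nabla\cdot(pu) \\
&\quad + \nabla\cdot(|u|^2 u) + 2\nabla\cdot\big(u_j\partial_k u_j\,\partial_k u\big) + \nabla\cdot\big(\partial_k(|u|^2\partial_k u)\big) = 0 .
\end{align*}

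Second, I would test this identity against functions $\chi \in \mathcal{D}(\mathbb{T}^3\times(0,T))$ that depend only on $t$. Every term above of the form $\nabla\cdot(\cdots)$, $\partial_i(\cdots)$, or $\partial_t\partial_i(\cdots)$ then vanishes, since pairing with a spatially constant $\chi$ moves the spatial derivative onto $\chi$, whose spatial gradient is zero (equivalently, each such term integrates to zero over $\mathbb{T}^3$ by periodicity). What remains is
\[
\frac{d}{dt}\int_{\mathbb{T}^3}\big(|u(x,t)|^2 + \alpha^2|\nabla u(x,t)|^2\big)\,dx = 0 \qquad \text{in } \mathcal{D}'((0,T)),
\]
which is meaningful because $|u|^2 + \alpha^2|\nabla u|^2 \in L^\infty((0,T); L^1(\mathbb{T}^3))$ as $u \in L^\infty((0,T); H^1(\mathbb{T}^3))$.

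Third, exactly as in the proof of Theorem~\ref{conservationtheorem}, for $t_1<t_2$ and $\epsilon$ small I would take the time-only test function
\[
\psi_1(t) = \int_0^t \big(\varphi_\epsilon(t'-t_1) - \varphi_\epsilon(t'-t_2)\big)\,dt',
\]
with $\varphi_\epsilon$ a standard mollifier supported in $[-\epsilon,\epsilon]$; this $\psi_1$ has compact support in $(0,T)$, equals $1$ on $(t_1+\epsilon,t_2-\epsilon)$ and vanishes off $(t_1-\epsilon,t_2+\epsilon)$. Substituting $\chi=\psi_1$ into the reduced identity and letting $\epsilon\to0$, the Lebesgue differentiation theorem yields $\int_{\mathbb{T}^3}(|u(x,t_1)|^2+\alpha^2|\nabla u(x,t_1)|^2)\,dx = \int_{\mathbb{T}^3}(|u(x,t_2)|^2+\alpha^2|\nabla u(x,t_2)|^2)\,dx$, that is $\lVert u(\cdot,t_1)\rVert_{H^1} = \lVert u(\cdot,t_2)\rVert_{H^1}$, for a.e. $t_1,t_2\in(0,T)$.

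There is no genuine obstacle here: the analytic content lives entirely in the already-established local energy balance and in the vanishing of $D_4,D_5$. The only point I would spell out carefully is that the two extra terms $-2\alpha^2\partial_t\partial_i(u_j\partial_i u_j)$ and $2\alpha^2\partial_i(\partial_t u_j\,\partial_i u_j)$ — which involve $\partial_t\nabla u$ and are not obviously harmless — really do drop out against a spatially constant $\chi$; but each is just a spatial derivative of a distribution on $\mathbb{T}^3\times(0,T)$, so the argument above applies, precisely as anticipated in the proof of the energy-balance theorem.
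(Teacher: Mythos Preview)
Your proposal is correct and follows essentially the same route as the paper, which simply states that the proof ``goes the same way as the proof of Theorem~\ref{conservationtheorem}.'' You have spelled out more detail than the paper does---in particular the embedding $B^s_{3,\infty} \hookrightarrow W^{1,9/4}$ needed to invoke the local energy balance, and the observation (already flagged in the Euler-$\alpha$ section) that the extra $\partial_i(\cdots)$ and $\partial_t\partial_i(\cdots)$ terms vanish against spatially constant test functions---but the argument is the intended one.
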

\begin{proof}
The proof goes the same way as the proof of Theorem \ref{conservationtheorem}.
\end{proof}
\section{The Clark-$\alpha$ model} \label{clarksection}
The equations for the Clark-$\alpha$ model are
\begin{align*}
\partial_t v  +  \nabla \cdot (u \otimes v) &+ \nabla \cdot (v \otimes u) - \nabla \cdot (u \otimes u) - \alpha^2 \nabla \cdot (\nabla u \cdot \nabla u^T ) + \nabla p = 0, \nonumber \\
&v = u - \alpha^2 \Delta u, \quad \nabla \cdot u = \nabla \cdot v = 0.
\end{align*}
The conserved quantity is the norm $\lVert u \rVert_{H^1}$.
\begin{definition}
We call $u \in L^\infty ((0,T); H^1 (\mathbb{T}^3))$ and $p \in L^\infty ((0,T); L^1 (\mathbb{T}^3))$ a weak solution of the Clark-$\alpha$ model if for all $\psi \in \mathcal{D} (\mathbb{T}^3 \times (0,T); \mathbb{R}^3)$ and $\chi \in \mathcal{D} (\mathbb{T}^3 \times (0,T); \mathbb{R})$ satisfies the following equations
\begingroup
\allowdisplaybreaks
\begin{align}
&\int_0^T \int_{\mathbb{T}^3 } u \cdot \partial_t \psi dx dt + \alpha^2 \int_0^T \int_{\mathbb{T}^3 } \nabla u : \nabla \partial_t \psi dx dt + \int_0^T \int_{\mathbb{T}^3 } (u \otimes u) : \nabla \psi dx dt \nonumber \\
&+2\alpha^2 \int_0^T \int_{\mathbb{T}^3} \partial_k u \otimes \partial_k u : \nabla \psi dx dt + \alpha^2 \int_0^T \int_{\mathbb{T}^3} u \otimes \partial_k u: \nabla \partial_k \psi dx dt  \nonumber \\
& + \alpha^2 \int_0^T \int_{\mathbb{T}^3} \partial_k u \otimes u : \nabla \partial_k  \psi dx dt - \alpha^2 \int_0^T \int_{\mathbb{T}^3} \nabla u \cdot (\nabla u)^T : \nabla \psi dx dt \nonumber \\
& + \int_0^T \int_{\mathbb{T}^3} p \nabla \cdot \psi dx dt = 0, \nonumber \\
&\int_0^T \int_{\mathbb{T}^3} u_i \partial_i \chi dx dt = 0. \nonumber
\end{align}
\endgroup
The pressure is determined by
\begin{equation*}
\int_{\mathbb{T}^3} p(x,t) dx = 0.
\end{equation*}
\end{definition}
Now we extend the space of test functions.
\begin{lemma} \label{genclark}
The weak formulation of the Clark-$\alpha$ model still holds for test functions \\  $\psi \in W^{1,1}_0 ((0,T);  H^1 (\mathbb{T}^3)) \cap L^1 ((0,T); H^3 (\mathbb{T}^3))$.
\end{lemma}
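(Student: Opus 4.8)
The plan is to run exactly the density argument that proved Lemma~\ref{generaltestfunctions}, Lemma~\ref{euleralphagentest} and Lemma~\ref{gentestfuncmod}, being slightly more careful because the Clark-$\alpha$ weak formulation contains $\alpha^2$-terms carrying two spatial derivatives on the test function. Concretely, given $\psi \in W^{1,1}_0((0,T); H^1(\mathbb{T}^3)) \cap L^1((0,T); H^3(\mathbb{T}^3))$, I would first construct a sequence $\psi_m \in \mathcal{D}(\mathbb{T}^3 \times (0,T); \mathbb{R}^3)$ with $\psi_m \to \psi$ \emph{simultaneously} in $W^{1,1}_0((0,T); H^1(\mathbb{T}^3))$ and in $L^1((0,T); H^3(\mathbb{T}^3))$. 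This is standard: since the time-trace of $\psi$ vanishes, its extension by zero to $t \in \mathbb{R}$ lies in $W^{1,1}(\mathbb{R}; H^1) \cap L^1(\mathbb{R}; H^3)$; one then mollifies in time (after a slight rescaling of the time interval to keep the support compactly inside $(0,T)$) and mollifies in space by periodic convolution, which is $C^\infty$-regularising in $x$ and converges on every $H^s$-scale at once. For each $\psi_m$ the weak identity of the Clark-$\alpha$ model holds by definition.

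Next I would pass to the limit $m \to \infty$ in each of the eight terms of the weak formulation that involve the vector test function $\psi$ (the divergence-free identity for $u$ involves only the scalar $\chi$ and is unchanged). The tools are H\"older's inequality, the three-dimensional Sobolev embeddings $H^1(\mathbb{T}^3) \hookrightarrow L^6(\mathbb{T}^3)$ and $H^3(\mathbb{T}^3) \hookrightarrow C^1(\mathbb{T}^3) \cap W^{2,6}(\mathbb{T}^3)$, and the regularity already available from the definition, $u \in L^\infty((0,T); H^1(\mathbb{T}^3))$ and $p \in L^\infty((0,T); L^1(\mathbb{T}^3))$. The time-derivative terms $\int u \cdot \partial_t \psi_m$ and $\alpha^2 \int \nabla u : \nabla \partial_t \psi_m$ converge by pairing $L^\infty_t L^2_x$ with $L^1_t L^2_x$, the latter convergence coming from $\psi_m \to \psi$ in $W^{1,1}_0((0,T);H^1)$. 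The cubic term $\int (u\otimes u):\nabla\psi_m$ and the quadratic-gradient terms $\alpha^2\int \partial_k u \otimes \partial_k u : \nabla \psi_m$ and $-\alpha^2\int \nabla u \cdot (\nabla u)^T : \nabla \psi_m$ converge by pairing $L^\infty_t L^3_x$ resp.\ $L^\infty_t L^1_x$ with $L^1_t L^\infty_x$, using $H^3 \hookrightarrow W^{1,\infty}$; the same embedding handles the pressure term $\int p\, \nabla\cdot\psi_m$. Finally, the two $\alpha^2$-terms carrying $\nabla\partial_k\psi$, namely $\alpha^2\int u\otimes\partial_k u:\nabla\partial_k\psi_m$ and $\alpha^2\int\partial_k u\otimes u:\nabla\partial_k\psi_m$, converge by pairing $u\otimes\partial_k u \in L^\infty_t L^{3/2}_x$ (from $L^6_x\cdot L^2_x$) with $\nabla^2\psi_m \in L^1_t L^3_x$, the latter controlled by $\psi_m\to\psi$ in $L^1((0,T);H^3)$ via $H^3\hookrightarrow W^{2,3}$. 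In each case one factor is fixed and the other converges in the dual norm, so the pairing converges; summing, the weak identity holds for $\psi$.

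I do not expect a genuine obstacle here — the argument is pure bookkeeping — but the delicate point, and the reason the hypothesis is $W^{1,1}_0((0,T);H^1)\cap L^1((0,T);H^3)$ rather than the weaker $W^{1,1}_0((0,T);H^1)$ that sufficed in Lemma~\ref{generaltestfunctions}, is precisely the pair of $\alpha^2$-terms containing $\nabla\partial_k\psi$: controlling $\nabla^2\psi$ requires $\psi\in L^1((0,T);H^2)$ at the very least, and one wants the full $H^3$-convergence both to have room in the H\"older exponents and to dispatch the $\nabla u\cdot(\nabla u)^T$ term (only $L^1$ in space) against an $L^\infty$ test-function gradient. It is worth spelling out in the write-up that the approximating sequence can indeed be chosen to converge in both norms simultaneously and to be compactly supported in time; everything else is a one-line H\"older estimate per term.
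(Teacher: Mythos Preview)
Your proposal is correct and follows exactly the density argument the paper invokes; the paper's own proof is the single sentence ``The proof is the same as for the other models considered in this paper,'' so you have simply unpacked that reference and checked each term explicitly. Your identification of why the $L^1((0,T);H^3)$ hypothesis is needed---namely to control the $\nabla\partial_k\psi$ terms and to pair the $L^1_x$ quantities $\partial_k u\otimes\partial_k u$, $\nabla u\cdot(\nabla u)^T$, $p$ against $\nabla\psi\in L^\infty_x$---is accurate and more informative than what the paper records.
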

\begin{proof}
The proof is the same as for the other models considered in this paper.
\end{proof}
Now we establish the equation of local energy balance.
\begin{theorem}
Let $u$ be a weak solution of the Clark-$\alpha$ model such that $u \in L^3 ((0,T); \linebreak W^{1,9/4} (\mathbb{T}^3))$. Then the following equation of local energy balance holds
\begin{align*}
&\partial_t (\lvert u \rvert^2) + \alpha^2 \partial_t (\lvert \nabla u \rvert^2) - 2 \alpha^2 \partial_t \partial_i \bigg( u_j \partial_i u_j \bigg) + 2 \alpha^2\partial_i \bigg( \partial_t u_j \partial_i u_j  \bigg)   + \nabla \cdot ( \lvert u \rvert^2 u) \\
&+ 2 \alpha^2 \partial_i ( u_j \partial_k u_j \partial_k u_i) + \alpha^2 \partial_i \partial_k (u_j \partial_k u_i u_j) + 3\alpha^2 \partial_k (\partial_i u_j  \partial_k u_i u_j) + \bigg[D_{6} (u) + \alpha^2 D_{7} (u) \\
&+ \frac{1}{2} \alpha^2 D_{8} (u)\bigg] + 2\nabla \cdot (p u ) = 0,
\end{align*}
which holds in the sense of distributions. The defect terms are given by
\begingroup
\allowdisplaybreaks
\begin{align*}
&\bigg[D_{6} (u) + \alpha^2 D_{7} (u) + \frac{1}{2} \alpha^2 D_{8} (u)\bigg] (x,t) \coloneqq \lim_{\epsilon \rightarrow 0} \bigg[ \frac{1}{2} \int_{\mathbb{R}^3} \nabla_\xi \phi_\epsilon (\xi) \cdot \delta u(\xi;x,t) \lvert \delta u (\xi ;x,t) \lvert^2 d \xi \\
&+ \alpha^2 \int_{\mathbb{R}^3} \partial_i \phi_\epsilon (\xi) \delta u_j (\xi;x,t) \delta \partial_k u_i (\xi;x,t) \delta \partial_k u_j (\xi;x,t) d \xi \\
&+ \frac{1}{2} \alpha^2 \int_{\mathbb{R}^3} \partial_i \phi_\epsilon (\xi) \delta u_i (\xi ; x,t) \lvert \delta \partial_k u_j (\xi ;x,t) \rvert^2 d \xi \bigg],
\end{align*}
\endgroup
where the limit holds in the sense of distributions and is independent of the choice of mollifier. Note that $\bigg[D_{6} (u) + \alpha^2 D_{7} (u) + \frac{1}{2} \alpha^2 D_{8} (u)\bigg]$ should be regarded as a single distribution, but we will write it as a sum of the defect terms $D_6, D_7$ and $D_8$ (which formally correspond to the limits of the different integrals) for later notational convenience.
\end{theorem}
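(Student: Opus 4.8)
\emph{Proof plan.} The proof follows the same scheme as the energy-balance theorems already established for the Leray-$\alpha$ model (Theorem~\ref{energyequationtheorem}) and the Euler-$\alpha$ equations; only the bookkeeping of the nonlinear terms changes. First I would mollify the Clark-$\alpha$ equation in space by convolution with $\phi_\epsilon$; since all spatial derivatives then fall on smooth mollified quantities, the right-hand side lies in $L^\infty((0,T); C^\infty(\mathbb{T}^3))$, which upgrades the time regularity and gives $u^\epsilon, v^\epsilon \in W^{1,\infty}((0,T); C^\infty(\mathbb{T}^3))$. Before the main computation I would record the auxiliary regularity bought by the hypothesis $u \in L^3((0,T); W^{1,9/4}(\mathbb{T}^3))$: from the distributional pressure equation associated with the weak formulation, elliptic regularity gives $p \in L^{3/2}((0,T); L^{9/8}(\mathbb{T}^3))$, while reading $\partial_t v$ off the equation yields $\partial_t v \in L^{3/2}((0,T); W^{-2,9/5}(\mathbb{T}^3))$, hence $\partial_t u \in L^{3/2}((0,T); L^{9/5}(\mathbb{T}^3))$. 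The Sobolev embedding $W^{1,9/4}(\mathbb{T}^3) \subset L^9(\mathbb{T}^3)$ makes every cubic product that appears integrable; in particular $\lvert \delta u \rvert\, \lvert \delta \nabla u \rvert^2 \in L^\infty((0,T); L^1(\mathbb{T}^3))$ since $\tfrac19 + \tfrac49 + \tfrac49 = 1$, which is precisely the borderline case forcing the exponent $9/4$.

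Next, by Lemma~\ref{genclark} the function $u^\epsilon \chi$, with $\chi \in \mathcal{D}(\mathbb{T}^3 \times (0,T); \mathbb{R})$, is an admissible test function. I would subtract from the weak formulation tested against $u^\epsilon\chi$ the mollified Clark-$\alpha$ equation multiplied by $u\chi$ and integrated; the resulting identity splits into three groups. The time-derivative terms combine, after an integration by parts in $t$ and exactly as in the Euler-$\alpha$ case, into a distribution that converges as $\epsilon \to 0$ to $\partial_t(\lvert u\rvert^2) + \alpha^2\partial_t(\lvert\nabla u\rvert^2) - 2\alpha^2\partial_t\partial_i(u_j\partial_i u_j) + 2\alpha^2\partial_i(\partial_t u_j\partial_i u_j)$ tested against $\chi$; and the pressure terms combine into $-\langle\nabla\cdot(p u^\epsilon + p^\epsilon u), \chi\rangle$, which converges to $-2\langle\nabla\cdot(pu), \chi\rangle$.

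The heart of the proof is the third group, the advective terms: those cubic in $u$ coming from the $\nabla\cdot(u\otimes u)$ contribution, and those cubic in $\nabla u$ (both the pure and the mixed ones) coming from the terms the weak formulation produces out of $-\alpha^2\nabla\cdot(u\otimes\Delta u)$, $-\alpha^2\nabla\cdot(\Delta u\otimes u)$ and, crucially, the Clark-specific term $-\alpha^2\nabla\cdot(\nabla u\cdot\nabla u^T)$. For each of these I would apply the Duchon--Robert type commutator identity \cite{duchon} used in Theorem~\ref{energyequationtheorem}, introducing three $\epsilon$-level defect terms, $D_{6,\epsilon}(u) = \tfrac12\int_{\mathbb{R}^3}\nabla_\xi\phi_\epsilon(\xi)\cdot\delta u\,\lvert\delta u\rvert^2\,d\xi$, $D_{7,\epsilon}(u) = \int_{\mathbb{R}^3}\partial_i\phi_\epsilon(\xi)\,\delta u_j\,\delta\partial_k u_i\,\delta\partial_k u_j\,d\xi$ and $D_{8,\epsilon}(u) = \tfrac12\int_{\mathbb{R}^3}\partial_i\phi_\epsilon(\xi)\,\delta u_i\,\lvert\delta\partial_k u_j\rvert^2\,d\xi$, each defined so that the difference between a mollified cubic term and the corresponding product of mollified factors equals an explicit divergence plus the defect. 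Collecting everything, one is left with precisely the divergence terms $2\alpha^2\partial_i(u_j\partial_k u_j\partial_k u_i)$, $\alpha^2\partial_i\partial_k(u_j\partial_k u_i u_j)$, $3\alpha^2\partial_k(\partial_i u_j\partial_k u_i u_j)$ alongside the sum $[D_{6,\epsilon}(u) + \alpha^2 D_{7,\epsilon}(u) + \tfrac12\alpha^2 D_{8,\epsilon}(u)]$.

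Finally I would let $\epsilon\to0$ in $\mathcal{D}'$. As in the Leray-$\alpha$ proof, the only subtlety is that commutator remainders such as $(\lvert u\rvert^2 u)^\epsilon - (\lvert u\rvert^2)^\epsilon u$ and their $\nabla u$-analogues tend to $0$ in $L^1_{\mathrm{loc}}$, which follows from the $L^\infty$, $L^9$, $L^{9/4}$ bounds on the factors and the $L^1$-continuity of mollification; all other non-defect terms converge to the claimed ones. Since every non-defect term has a mollifier-independent distributional limit, the combination $[D_6(u) + \alpha^2 D_7(u) + \tfrac12\alpha^2 D_8(u)]$, defined as the limit of $[D_{6,\epsilon}(u) + \alpha^2 D_{7,\epsilon}(u) + \tfrac12\alpha^2 D_{8,\epsilon}(u)]$, exists as a distribution and is mollifier-independent, which is why it must be read as a single object. (Afterwards, as in the Euler-$\alpha$ and modified Leray-$\alpha$ cases, the Besov embedding $H^1(\mathbb{T}^3)\subset B^{1/2}_{3,\infty}(\mathbb{T}^3)$ gives $D_6(u)=0$ for every weak solution, so that $D_7(u)$ and $D_8(u)$ separately make sense as distributions; this is not needed for the present statement.) I expect the main obstacle to be the third step: integrating by parts the many contributions correctly — in particular those generated by $-\alpha^2\nabla\cdot(\nabla u\cdot\nabla u^T)$, the feature distinguishing Clark-$\alpha$ from the modified Leray-$\alpha$ model — and verifying that after rearrangement exactly those three divergence terms survive alongside the three defect terms; the rest is a repetition of arguments already carried out for the Leray-$\alpha$ and Euler-$\alpha$ models.
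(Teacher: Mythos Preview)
Your plan is correct and follows essentially the same route as the paper: mollify, use $u^\epsilon\chi$ as a test function via Lemma~\ref{genclark}, subtract the mollified equation multiplied by $u\chi$, split into time/pressure/advective blocks, introduce the three $\epsilon$-level defect terms, and pass to the limit using that all non-defect terms converge to mollifier-independent limits. One small normalization slip: your definition of $D_{8,\epsilon}$ carries an extra factor of $\tfrac12$ relative to the statement (and to the paper), so with your convention the coefficient in the combination should be $\alpha^2$ rather than $\tfrac12\alpha^2$; otherwise the outline matches the paper's proof.
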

\begin{proof}
We observe that $u^\epsilon, v^\epsilon \in L^\infty ((0,T); C^\infty (\mathbb{T}^3))$. Moreover, we get that $\partial_t v^\epsilon, \partial_t u^\epsilon \in L^\infty ((0,T); C^\infty (\mathbb{T}^3))$ by mollifying the weak formulation. Then by applying Lemma \ref{genclark} and taking the general test function $u^\epsilon \chi$, we get that
\begingroup
\allowdisplaybreaks
\begin{align*}
&\int_0^T \int_{\mathbb{T}^3 } u \cdot \partial_t (u^\epsilon \chi) dx dt + \alpha^2 \int_0^T \int_{\mathbb{T}^3 } \nabla u : \nabla \partial_t (u^\epsilon \chi) dx dt + \int_0^T \int_{\mathbb{T}^3 } (u \otimes u) : \nabla (u^\epsilon \chi) dx dt  \\
&+2\alpha^2 \int_0^T \int_{\mathbb{T}^3} \partial_k u \otimes \partial_k u : \nabla (u^\epsilon \chi) dx dt + \alpha^2 \int_0^T \int_{\mathbb{T}^3} u \otimes \partial_k u: \nabla \partial_k (u^\epsilon \chi) dx dt   \\
& + \alpha^2 \int_0^T \int_{\mathbb{T}^3} \partial_k u \otimes u : \nabla \partial_k  (u^\epsilon \chi) dx dt - \alpha^2 \int_0^T \int_{\mathbb{T}^3} \nabla u \cdot (\nabla u)^T : \nabla (u^\epsilon \chi) dx dt  \\
& + \int_0^T \int_{\mathbb{T}^3} p \nabla \cdot (u^\epsilon \chi) dx dt = 0.
\end{align*}
\endgroup
Then we mollify the equation of the Clark-$\alpha$ model, multiply it by $u \chi$ and subtract it from the previous expression, which gives
\begingroup
\allowdisplaybreaks
\begin{align*}
&\int_0^T \int_{\mathbb{T}^3 } u \cdot \partial_t (u^\epsilon \chi) dx dt - \int_0^T \int_{\mathbb{T}^3 } u \chi \cdot \partial_t u^\epsilon dx dt + \alpha^2 \int_0^T \int_{\mathbb{T}^3 } \nabla u : \nabla \partial_t (u^\epsilon \chi) dx dt \\
&- \alpha^2 \int_0^T \int_{\mathbb{T}^3 } \nabla (u \chi) : \nabla \partial_t u^\epsilon dx dt + \int_0^T \int_{\mathbb{T}^3 } (u \otimes u) : \nabla (u^\epsilon \chi) dx dt - \int_0^T \int_{\mathbb{T}^3} u \chi \cdot (\nabla \cdot (u \otimes u)^\epsilon) dx dt \\
&+2\alpha^2 \int_0^T \int_{\mathbb{T}^3} \partial_k u \otimes \partial_k u : \nabla (u^\epsilon \chi) dx dt - 2 \alpha^2 \int_0^T \int_{\mathbb{T}^3} u^\epsilon \chi \cdot (\nabla \cdot (\partial_k u \otimes \partial_k u)^\epsilon ) dx dt \\
&+ \alpha^2 \int_0^T \int_{\mathbb{T}^3} u \otimes \partial_k u: \nabla \partial_k (u^\epsilon \chi) dx dt - \alpha^2 \int_0^T \int_{\mathbb{T}^3} \partial_k (u \chi) \cdot (\nabla \cdot (u \otimes \partial_k u)^\epsilon ) dx dt  \\
& + \alpha^2 \int_0^T \int_{\mathbb{T}^3} \partial_k u \otimes u : \nabla \partial_k  (u^\epsilon \chi) dx dt - \alpha^2 \int_0^T \int_{\mathbb{T}^3} \partial_k (u \chi) \cdot (\nabla \cdot (\partial_k u \otimes u)^\epsilon) dx dt \\
&- \alpha^2 \int_0^T \int_{\mathbb{T}^3} \nabla u \cdot (\nabla u)^T : \nabla (u^\epsilon \chi) dx dt + \alpha^2 \int_0^T \int_{\mathbb{T}^3} u \chi \cdot (\nabla \cdot (\nabla u \cdot (\nabla u )^T )^\epsilon ) dx dt \\
& + \int_0^T \int_{\mathbb{T}^3} p \nabla \cdot (u^\epsilon \chi) dx dt - \int_0^T \int_{\mathbb{T}^3} u \chi \cdot \nabla p^\epsilon dx dt = 0.
\end{align*}
\endgroup
First we look at the time derivative parts, for which we have that
\begingroup
\allowdisplaybreaks
\begin{align*}
&\int_0^T \int_{\mathbb{T}^3 } u \partial_t (u^\epsilon \chi) dx dt - \int_0^T \int_{\mathbb{T}^3} u \chi \partial_t (u^\epsilon) dx dt + \alpha^2 \int_0^T \int_{\mathbb{T}^3 } \nabla u : \nabla \partial_t (u^\epsilon \chi) dx dt  \\
&- \alpha^2 \int_0^T \int_{\mathbb{T}^3} \nabla (u \chi) :  \partial_t \nabla u^\epsilon dx dt = - \bigg\langle \partial_t ( u \cdot u^\epsilon) + \alpha^2 \partial_t (\nabla u : \nabla u^\epsilon) - \alpha^2 \partial_t \partial_i \bigg( u_j^\epsilon \partial_i u_j + u_j \partial_i u_j^\epsilon \bigg) \\
&+ \alpha^2\partial_i \bigg( \partial_t u_j^\epsilon \partial_i u_j + \partial_t u_j \partial_i u_j^\epsilon \bigg) , \chi \bigg\rangle.
\end{align*}
\endgroup
We note that under the assumption that $u \in L^3 ((0,T); W^{1,9/4} (\mathbb{T}^3))$, we can prove that $p \in L^{3/2} ((0,T); L^{9/8} (\mathbb{T}^3))$ and $\partial_t v \in L^{3/2} ((0,T); W^{-1,9/8} (\mathbb{T}^3))$ and hence $\partial_t u \in L^{3/2} ((0,T); W^{1,9/8} (\mathbb{T}^3)) \subset L^3 ((0,T); L^{9/5} (\mathbb{T}^3))$. This means that $\partial_t u_j \partial_i u_j \in L^1 ((0,T); L^1 (\mathbb{T}^3))$. In the limit as $\epsilon \rightarrow 0$ this is the same as
\begin{equation*}
- \bigg\langle \partial_t ( \lvert u \rvert^2 ) + \alpha^2 \partial_t (\lvert \nabla u \rvert^2) + 2 \alpha^2 \partial_t \partial_i \bigg( u_j \partial_i u_j\bigg) + 2 \alpha^2\partial_i \bigg( \partial_t u_j \partial_i u_j \bigg), \chi \bigg\rangle.
\end{equation*}
The first part of the advective terms can be dealt with by introducing a defect term
\begin{equation*}
D_{6,\epsilon} (u) (x,t) \coloneqq \frac{1}{2} \int_{\mathbb{R}^3} \nabla_\xi \phi_\epsilon (\xi) \cdot \delta u(\xi;x,t) \lvert \delta u (\xi ;x,t) \lvert^2 d \xi,
\end{equation*}
As before we can write that
\begingroup
\allowdisplaybreaks
\begin{align*}
&\int_0^T \int_{\mathbb{T}^3 } (u \otimes u) : \nabla (u^\epsilon \chi) dx dt - \int_0^T \int_{\mathbb{T}^3 }  \chi u \cdot  (\nabla \cdot (u \otimes u)^\epsilon) dx dt \\
&= \int_0^T \int_{\mathbb{T}^3} \bigg[- \chi D_{6, \epsilon} (u) (x,t) - \frac{1}{2} (\lvert u \rvert^2 u)^\epsilon \cdot \nabla \chi + \frac{1}{2} u (\lvert u \rvert^2)^\epsilon \cdot \nabla \chi \bigg] dx dt \\
&+ \int_0^T \int_{\mathbb{T}^3 } (u \otimes u) : \nabla \chi \otimes u^\epsilon dx dt.
\end{align*}
\endgroup
Next we look at the terms coming from the advective term $\nabla \cdot (v \otimes u)$, we have that
\begingroup
\allowdisplaybreaks
\begin{align*}
&\alpha^2 \int_0^T \int_{\mathbb{T}^3} \partial_k u \otimes \partial_k u : \nabla (u^\epsilon \chi) dx dt -  \alpha^2 \int_0^T \int_{\mathbb{T}^3} \chi u^\epsilon \cdot (\nabla \cdot (\partial_k u \otimes \partial_k u)^\epsilon ) dx dt \\
& + \alpha^2 \int_0^T \int_{\mathbb{T}^3} \partial_k u \otimes u : \nabla \partial_k  (u^\epsilon \chi) dx dt - \alpha^2 \int_0^T \int_{\mathbb{T}^3} \partial_k (u^\epsilon \chi) \cdot (\nabla \cdot (\partial_k u \otimes u)^\epsilon) dx dt \\
&= \alpha^2 \int_0^T \int_{\mathbb{T}^3} \chi \bigg[ - D_{7,\epsilon} (u) - \partial_i ( u_j \partial_k u_j \partial_k u_i)^\epsilon + \partial_k u_i \partial_i (u_j \partial_k u_j)^\epsilon \bigg] dx dt \\
&+ \alpha^2 \int_0^T \int_{\mathbb{T}^3} \bigg[ \partial_k u \otimes \partial_k u : \nabla \chi \otimes u^\epsilon + \partial_k u \otimes u : \bigg( \nabla (u^\epsilon \partial_k \chi) + \nabla \chi \otimes \partial_k u^\epsilon \bigg) \\
&- \partial_k \chi u^\epsilon \cdot (\nabla \cdot (\partial_k u \otimes u)^\epsilon) \bigg]dx dt = - \alpha^2 \bigg\langle D_{7,\epsilon} (u) - \nabla \cdot (u_j \partial_k u_j \partial_k u)^\epsilon + \nabla \cdot (\partial_k u (u_j \partial_k u_j)^\epsilon ), \chi \bigg\rangle \\
&+ \alpha^2 \int_0^T \int_{\mathbb{T}^3} \bigg[ \partial_k u \otimes \partial_k u : \nabla \chi \otimes u^\epsilon + \partial_k u \otimes u : \bigg( \nabla (u^\epsilon \partial_k \chi) + \nabla \chi \otimes \partial_k u^\epsilon \bigg) \\
&- \partial_k \chi u^\epsilon \cdot (\nabla \cdot (\partial_k u \otimes u)^\epsilon) \bigg]dx dt.
\end{align*}
\endgroup
In the above we have introduced the following defect term
\begin{equation*}
D_{7,\epsilon} (u) (x,t) = \int_{\mathbb{R}^3} \partial_i \phi_\epsilon (\xi) \delta u_j (\xi;x,t) \delta \partial_k u_i (\xi;x,t) \delta \partial_k u_j (\xi;x,t) d \xi.
\end{equation*}
Finally, we are left with the remaining terms
\begingroup
\allowdisplaybreaks
\begin{align*}
&\alpha^2 \int_0^T \int_{\mathbb{T}^3} \partial_k u_i \partial_k u_j  \partial_i (u^\epsilon_j \chi) dx dt - \alpha^2 \int_0^T \int_{\mathbb{T}^3} u_j \chi  \partial_i (\partial_k u_i  \partial_k u_j)^\epsilon dx dt \\
&+ \alpha^2 \int_0^T \int_{\mathbb{T}^3} u_i \partial_k u_j \partial_i \partial_k (u^\epsilon_j \chi) dx dt - \alpha^2 \int_0^T \int_{\mathbb{T}^3} \partial_k (u_j \chi)  \partial_i  (u_i  \partial_k u_j)^\epsilon  dx dt  \\
&- \alpha^2 \int_0^T \int_{\mathbb{T}^3} \partial_k u_i  \partial_k u_j \partial_i (u^\epsilon_j \chi) dx dt + \alpha^2 \int_0^T \int_{\mathbb{T}^3} u_j \chi  \partial_i (\partial_k u_i  \partial_k u_j )^\epsilon  dx dt \\
&= \alpha^2 \int_0^T \int_{\mathbb{T}^3} \bigg[ u_i \partial_k u_j \partial_i  (u^\epsilon_j \partial_k \chi) + u_i \partial_k u_j \partial_k  u^\epsilon_j \partial_i \chi + \partial_k \partial_i  \chi u_j   (u_i  \partial_k u_j)^\epsilon + \partial_k  \chi \partial_i u_j  (u_i  \partial_k u_j)^\epsilon \bigg] dx dt \\
&+ \alpha^2 \int_0^T \int_{\mathbb{T}^3} \chi \bigg[ - \frac{1}{2} D_{8,\epsilon} (u)  - \frac{1}{2} \partial_i (u_i \partial_k u_j \partial_k u_j )^\epsilon + \frac{1}{2} u_i \partial_i (\partial_k u_j \partial_k u_j)^\epsilon \bigg]dx dt.
\end{align*}
\endgroup
In the above we have introduced the defect term given by
\begin{align*}
D_{8,\epsilon} (x,t) &= \int_{\mathbb{R}^3} \partial_i \phi_\epsilon (\xi) \delta u_i (\xi ; x,t) \lvert \delta \partial_k u_j (\xi ;x,t) \rvert^2 d \xi = - \partial_i (u_i \partial_k u_j \partial_k u_j)^\epsilon \\
&+ u_i \partial_i (\partial_k u_j \partial_k u_j)^\epsilon + 2 \partial_k u_j \partial_i (u_i \partial_k u_j)^\epsilon - 2u_i \partial_k u_j \partial_i \partial_k u_j^\epsilon. \nonumber
\end{align*}
One can write the pressure terms as
\begin{align*}
&\int_0^T \int_{\mathbb{T}^3} p \nabla \cdot (u^\epsilon \chi) dx dt - \int_0^T \int_{\mathbb{T}^3} u \chi \cdot \nabla p^\epsilon dx dt = - \bigg\langle \nabla \cdot (p u^\epsilon + p^\epsilon u ), \chi \bigg\rangle.
\end{align*}
Finally, this allows us to write down the local equation of energy
\begingroup
\allowdisplaybreaks
\begin{align*}
&\bigg\langle \partial_t ( u \cdot u^\epsilon) + \alpha^2 \partial_t (\nabla u : \nabla u^\epsilon) - \alpha^2 \partial_t \partial_i \bigg( u_j^\epsilon \partial_i u_j + u_j \partial_i u_j^\epsilon \bigg) + \alpha^2\partial_i \bigg( \partial_t u_j^\epsilon \partial_i u_j + \partial_t u_j \partial_i u_j^\epsilon \bigg) \\
&+ D_{6,\epsilon} (u) + \frac{1}{2} \nabla \cdot \big( (\lvert u \rvert^2)^\epsilon u - (\lvert u \rvert^2 u)^\epsilon \big) + \nabla \cdot ( \lvert u \rvert^2 u^\epsilon) + \alpha^2 D_{7,\epsilon} (u) - \alpha^2 \nabla \cdot (u_j \partial_k u_j \partial_k u)^\epsilon \\
&+ \alpha^2 \nabla \cdot (\partial_k u (u_j \partial_k u_j)^\epsilon ) + \alpha^2 \partial_i ( u^\epsilon_j \partial_k u_j \partial_k u_i) + \alpha^2 \partial_i \partial_k (u^\epsilon_j \partial_k u_i u_j) + \alpha^2 \partial_k (\partial_i u^\epsilon_j  \partial_k u_i u_j) \\
&+ \alpha^2 \partial_k  (\partial_i u^\epsilon_j \partial_k u_i u_j) + \alpha^2 \partial_i \partial_k (u_j  (\partial_k u_i u_j)^\epsilon ) + \alpha^2 \partial_k (\partial_iu_j  (\partial_k u_i u_j)^\epsilon ) + \frac{1}{2} \alpha^2 D_{8,\epsilon} (u) \\
&+ \frac{1}{2} \alpha^2 \partial_i (u_i \partial_k u_j \partial_k u_j)^\epsilon - \frac{1}{2} \alpha^2 \partial_i (u_i (\partial_k u_j \partial_k u_j)^\epsilon )+ \nabla \cdot (p u^\epsilon + p^\epsilon u ), \chi \bigg\rangle = 0.
\end{align*}
\endgroup
As before, we know that $D_{6,\epsilon} (u), D_{7,\epsilon} (u) $ and $D_{8,\epsilon} (u)$ all make sense for fixed $\epsilon > 0$ as an element in $L^1 ((0,T); L^1 (\mathbb{T}^3))$ due to the assumption that $u \in L^3 ((0,T); W^{1,9/4} (\mathbb{T}^3))$. We can write the above equation as an equation holding in the sense of distributions for $[D_{6,\epsilon} (u) + \alpha^2 D_{7,\epsilon} (u) + \frac{1}{2} \alpha^2 D_{8,\epsilon} (u)]$. Now we observe that all the other terms in the equation converge in the sense of distributions as $\epsilon \rightarrow 0$ and therefore by arguing as before $\lim_{\epsilon \rightarrow 0} \big[ D_{6,\epsilon} (u) + \alpha^2 D_{7,\epsilon} (u) + \frac{1}{2} \alpha^2 D_{8,\epsilon} (u) \big]$ makes sense as a distribution. We will denote this limit, abusing notation, by $[D_6 (u) + \alpha^2 D_7 (u) + \frac{1}{2} \alpha^2 D_8 (u)]$, even though we have not shown at this stage that each individual defect term is converging. Later, we will show that under more restricted condition that each defect term converges to 0. In particular, from above we arrive at the following equation of local energy balance
\begingroup
\allowdisplaybreaks
\begin{align*}
&\bigg\langle \partial_t (\lvert u \rvert^2) + \alpha^2 \partial_t (\lvert \nabla u \rvert^2) - 2 \alpha^2 \partial_t \partial_i \bigg( u_j \partial_i u_j \bigg) + 2 \alpha^2\partial_i \bigg( \partial_t u_j \partial_i u_j  \bigg) + \nabla \cdot ( \lvert u \rvert^2 u) \\
&  + 2 \alpha^2 \partial_i ( u_j \partial_k u_j \partial_k u_i) + \alpha^2 \partial_i \partial_k (u_j \partial_k u_i u_j) + 3\alpha^2 \partial_k (\partial_i u_j  \partial_k u_i u_j) \\
& + \bigg[D_{6} (u) + \alpha^2 D_{7} (u) + \frac{1}{2} \alpha^2 D_{8} (u) \bigg]  + 2\nabla \cdot (p u ), \chi \bigg\rangle = 0.
\end{align*}
\endgroup
\end{proof}
We now state the usual sufficient condition for the defect terms to be zero.
\begin{proposition} \label{zerodefectclark}
Let $u$ be a weak solution of the inviscid Clark-$\alpha$ model such that
\begin{align*}
&\int_{\mathbb{T}^3} \lvert \delta u (\xi ;x,t) \rvert^3 dx \leq C(t) \lvert \xi \rvert \sigma_6 (\lvert \xi \rvert), \\
&\int_{\mathbb{T}^3} \lvert \delta u (\xi ;x,t) \rvert \lvert \delta \nabla u (\xi;x,t) \rvert^2 dx \leq C(t) \lvert \xi \rvert \sigma_7 (\lvert \xi \rvert),
\end{align*}
where we assume that $C \in L^1 (0,T)$, while $\sigma_j \in L^\infty_{\mathrm{loc}} (\mathbb{R})$ such that $\sigma_j (\lvert \xi \rvert)\rightarrow 0$ as $\lvert \xi \rvert \rightarrow 0$, for $j=6,7$. Then  $\lim_{\epsilon \to 0} D_{k,\epsilon}(u)=D_k (u)=0$, for $k=6,7,8$.
\end{proposition}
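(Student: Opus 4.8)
The plan is to repeat the proof of Proposition~\ref{zerodefect} (equivalently, of Proposition~\ref{sufficientmodified}) almost verbatim, but carried out separately for each of the three defect terms $D_{6,\epsilon}(u)$, $D_{7,\epsilon}(u)$ and $D_{8,\epsilon}(u)$ --- recall that under the standing assumption $u \in L^3((0,T); W^{1,9/4}(\mathbb{T}^3))$ of the preceding theorem each of these is a well-defined element of $L^1((0,T); L^1(\mathbb{T}^3))$ for every fixed $\epsilon > 0$, so the statement $\lim_{\epsilon \to 0} D_{k,\epsilon}(u) = 0$ will be understood as $L^1$-convergence, which is stronger than the distributional convergence needed downstream. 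The first step is to dominate the three integrands pointwise: summing over the repeated indices and applying the Cauchy--Schwarz inequality componentwise, there are absolute constants $c$ with
\[
\lvert \delta u_i \rvert \, \lvert \delta u \rvert^2 \le c\, \lvert \delta u \rvert^3, \qquad
\lvert \delta u_j\, \delta \partial_k u_i\, \delta \partial_k u_j \rvert \le c\, \lvert \delta u \rvert \, \lvert \delta \nabla u \rvert^2, \qquad
\lvert \delta u_i \rvert \, \lvert \delta \partial_k u_j \rvert^2 \le c\, \lvert \delta u \rvert \, \lvert \delta \nabla u \rvert^2 ,
\]
which, combined with $\lvert \partial_i \phi_\epsilon \rvert \le \lvert \nabla_\xi \phi_\epsilon \rvert$, shows that the first hypothesised bound controls the integrand of $D_{6,\epsilon}$ while the second controls those of $D_{7,\epsilon}$ and $D_{8,\epsilon}$.

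Next, for each $k \in \{6,7,8\}$ I would estimate the space--time $L^1$ norm of $D_{k,\epsilon}(u)$. Since $\lvert \delta u \rvert^3$ and $\lvert \delta u \rvert \lvert \delta \nabla u \rvert^2$ lie in $L^1((0,T)\times\mathbb{T}^3)$ --- using the Sobolev embedding $W^{1,9/4}(\mathbb{T}^3) \subset L^9(\mathbb{T}^3)$ together with $u \in L^\infty((0,T); H^1) \cap L^3((0,T); W^{1,9/4})$ --- and since $C \in L^1(0,T)$, the Tonelli theorem justifies interchanging the integration in $x$ with the integration in $\xi$ against $\lvert \nabla_\xi \phi_\epsilon \rvert$. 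Inserting the pointwise bounds above, then the hypotheses of the proposition, and finally the change of variables $\xi = \epsilon z$ exactly as in Proposition~\ref{zerodefect}, I expect to arrive at
\[
\int_0^T \!\!\int_{\mathbb{T}^3} \lvert D_{k,\epsilon}(u)(x,t) \rvert \, dx\, dt \;\le\; c \left( \int_0^T C(t)\, dt \right) \int_{\mathbb{R}^3} \lvert \nabla_z \phi(z) \rvert \, \lvert z \rvert \, \sigma_k(\epsilon \lvert z \rvert)\, dz ,
\]
where $\sigma_k$ denotes $\sigma_6$ for $k=6$ and $\sigma_7$ for $k \in \{7,8\}$. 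Since $\phi$ is compactly supported and $\sigma_k \in L^\infty_{\mathrm{loc}}(\mathbb{R})$, the integrand on the right is dominated by a fixed integrable function and tends to $0$ pointwise as $\epsilon \to 0$, so the dominated convergence theorem closes the estimate.

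This gives $D_{k,\epsilon}(u) \to 0$ in $L^1((0,T)\times\mathbb{T}^3)$, hence $D_k(u) = 0$, for $k=6,7,8$ --- consistent with, and refining, the joint distributional limit $[D_6(u) + \alpha^2 D_7(u) + \tfrac12 \alpha^2 D_8(u)]$ already shown to exist in the preceding theorem. The only genuinely new point compared with the Leray-$\alpha$ computation is the componentwise book-keeping in the first step: one must check that the mixed-index products appearing in $D_{7,\epsilon}$ and $D_{8,\epsilon}$ collapse, after Cauchy--Schwarz, into the two scalar quantities $\lvert \delta u \rvert^3$ and $\lvert \delta u \rvert \lvert \delta \nabla u \rvert^2$ that the hypotheses actually bound. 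I expect this index bookkeeping to be the main (and essentially the only) obstacle; everything else is a routine repetition of the argument already given for the Leray-$\alpha$ and Euler-$\alpha$ models.
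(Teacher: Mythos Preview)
Your proposal is correct and is precisely what the paper intends: its proof is the single line ``The proof proceeds in the same fashion as the proof of Proposition~\ref{zerodefect},'' and you have spelled out exactly that repetition, including the pointwise index bookkeeping that collapses the integrands of $D_{7,\epsilon}$ and $D_{8,\epsilon}$ into $\lvert \delta u \rvert \lvert \delta \nabla u \rvert^2$ and the integrability check needed to swap the $x$- and $\xi$-integrations.
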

\begin{proof}
The proof proceeds in the same fashion as the proof of Proposition \ref{zerodefect}.
\end{proof}
We now specify a sufficient regularity assumption for the sufficient conditions in Proposition \ref{zerodefectclark} to be satisfied.
\begin{proposition}
Let $u$ be a weak solution of the inviscid Clark-$\alpha$ model with $u \in L^3 ((0,T); \linebreak B^s_{3,\infty} (\mathbb{T}^3))$ with $s > 1$, then $\lim_{\epsilon \rightarrow 0} D_{k,\epsilon} (u) = D_k (u) = 0$ for $k = 6,7,8$. This implies that the weak solution conserves energy.
\end{proposition}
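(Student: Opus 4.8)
The plan is to follow exactly the template used for the Euler-$\alpha$ equations in Proposition \ref{suffcondeuler}: reduce the claim to verifying the two pointwise bounds hypothesised in Proposition \ref{zerodefectclark}, invoke that proposition to conclude $D_6(u)=D_7(u)=D_8(u)=0$, and then feed this into the equation of local energy balance established above to obtain conservation of $\lVert u\rVert_{H^1}$ via the argument of Theorem \ref{conservationtheorem}.

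The first thing to check is consistency with the standing assumptions. Since $s>1$ one has on the torus the chain of embeddings $B^s_{3,\infty}(\mathbb{T}^3)\subset B^1_{3,1}(\mathbb{T}^3)\subset W^{1,3}(\mathbb{T}^3)\subset W^{1,9/4}(\mathbb{T}^3)$, so $u\in L^3((0,T);W^{1,9/4}(\mathbb{T}^3))$ and the local energy balance theorem for the Clark-$\alpha$ model applies, with its defect terms $D_6,D_7,D_8$. Next I would verify the two bounds. For the first one I do not even need $s>1$: using the Besov embedding $H^1(\mathbb{T}^3)=B^1_{2,2}(\mathbb{T}^3)\subset B^{1/2}_{3,\infty}(\mathbb{T}^3)$ together with inequality \eqref{besovinequality},
\[
\int_{\mathbb{T}^3}\lvert\delta u(\xi;x,t)\rvert^3\,dx\le C\lvert\xi\rvert^{3/2}\lVert u(\cdot,t)\rVert_{H^1}^3,
\]
so one takes $C(t)=C\lVert u(\cdot,t)\rVert_{H^1}^3\in L^\infty(0,T)\subset L^1(0,T)$ and $\sigma_6(\lvert\xi\rvert)=\lvert\xi\rvert^{1/2}\to0$; in particular $D_6(u)=0$ for \emph{every} weak solution of the Clark-$\alpha$ model. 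For the second bound I apply \eqref{besovinequality} to $u$ and to $\nabla u$, then Hölder's inequality with three exponents equal to $3$:
\[
\int_{\mathbb{T}^3}\lvert\delta u\rvert\,\lvert\delta\nabla u\rvert^2\,dx\le\lVert\delta u\rVert_{L^3}\lVert\delta\nabla u\rVert_{L^3}^2\le C\lvert\xi\rvert^{s}\lvert\xi\rvert^{2(s-1)}\lVert u\rVert_{B^s_{3,\infty}}^3=C\lvert\xi\rvert^{3s-2}\lVert u\rVert_{B^s_{3,\infty}}^3.
\]
Since $u\in L^3((0,T);B^s_{3,\infty})$ the factor $\lVert u(\cdot,t)\rVert_{B^s_{3,\infty}}^3$ lies in $L^1(0,T)$, so one takes $C(t)=C\lVert u(\cdot,t)\rVert_{B^s_{3,\infty}}^3$ and $\sigma_7(\lvert\xi\rvert)=\lvert\xi\rvert^{3s-3}$, which tends to $0$ as $\lvert\xi\rvert\to0$ precisely because $s>1$. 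Proposition \ref{zerodefectclark} then gives $D_6(u)=D_7(u)=D_8(u)=0$.

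Finally, with all three defect terms vanishing the local energy balance collapses to $\partial_t(\lvert u\rvert^2+\alpha^2\lvert\nabla u\rvert^2)$ plus a sum of spatial divergences (the fluxes $\nabla\cdot(\lvert u\rvert^2 u)$, $2\nabla\cdot(pu)$, the $\alpha^2$-cubic fluxes $2\alpha^2\partial_i(u_j\partial_ku_j\partial_ku_i)$, $\alpha^2\partial_i\partial_k(u_j\partial_ku_iu_j)$, $3\alpha^2\partial_k(\partial_iu_j\partial_ku_iu_j)$, and the two time-derivative-of-divergence terms $-2\alpha^2\partial_t\partial_i(u_j\partial_iu_j)$ and $2\alpha^2\partial_i(\partial_tu_j\partial_iu_j)$). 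Testing against a function $\chi=\chi(t)$ depending on time only annihilates every term in divergence form, leaving $\partial_t$ of the $H^1$-energy density; then, choosing $\chi$ to approximate the indicator of $[t_1,t_2]$ and passing to the limit with the Lebesgue differentiation theorem exactly as in the proof of Theorem \ref{conservationtheorem}, one obtains $\lVert u(\cdot,t_1)\rVert_{H^1}=\lVert u(\cdot,t_2)\rVert_{H^1}$ for a.e.\ $t_1,t_2\in(0,T)$. There is no genuine obstacle here — the argument is routine — but the one point deserving care is the bookkeeping of the terms linear in $\partial_t u$: one needs $\partial_tu_j\,\partial_iu_j\in L^1((0,T)\times\mathbb{T}^3)$ so that $2\alpha^2\partial_i(\partial_tu_j\partial_iu_j)$ is a bona fide distributional spatial divergence, and this is already guaranteed by the regularity $\partial_tu\in L^3((0,T);L^{9/5}(\mathbb{T}^3))$ and $\nabla u\in L^3((0,T);L^3(\mathbb{T}^3))$ derived in the proof of the local energy balance.
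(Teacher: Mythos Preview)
Your proposal is correct and follows essentially the same approach as the paper's own proof: you verify the two hypotheses of Proposition \ref{zerodefectclark} by using the Besov embedding $H^1\subset B^{1/2}_{3,\infty}$ for the first bound (giving $\sigma_6(\lvert\xi\rvert)=\lvert\xi\rvert^{1/2}$) and inequality \eqref{besovinequality} applied to $u$ and $\nabla u$ for the second (giving $\lvert\xi\rvert^{3s-2}$), exactly as the paper does. Your additional checks---the embedding $B^s_{3,\infty}\subset W^{1,9/4}$ ensuring the local energy balance applies, and the explicit discussion of how energy conservation follows by testing against $\chi=\chi(t)$---are not in the paper's short proof of this proposition but are correct and welcome elaborations.
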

\begin{proof}
To prove the first inequality in Proposition \ref{zerodefectclark} we observe that the Besov embedding theorem gives us that $H^1 (\mathbb{T}^3) \subset B^{1/2}_{3,\infty} (\mathbb{T}^3)$ and hence we get that
\begin{equation*}
\int_{\mathbb{T}^3} \lvert \delta u (\xi;x,t) \rvert^3 dx \leq \lvert \xi \rvert^{3/2} \lVert u \rVert_{H^1 }^3.
\end{equation*}
One can observe that the second inequality in Proposition \ref{zerodefectclark} holds because
\begin{equation*}
\int_{\mathbb{T}^3} \lvert \delta u (\xi;x,t) \rvert \lvert \delta \nabla u (\xi;x,t) \rvert^2 dx \leq \lvert \xi \rvert^{3s - 2} \lVert u \rVert_{B^s_{3,\infty}}^3.
\end{equation*}
This follows from using property \eqref{besovinequality} for functions in Besov spaces.
Therefore the assumptions of Proposition \ref{zerodefectclark} are satisfied and we conclude that $D_6 (u) = D_7 (u) = D_8 (u) = 0$.
\end{proof}
This proposition allows us to prove conservation of energy.
\begin{theorem}
Let $u$ be a weak solution of the inviscid Clark-$\alpha$ model with $u \in L^3 ((0,T); B^s_{3,\infty} (\mathbb{T}^3))$ with $s > 1$, then the solution conserves energy. In particular, it holds that
\begin{equation*}
\lVert u (\cdot, t_1) \rVert_{H^1} = \lVert u (\cdot, t_2) \rVert_{H^1},
\end{equation*}
for almost every $t_1, t_2 \in (0,T)$.
\end{theorem}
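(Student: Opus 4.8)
The plan is to run the argument of the proof of Theorem~\ref{conservationtheorem} essentially verbatim, the only extra work being to check that the additional $\alpha^2$-terms in the Clark-$\alpha$ energy balance make no contribution. Note first that the hypothesis $u\in L^3((0,T);B^s_{3,\infty}(\mathbb{T}^3))$ with $s>1$ implies $u\in L^3((0,T);W^{1,9/4}(\mathbb{T}^3))$ (via $B^s_{3,\infty}\hookrightarrow W^{1,3}\hookrightarrow W^{1,9/4}$), so the equation of local energy balance established above is available; moreover, by the preceding proposition $D_6(u)=D_7(u)=D_8(u)=0$, so the bracketed defect term $\big[D_6(u)+\alpha^2 D_7(u)+\tfrac12\alpha^2 D_8(u)\big]$ vanishes. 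Removing it, the local energy balance becomes, in $\mathcal{D}'(\mathbb{T}^3\times(0,T))$,
\begin{align*}
&\partial_t(\lvert u\rvert^2)+\alpha^2\partial_t(\lvert\nabla u\rvert^2)-2\alpha^2\partial_t\partial_i(u_j\partial_i u_j)+2\alpha^2\partial_i(\partial_t u_j\partial_i u_j)+\nabla\cdot(\lvert u\rvert^2 u) \\
&+2\alpha^2\partial_i(u_j\partial_k u_j\partial_k u_i)+\alpha^2\partial_i\partial_k(u_j\partial_k u_i u_j)+3\alpha^2\partial_k(\partial_i u_j\partial_k u_i u_j)+2\nabla\cdot(pu)=0.
\end{align*}

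Next I would test this identity against a function $\chi(x,t)=\psi_1(t)$ depending only on time, chosen exactly as in the proof of Theorem~\ref{conservationtheorem}: for $0<t_1<t_2<T$ and a temporal mollifier $\varphi_\epsilon$ supported in $[-\epsilon,\epsilon]$, set
\begin{equation*}
\psi_1(t)=\int_0^t\big(\varphi_\epsilon(t'-t_1)-\varphi_\epsilon(t'-t_2)\big)\,dt',
\end{equation*}
so that, for $\epsilon$ small, $\psi_1$ is compactly supported in $(0,T)$, equal to $1$ on $(t_1+\epsilon,t_2-\epsilon)$ and to $0$ near the endpoints. Since $\nabla_x\psi_1\equiv 0$, every term that is a spatial divergence or carries a spatial derivative $\partial_i$ or $\partial_i\partial_k$ integrates to zero over $\mathbb{T}^3$ — in particular $2\nabla\cdot(pu)$, using $p\in L^{3/2}((0,T);L^{9/8})$ and $u\in L^\infty((0,T);L^6)$ so that $pu\in L^1$ — and the two mixed space-time terms vanish because $\langle\partial_t\partial_i w,\psi_1\rangle=\langle w,\partial_i\partial_t\psi_1\rangle=0$ and $\langle\partial_i F,\psi_1\rangle=-\langle F,\partial_i\psi_1\rangle=0$. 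Only the time-differentiated terms survive, and one is left with
\begin{equation*}
\int_{t_1-\epsilon}^{t_1+\epsilon}\lVert u(\cdot,t)\rVert_{H^1}^2\,\varphi_\epsilon(t-t_1)\,dt=\int_{t_2-\epsilon}^{t_2+\epsilon}\lVert u(\cdot,t)\rVert_{H^1}^2\,\varphi_\epsilon(t-t_2)\,dt,
\end{equation*}
with $\lVert\cdot\rVert_{H^1}$ the norm \eqref{H1norm}.

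Finally, since $t\mapsto\lVert u(\cdot,t)\rVert_{H^1}^2\in L^\infty(0,T)\subset L^1_{\mathrm{loc}}(0,T)$, letting $\epsilon\to0$ and invoking the Lebesgue differentiation theorem (cf.\ \cite{folland,zygmund}) gives $\lVert u(\cdot,t_1)\rVert_{H^1}=\lVert u(\cdot,t_2)\rVert_{H^1}$ for almost every $t_1,t_2\in(0,T)$, as claimed. The one point that is not entirely automatic is the bookkeeping in the middle step: one must confirm that every one of the extra Clark-$\alpha$ terms really is either a pure spatial derivative or a time derivative composed with a spatial derivative, so that it is annihilated by a test function independent of $x$. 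Once this is checked, no estimates beyond those already used to obtain the local energy balance are needed, which is why this proof is, line for line, that of Theorem~\ref{conservationtheorem}.
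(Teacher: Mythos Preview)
Your proposal is correct and follows exactly the approach the paper intends: for the Clark-$\alpha$ theorem the paper gives no separate proof, relying implicitly (as it does explicitly for the Euler-$\alpha$, modified Leray-$\alpha$, and MHD cases) on the argument of Theorem~\ref{conservationtheorem}. Your write-up is in fact more thorough than the paper's, since you explicitly verify that each of the additional $\alpha^2$-terms in the Clark-$\alpha$ local energy balance is a pure spatial derivative (or a time derivative composed with one) and hence annihilated by a test function depending only on $t$.
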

\section{The Leray-$\alpha$ MHD model} \label{lerayMHDsection}
The equations for the Leray-$\alpha$ MHD model are
\begingroup
\allowdisplaybreaks
\begin{align*}
&\partial_t v + (u \cdot \nabla) v + \nabla p + \frac{1}{2} \nabla \lvert B \rvert^2 = (B \cdot \nabla) B, \\
&\partial_t B + (u \cdot \nabla) B - (B \cdot \nabla )v = 0, \\
&v = u - \alpha^2 \Delta u, \quad \nabla \cdot u = \nabla \cdot v = \nabla \cdot B = 0.
\end{align*}
\endgroup
\begin{definition}
A weak solution of the Leray-$\alpha$ MHD model is a triplet $v \in L^\infty ( (0,T); L^2 (\mathbb{T}^3)), B \in L^\infty ((0,T); L^2 (\mathbb{T}^3))$ and $p \in L^\infty ((0,T); L^1 (\mathbb{T}^3)$ if for all $\psi \in \mathcal{D } (\mathbb{T}^3 \times (0,T); \mathbb{R}^3)$ and $\chi \in \mathcal{D} (\mathbb{T}^3 \times (0,T); \mathbb{R})$ the following equations hold
\begingroup
\allowdisplaybreaks
\begin{align*}
&\int_0^T \int_{\mathbb{T}^3} v \cdot \partial_t \psi dx dt + \int_0^T \int_{\mathbb{T}^3} v_j u_i \partial_i \psi_j dx dt + \int_0^T \int_{\mathbb{T}^3} p \nabla \cdot \psi dx dt + \frac{1}{2} \int_0^T \int_{\mathbb{T}^3} \lvert B \rvert^2 \nabla \cdot \psi dx dt \\
&- \int_0^T \int_{\mathbb{T}^3} (B \otimes B): \nabla \psi dx dt = 0,  \\
&\int_0^T \int_{\mathbb{T}^3} B \cdot \partial_t \psi dx dt + \int_0^T \int_{\mathbb{T}^3} B_j u_i \partial_i \psi_j dx dt - \int_0^T \int_{\mathbb{T}^3} v_j B_i \partial_i \psi_j dx dt = 0, \\
&\int_0^T \int_{\mathbb{T}^3} u_i \partial_i \chi dx dt = \int_0^T \int_{\mathbb{T}^3} B_i \partial_i \chi dx dt = 0.
\end{align*}
\endgroup
We fix the constant in the definition of the pressure by demanding that
\begin{equation*}
\int_{\mathbb{T}^3} p(x,t) dx = 0.
\end{equation*}
\end{definition}
Once again, we extend  the space of test functions.
\begin{lemma} \label{gentestMHD}
The weak formulation of the Leray-$\alpha$ MHD model still holds for $\psi \in W^{1,1}_0 ( (0,T); \linebreak L^2 (\mathbb{T}^3)) \cap L^1 ((0,T); H^3 (\mathbb{T}^3))$.
\end{lemma}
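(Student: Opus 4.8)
The plan is to mimic the density argument used in the proof of Lemma \ref{generaltestfunctions}, adapting the bookkeeping to the fact that here two of the unknowns, $v$ and $B$, are only $L^2$ in space. First I would record the regularity that is available. Since $v \in L^\infty((0,T); L^2(\mathbb{T}^3))$, the Helmholtz inverse gives $u = (I - \alpha^2 \Delta)^{-1} v \in L^\infty((0,T); H^2(\mathbb{T}^3)) \subset L^\infty((0,T); L^\infty(\mathbb{T}^3))$ by the three-dimensional Sobolev embedding $H^2(\mathbb{T}^3) \subset L^\infty(\mathbb{T}^3)$; moreover $B \in L^\infty((0,T); L^2(\mathbb{T}^3))$ and $p \in L^\infty((0,T); L^1(\mathbb{T}^3))$ by hypothesis. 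On the test-function side I would use that $H^3(\mathbb{T}^3) \subset W^{1,\infty}(\mathbb{T}^3)$, since $\nabla \psi \in H^2(\mathbb{T}^3) \subset L^\infty(\mathbb{T}^3)$, so that every $\psi \in W^{1,1}_0((0,T); L^2(\mathbb{T}^3)) \cap L^1((0,T); H^3(\mathbb{T}^3))$ satisfies $\partial_t \psi \in L^1((0,T); L^2(\mathbb{T}^3))$ and $\nabla \psi \in L^1((0,T); L^\infty(\mathbb{T}^3))$.

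Next I would invoke the density of $\mathcal{D}(\mathbb{T}^3 \times (0,T); \mathbb{R}^3)$ in $W^{1,1}_0((0,T); L^2(\mathbb{T}^3)) \cap L^1((0,T); H^3(\mathbb{T}^3))$ (obtained by mollifying in space and then mollifying and truncating in time), and select a sequence $\psi_m \to \psi$ in this space. Each $\psi_m$ is admissible in the weak formulation of Definition for the Leray-$\alpha$ MHD model, so it suffices to pass to the limit as $m \to \infty$ in each of the (nine) terms of the momentum and induction equations.

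The passage to the limit is then carried out term by term, always by a Hölder estimate with one factor in $L^\infty_t$ supplied by the solution and the complementary factor in $L^1_t$ supplied by $\psi_m - \psi$. For the time-derivative terms $\int_0^T \int_{\mathbb{T}^3} v \cdot \partial_t \psi_m$ and $\int_0^T \int_{\mathbb{T}^3} B \cdot \partial_t \psi_m$, one uses $v, B \in L^\infty((0,T); L^2(\mathbb{T}^3))$ together with $\partial_t \psi_m \to \partial_t \psi$ in $L^1((0,T); L^2(\mathbb{T}^3))$. For the transport terms $\int v_j u_i \partial_i \psi_j$ and $\int B_j u_i \partial_i \psi_j$, one uses $v_j u_i, B_j u_i \in L^\infty((0,T); L^2(\mathbb{T}^3))$ (here the Sobolev embedding for $u$ is essential) together with $\nabla \psi_m \to \nabla \psi$ in $L^1((0,T); L^\infty(\mathbb{T}^3)) \hookrightarrow L^1((0,T); L^2(\mathbb{T}^3))$. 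For the genuinely $L^1_x$ terms — the pressure term $\int p \, \nabla \cdot \psi$, the magnetic-pressure term $\int |B|^2 \nabla \cdot \psi$, the Maxwell-stress term $\int (B \otimes B) : \nabla \psi$ in the momentum equation, and the stretching term $\int v_j B_i \partial_i \psi_j$ in the induction equation — one uses that $p$, $|B|^2$, $B \otimes B$ and $v_j B_i$ all lie in $L^\infty((0,T); L^1(\mathbb{T}^3))$, paired with $\nabla \psi_m \to \nabla \psi$ in $L^1((0,T); L^\infty(\mathbb{T}^3))$. Letting $m \to \infty$ then yields the identities for $\psi$.

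I do not expect any essential obstacle: the argument is a routine approximation, and the only point that genuinely needs care is the bookkeeping that fixes the correct function space for the test functions. Concretely, the stretching term $\int v_j B_i \partial_i \psi_j$ has an integrand that is only $L^1$ in space (a product of two $L^2$ functions), which is exactly what forces $\nabla \psi \in L^\infty_x$, hence $\psi \in H^3$; and pairing $\partial_t \psi$ against the merely $L^2_x$ quantities $v$ and $B$ is what forces the time-regularity space $W^{1,1}_0((0,T); L^2(\mathbb{T}^3))$ rather than a weaker one. Isolating this combination is the substance of the lemma.
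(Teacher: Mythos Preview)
Your proposal is correct and follows exactly the density argument the paper has in mind; the paper's own proof simply reads ``The proof works the same way as for the other models,'' referring back to Lemma~\ref{generaltestfunctions}. Your write-up is a faithful (and more detailed) unpacking of that reference, with the correct identification of which terms force the $H^3$ spatial regularity on the test functions.
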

\begin{proof}
The proof works the same way as for the other models.
\end{proof}
Now we establish the equation of local energy balance.
\begin{theorem}
Let $(v,B)$ be a weak solution of the Leray-$\alpha$ MHD model such that $v,B \in L^3 ((0,T); L^3 (\mathbb{T}^3))$. Then the following equation of local energy balance holds in the sense of distributions
\begin{align*}
&\partial_t (\lvert v \rvert^2 + \lvert B \rvert^2) + 2 \nabla \cdot (pv) +  \nabla \cdot (\lvert v \rvert^2 u) + 2 \nabla \cdot (\lvert B \rvert^2 v) + \nabla \cdot (\lvert B \rvert^2 u) -2 \nabla \cdot ( (v\cdot B) B) \\
&+ \big[ D_{9} (u,v) + D_{10} (u,B) - D_{11} (v,B) \big] = 0.
\end{align*}
Here the sum of the three defect terms is given by the limit
\begingroup
\allowdisplaybreaks
\begin{align*}
&\big[ D_{9} (u,v) + D_{10} (u,B) - D_{11} (v,B) \big] (x,t) \coloneqq \lim_{\epsilon \rightarrow 0} \bigg[ \frac{1}{2} \int_{\mathbb{R}^3} \nabla_\xi \phi_\epsilon (\xi) \cdot \delta u (\xi ;x,t) \lvert \delta v (\xi; x,t ) \rvert^2 d \xi \\
&+ \frac{1}{2} \int_{\mathbb{R}^3} \nabla_\xi \phi_\epsilon (\xi) \cdot \delta u (\xi ;x,t) \lvert \delta B (\xi; x,t ) \rvert^2 d \xi -  \int_{\mathbb{R}^3} \nabla_\xi \phi_\epsilon (\xi) \cdot \delta B (\xi ;x,t) ( \delta B (\xi ;x,t) \cdot \delta v (\xi ;x,t)) d \xi \bigg].
\end{align*}
\endgroup
Note that the limit holds in the sense of distributions and is independent of the choice of mollifier. Like in the case of the Clark-$\alpha$ model, $\big[ D_{9} (u,v) + D_{10} (u,B) - D_{11} (v,B) \big]$ should be regarded as a single distribution.
\end{theorem}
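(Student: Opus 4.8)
The plan is to reproduce, now for a coupled pair of equations, the mollify-multiply-subtract scheme used for the Leray-$\alpha$ model in the proof of Theorem~\ref{energyequationtheorem} (and extended above to the Clark-$\alpha$ model). First I would mollify in space, with $\phi_\epsilon$, both the $v$-equation and the $B$-equation. Since $u=(I-\alpha^2\Delta)^{-1}v\in L^3((0,T);W^{2,3}(\mathbb{T}^3))\subset L^3((0,T);L^\infty(\mathbb{T}^3))$, and, from the distributional elliptic equation for the pressure (Calder\'on--Zygmund applied to $\Delta p=-\partial_i\partial_j(u_iv_j)-\tfrac12\Delta|B|^2+\partial_i\partial_j(B_iB_j)$), $p\in L^{3/2}((0,T);L^{3/2}(\mathbb{T}^3))$, the mollified right-hand sides lie in $L^1((0,T);C^\infty(\mathbb{T}^3))$; hence $v^\epsilon,B^\epsilon\in W^{1,1}((0,T);C^\infty(\mathbb{T}^3))$ and the mollified equations hold a.e.\ in $\mathbb{T}^3\times(0,T)$. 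The hypothesis $v,B\in L^3((0,T);L^3(\mathbb{T}^3))$ is precisely what makes all the cubic fluxes $|v|^2u$, $|B|^2u$, $|B|^2v$ and $(v\cdot B)B$ integrable on $\mathbb{T}^3\times(0,T)$.

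Next, by Lemma~\ref{gentestMHD}, $v^\epsilon\chi$ and $B^\epsilon\chi$ are admissible test functions for every $\chi\in\mathcal{D}(\mathbb{T}^3\times(0,T);\mathbb{R})$. I would take the scalar product of the mollified $v$-equation with $v\chi$ and integrate, do the same for the mollified $B$-equation with $B\chi$, subtract from these the weak formulations tested against $v^\epsilon\chi$ and $B^\epsilon\chi$ respectively, and add the two identities. As in Theorem~\ref{energyequationtheorem}, the time-derivative terms collapse, after integrating by parts in $t$, to $-\langle\partial_t(v\cdot v^\epsilon+B\cdot B^\epsilon),\chi\rangle$. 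For the two transport terms $\nabla\cdot(u\otimes v)$ (paired with $v$) and $(u\cdot\nabla)B$ (paired with $B$) I would introduce
\[
D_{9,\epsilon}(u,v)\coloneqq\tfrac{1}{2}\int_{\mathbb{R}^3}\nabla_\xi\phi_\epsilon(\xi)\cdot\delta u\,|\delta v|^2\,d\xi,\qquad D_{10,\epsilon}(u,B)\coloneqq\tfrac{1}{2}\int_{\mathbb{R}^3}\nabla_\xi\phi_\epsilon(\xi)\cdot\delta u\,|\delta B|^2\,d\xi,
\]
using the very same pointwise algebraic identity that produced $D_{1,\epsilon}$ in the Leray-$\alpha$ proof, which writes each of these defect terms as a combination of mollified products together with transport and divergence pieces. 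The magnetic pressure $\tfrac{1}{2}|B|^2$ is handled exactly like the ordinary pressure and contributes a flux of the same type; together with the $p$-terms it assembles into minus the divergence of $p^\epsilon v+pv^\epsilon$ plus the corresponding magnetic-pressure flux.

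The delicate part is the magnetic coupling. Here the Lorentz term $\nabla\cdot(B\otimes B)$ in the $v$-equation (paired with $v^\epsilon$) and the stretching term $(B\cdot\nabla)v=\nabla\cdot(v\otimes B)$ in the $B$-equation (paired with $B^\epsilon$) have to be matched against one another. Expanding
\[
\int_{\mathbb{R}^3}\nabla_\xi\phi_\epsilon(\xi)\cdot\delta B\,(\delta B\cdot\delta v)\,d\xi
\]
into the eight monomials arising from the triple product of increments, and applying the elementary identities $\int_{\mathbb{R}^3}\partial_{\xi_i}\phi_\epsilon(\xi)\,g(x+\xi)\,d\xi=-(\partial_ig)^\epsilon(x)$ and $\int_{\mathbb{R}^3}\partial_{\xi_i}\phi_\epsilon(\xi)\,d\xi=0$, together with $\nabla\cdot u=\nabla\cdot v=\nabla\cdot B=0$, shows that these two contributions combine into $-D_{11,\epsilon}(v,B)$ modulo pure-divergence remainders, where $D_{11,\epsilon}(v,B)$ denotes the integral displayed just above; the leftover divergences produce the $(v\cdot B)B$-flux in the limit.

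Finally I would pass to the limit $\epsilon\to0$, exactly as in the earlier proofs. Every commutator remainder, for instance $(|v|^2u)^\epsilon-(|v|^2)^\epsilon u$, $(|B|^2u)^\epsilon-(|B|^2)^\epsilon u$, and the cross commutators built from $(v\cdot B)B$, tends to $0$ in $L^1((0,T)\times\mathbb{T}^3)$: split each difference as in the proof of Theorem~\ref{energyequationtheorem} and use $u\in L^3((0,T);L^\infty(\mathbb{T}^3))$ while the surviving quadratic factors lie in $L^{3/2}((0,T);L^{3/2}(\mathbb{T}^3))$. The genuine fluxes converge in $L^1$, one has $p^\epsilon v+pv^\epsilon\to2pv$, and $v\cdot v^\epsilon+B\cdot B^\epsilon\to|v|^2+|B|^2$ in $L^\infty((0,T);L^1(\mathbb{T}^3))$ so that $\partial_t(v\cdot v^\epsilon+B\cdot B^\epsilon)\to\partial_t(|v|^2+|B|^2)$ distributionally. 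Collecting all the terms, for each fixed $\epsilon>0$ one obtains an identity in $\mathcal{D}'$ in which the bracket $[D_{9,\epsilon}+D_{10,\epsilon}-D_{11,\epsilon}]$ equals minus the sum of all the remaining terms, each of which converges and is independent of the choice of mollifier; hence $\lim_{\epsilon\to0}[D_{9,\epsilon}+D_{10,\epsilon}-D_{11,\epsilon}]$ exists as a distribution and is mollifier-independent, and in the limit one obtains the stated equation of local energy balance. As in the Clark-$\alpha$ case one only records convergence of the bracket at this stage, not of the individual terms. I expect the main obstacle to be purely combinatorial: arranging the Lorentz and stretching contributions so that they assemble precisely into $D_{11,\epsilon}$ with only divergence-form leftovers --- the induction equation is not in the same conservative form as the $v$-equation, so one must keep track of which factor in each triple product carries the mollification --- and ensuring that the magnetic-pressure flux comes out exactly as displayed. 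No analytic ingredient beyond those already used for the Leray-$\alpha$ and Clark-$\alpha$ models is needed.
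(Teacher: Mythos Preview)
Your proposal is correct and follows essentially the same route as the paper's proof: mollify both equations, test with $v^\epsilon\chi$ and $B^\epsilon\chi$ via Lemma~\ref{gentestMHD}, subtract the mollified equations paired with $v\chi$ and $B\chi$, introduce the three defect terms $D_{9,\epsilon},D_{10,\epsilon},D_{11,\epsilon}$ exactly as you describe, and pass to the limit by showing all remaining terms converge. Your identification of the combinatorial step (assembling the Lorentz and stretching contributions into $D_{11,\epsilon}$ plus divergences) as the only genuinely new ingredient, and your observation that only the bracketed sum is shown to converge, both match the paper precisely.
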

\begin{proof}
Mollifying the equations of the Leray-$\alpha$ MHD model in space (with $\phi_\epsilon$) gives that
\begin{align}
&\partial_t v^\epsilon + \nabla \cdot (u \otimes v)^\epsilon + \nabla p^\epsilon + \frac{1}{2} \nabla (\lvert B \rvert^2)^\epsilon - \nabla \cdot (B \otimes B)^\epsilon = 0, \label{mollifiedMHD1}\\
&\partial_t B^\epsilon + \nabla \cdot (u \otimes B)^\epsilon - \nabla \cdot (B \otimes v)^\epsilon = 0. \label{mollifiedMHD2}
\end{align}
It follows that $v^\epsilon, B^\epsilon \in L^\infty ((0,T); C^\infty (\mathbb{T}^3))$, as well as that $\partial_t v^\epsilon, \partial_t B^\epsilon \in L^\infty ((0,T); C^\infty (\mathbb{T}^3))$. As a result we can conclude that $v^\epsilon, B^\epsilon \in W^{1,\infty} ((0,T); C^\infty (\mathbb{T}^3))$. This allows us to apply Lemma \ref{gentestMHD} and use $v^\epsilon \chi$ and $B^\epsilon \chi$ as test functions in the weak formulations, where $\chi \in \mathcal{D} (\mathbb{T}^3 \times (0,T); \mathbb{R})$. Subtracting the mollified equations multiplied by $v \chi$ and $B \chi$ we obtain that
\begingroup
\allowdisplaybreaks
\begin{align*}
&\int_0^T \int_{\mathbb{T}^3} \bigg[ v \cdot \partial_t (v^\epsilon \chi) - v \chi \cdot \partial_t v^\epsilon + B \cdot \partial_t (B^\epsilon \chi) - B \chi \cdot \partial_t B^\epsilon  + u \otimes v : \nabla (v^\epsilon \chi) - v \cdot (\nabla \cdot (u \otimes v)^\epsilon) \\
&-  \chi v \cdot \nabla p^\epsilon + p \partial_i (v^\epsilon_i \chi)  + \frac{1}{2}  \lvert B \rvert^2 \nabla \cdot ( v^\epsilon \chi) - \frac{1}{2} v \chi \nabla (\lvert B \rvert^2)^\epsilon - B \otimes B : \nabla (v^\epsilon \chi) + v \cdot (\nabla \cdot (B \otimes B)^\epsilon) \\
&+   u \otimes B : \nabla (\chi B^\epsilon) - \chi B \cdot (\nabla \cdot (u \otimes B)^\epsilon ) - B \otimes v : \nabla (\chi B^\epsilon) + B \chi \cdot (\nabla \cdot ( B \otimes v)^\epsilon) \bigg] dx dt = 0.
\end{align*}
\endgroup
The time derivative terms then become
\begin{align*}
&\int_0^T \int_{\mathbb{T}^3} \bigg[ v \cdot \partial_t (v^\epsilon \chi) - v \chi \cdot \partial_t v^\epsilon +  B \cdot \partial_t (B^\epsilon \chi) - B \chi \cdot \partial_t B^\epsilon \bigg] dx dt \\
&= \int_0^T \int_{\mathbb{T}^3} \big[ v \cdot v^\epsilon + B \cdot B^\epsilon \bigg] \partial_t \chi dx dt = - \bigg\langle \partial_t \big( v \cdot v^\epsilon + B \cdot B^\epsilon \big), \chi \bigg\rangle.
\end{align*}
For the pressure terms we get that
\begin{align*}
\int_0^T \int_{\mathbb{T}^3} \bigg[- \chi v \cdot \nabla p^\epsilon + p \partial_i (v^\epsilon_i \chi) \bigg] dx dt &=  \int_0^T \int_{\mathbb{T}^3} \bigg[ p^\epsilon v_i \partial_i \chi + p v^\epsilon_i \partial_i  \chi \bigg] dx dt \\
&= -\langle \nabla \cdot (p^\epsilon v + p v^\epsilon), \chi \rangle.
\end{align*}
Note that $p \in L^{3/2} ((0,T); L^{3/2} (\mathbb{T}^3))$ and therefore $p^\epsilon v + p v^\epsilon \xrightarrow[]{\epsilon \rightarrow 0} 2 p v$ in $L^{1} ((0,T); L^1 (\mathbb{T}^3))$. Several of the advective terms can be written down straightforwardly (analogously to the Leray-$\alpha$ model)
\begin{align*}
&\int_0^T \int_{\mathbb{T}^3} \bigg[ u \otimes v : \nabla (v^\epsilon \chi)  - v \cdot (\nabla \cdot (u \otimes v)^\epsilon) \bigg] dx dt \\
&= -\int_0^T \int_{\mathbb{T}^3} \bigg[\chi D_{9,\epsilon} (u,v) - \frac{1}{2}\bigg(( v_j v_j)^\epsilon u_i  -(u_i v_j v_j)^\epsilon \bigg) \partial_i \chi  - v_j u_i v_j^\epsilon \partial_i \chi  \bigg] dx dt \\
&= -\bigg\langle D_{9,\epsilon} (u,v) + \frac{1}{2} \nabla \cdot ( (\lvert v \rvert^2)^\epsilon u -  (\lvert v \rvert^2 u)^\epsilon) + \nabla \cdot ( (v \cdot v^\epsilon) u) , \chi \bigg\rangle,
\end{align*}
where we have introduced the defect term
\begin{equation*}
D_{9,\epsilon} (u,v) (x,t) \coloneqq \frac{1}{2} \int_{\mathbb{R}^3} \nabla_\xi \phi_\epsilon (\xi) \cdot \delta u (\xi ;x,t) \lvert \delta v (\xi; x,t ) \rvert^2 d \xi,
\end{equation*}
Now we can write that
\begin{align*}
\frac{1}{2} \int_0^T \int_{\mathbb{T}^3} \bigg[ \lvert B \rvert^2 \nabla \cdot ( v^\epsilon \chi) - v \chi \nabla (\lvert B \rvert^2)^\epsilon \bigg] dx dt &= \frac{1}{2} \int_0^T \int_{\mathbb{T}^3} \bigg[ \lvert B \rvert^2 v^\epsilon \cdot \nabla \chi + v (\lvert B \rvert^2)^\epsilon \cdot \nabla \chi \bigg] dx dt \\
&= -\bigg\langle \nabla \cdot \big( \lvert B \rvert^2 v^\epsilon + (\lvert B \rvert^2)^\epsilon v \big) , \chi \bigg\rangle = 0.
\end{align*}
For the other advective terms we find that
\begin{align*}
&\int_0^T \int_{\mathbb{T}^3} \bigg[ u \otimes B : \nabla (\chi B^\epsilon) - \chi B \cdot (\nabla \cdot (u \otimes B)^\epsilon ) \bigg] dx dt \\
&= -\int_0^T \int_{\mathbb{T}^3} \bigg[\chi D_{10,\epsilon} (u,B) - \frac{1}{2}\bigg(( B_j B_j)^\epsilon u_i  -(u_i B_j B_j)^\epsilon \bigg) \partial_i \chi   - B_j u_i B_j^\epsilon \partial_i \chi  \bigg] dx dt \\
&= -\bigg\langle D_{10,\epsilon} (u,B) + \frac{1}{2} \nabla \cdot ( (\lvert B \rvert^2)^\epsilon u - (\lvert B \rvert^2 u)^\epsilon ) + \nabla \cdot ( (B \cdot B^\epsilon) u) , \chi \bigg\rangle,
\end{align*}
where we have introduced the defect term
\begin{equation*}
D_{10,\epsilon} (u,B) (x,t) \coloneqq \frac{1}{2} \int_{\mathbb{R}^3} \nabla_\xi \phi_\epsilon (\xi) \cdot \delta u (\xi ;x,t) \lvert \delta B (\xi; x,t ) \rvert^2 d \xi,
\end{equation*}
Finally, we are left with
\begin{align*}
\int_0^T \int_{\mathbb{T}^3} \bigg[v \cdot (\nabla \cdot (B \otimes B)^\epsilon) - B \otimes B : \nabla (v^\epsilon \chi) - B \otimes v : \nabla (\chi B^\epsilon) + B \chi \cdot (\nabla \cdot ( B \otimes v)^\epsilon) \bigg] dx dt.
\end{align*}
We introduce another defect term, which can be calculated to be
\begin{align*}
D_{11,\epsilon} (v,B) &\coloneqq \int_{\mathbb{R}^3} \nabla_\xi \phi_\epsilon (\xi) \cdot \delta B (\xi ;x,t) ( \delta B (\xi ;x,t) \cdot \delta v (\xi ;x,t)) d \xi \\
&= - \partial_i (B_i B_j v_j)^\epsilon + B_i \partial_i (B_j v_j)^\epsilon + B_j \partial_i (B_i v_j)^\epsilon + v_j \partial_i (B_i B_j)^\epsilon - B_i v_j \partial_i B_j^\epsilon - B_i B_j \partial_i v_j^\epsilon.
\end{align*}
Therefore we can write the different terms as follows
\begingroup
\allowdisplaybreaks
\begin{align*}
&\int_0^T \int_{\mathbb{T}^3} \bigg[v \cdot (\nabla \cdot (B \otimes B)^\epsilon) - B \otimes B : \nabla (v^\epsilon \chi) - B \otimes v : \nabla (\chi B^\epsilon) + B \chi \cdot (\nabla \cdot ( B \otimes v)^\epsilon) \bigg] dx dt \\
&= \int_0^T \int_{\mathbb{T}^3} \bigg[ - B \otimes B : \nabla \chi \otimes v^\epsilon - B \otimes v : \nabla \chi \otimes B^\epsilon + \chi D_{11,\epsilon} (v,B) + \chi \nabla \cdot ( (B \cdot v) B)^\epsilon \\
&- \chi B \cdot \nabla ( B \cdot v)^\epsilon \bigg] dx dt = \bigg\langle \nabla \cdot ( (v^\epsilon \cdot B) B) + \nabla \cdot ( (B^\epsilon \cdot v) B) +  D_{11,\epsilon} (v,B) +  \nabla \cdot ( (B \cdot v) B)^\epsilon \\
&-  \nabla \cdot ( ( B \cdot v)^\epsilon B), \chi \bigg\rangle.
\end{align*}
\endgroup
From this we can conclude that we have the following equation of local energy balance
\begin{align*}
\bigg\langle &-\partial_t \big( v \cdot v^\epsilon + B \cdot B^\epsilon \big) - \nabla \cdot (p^\epsilon v + p v^\epsilon) - D_{9,\epsilon} (u,v) - \frac{1}{2} \nabla \cdot ( (\lvert v \rvert^2)^\epsilon u -  (\lvert v \rvert^2 u)^\epsilon ) \\
&- \nabla \cdot ( (v \cdot v^\epsilon) u) -\nabla \cdot \big( \lvert B \rvert^2 v^\epsilon + (\lvert B \rvert^2)^\epsilon v \big) - D_{10,\epsilon} (u,B) - \frac{1}{2} \nabla \cdot ( (\lvert B \rvert^2)^\epsilon u -  (\lvert B \rvert^2 u)^\epsilon ) \\
&- \nabla \cdot ( (B \cdot B^\epsilon) u) + \nabla \cdot ( (v^\epsilon \cdot B) B) + \nabla \cdot ( (B^\epsilon \cdot v) B) +  D_{11,\epsilon} (v,B) +  \nabla \cdot ( (B \cdot v) B)^\epsilon \\
&-  \nabla \cdot (( B \cdot v)^\epsilon B), \chi \bigg\rangle = 0.
\end{align*}
Now we observe that for fixed $\epsilon > 0$, the defect terms $D_{9,\epsilon}, D_{10,\epsilon}$ and $D_{11,\epsilon}$ are all individually well-defined as elements in $L^1 ((0,T); L^1 (\mathbb{T}^3))$. Moreover, we can write the following equation for the sum of the defect terms
\begingroup
\allowdisplaybreaks
\begin{align*}
&-D_{9,\epsilon} (u,v) - D_{10,\epsilon} (u,B) + D_{11,\epsilon} (v,B) = \partial_t \big( v \cdot v^\epsilon + B \cdot B^\epsilon \big) + \nabla \cdot (p^\epsilon v + p v^\epsilon) \\
&+ \frac{1}{2} \nabla \cdot ( (\lvert v \rvert^2)^\epsilon u -  (\lvert v \rvert^2 u)^\epsilon ) + \nabla \cdot ( (v \cdot v^\epsilon) u) +\nabla \cdot \big( \lvert B \rvert^2 v^\epsilon + (\lvert B \rvert^2)^\epsilon v \big) + \frac{1}{2} \nabla \cdot ( (\lvert B \rvert^2)^\epsilon u \\
&-  (\lvert B \rvert^2 u)^\epsilon ) + \nabla \cdot ( (B \cdot B^\epsilon) u) - \nabla \cdot ( (v^\epsilon \cdot B) B) - \nabla \cdot ( (B^\epsilon \cdot v) B) -  \nabla \cdot ( (B \cdot v) B)^\epsilon \\
&+  B \cdot \nabla ( B \cdot v)^\epsilon,
\end{align*}
\endgroup
which holds in the sense of distributions. Now we observe that the right-hand converges in the sense of distributions to
\begin{align*}
\partial_t (\lvert v \rvert^2 + \lvert B \rvert^2) + 2 \nabla \cdot (pv) + \nabla \cdot (\lvert v \rvert^2 u) + 2 \nabla \cdot (\lvert B \rvert^2 v) + \nabla \cdot (\lvert B \rvert^2 u) -2 \nabla \cdot ( (v\cdot B) B).
\end{align*}
The limit exists as a distribution and is independent of the choice of mollifier, therefore $\lim_{\epsilon \rightarrow 0} \bigg[D_{9,\epsilon} (u,v) + D_{10,\epsilon} (u,B) - D_{11,\epsilon} (v,B) \bigg]$ exists as a distribution and the equation of local energy balance holds in the sense of distributions.
\end{proof}
Now we state sufficient conditions for the defect term to be zero.
\begin{proposition} \label{zerodefectMHD}
Assume that $(v,B) \in L^3 ((0,T); L^3 (\mathbb{T}^3))$ is a weak solution of the inviscid and irresistive Leray-$\alpha$ MHD model. Morever, we assume that
\begin{align*}
&\int_{\mathbb{T}^3} \lvert \delta u (\xi; x,t) \rvert \lvert \delta v (\xi; x,t) \rvert^2 dx \leq C(t) \lvert \xi \rvert \sigma_9 (\lvert \xi \rvert), \\
&\int_{\mathbb{T}^3} \lvert \delta u (\xi; x,t) \rvert \lvert \delta B (\xi; x,t) \rvert^2 dx \leq C(t) \lvert \xi \rvert \sigma_{10} (\lvert \xi \rvert), \\
&\int_{\mathbb{T}^3} \lvert \delta v (\xi; x,t) \rvert \lvert \delta B (\xi; x,t) \rvert^2 dx \leq C(t) \lvert \xi \rvert \sigma_{11} (\lvert \xi \rvert),
\end{align*}
where $C \in L^1 (0,T)$ and $\sigma_j \in L^\infty_{\mathrm{loc}} (\mathbb{R})$ such that $\sigma_j (\lvert \xi \rvert) \rightarrow 0$ as $\lvert \xi \rvert \rightarrow 0$, for $j=9,10,11$. Then $\lim_{\epsilon \rightarrow 0} D_{k,\epsilon} (u,v,B) = D_k (u,v,B) = 0$ for $k=9,10,11$. This then implies that the weak solution conserves energy.
\end{proposition}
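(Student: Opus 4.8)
The plan is to imitate the proof of Proposition \ref{zerodefect} almost verbatim, the one genuinely new point being that the preceding energy-balance theorem established convergence only of the \emph{sum} $D_{9,\epsilon}+D_{10,\epsilon}-D_{11,\epsilon}$, so here one must control each of the three mollified defect terms separately. This is possible because, under the standing assumption $(v,B)\in L^3((0,T);L^3(\mathbb{T}^3))$ — together with $u=(I-\alpha^2\Delta)^{-1}v\in L^3((0,T);W^{2,3}(\mathbb{T}^3))\subset L^3((0,T);L^\infty(\mathbb{T}^3))$, since $W^{2,3}(\mathbb{T}^3)\subset L^\infty(\mathbb{T}^3)$ in three dimensions — each of the products $\lvert\delta u\rvert\lvert\delta v\rvert^2$, $\lvert\delta u\rvert\lvert\delta B\rvert^2$ and $\lvert\delta v\rvert\lvert\delta B\rvert^2$ belongs to $L^1((0,T)\times\mathbb{T}^3)$, locally uniformly in $\xi$, by H\"older's inequality in both $x$ and $t$; hence Tonelli's theorem legitimises interchanging the $x$-integration with the $\xi$-integration in the definitions of $D_{9,\epsilon}$, $D_{10,\epsilon}$, $D_{11,\epsilon}$.

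First I would bound $\int_0^T\int_{\mathbb{T}^3}\lvert D_{9,\epsilon}(u,v)\rvert\,dx\,dt$. Moving the absolute value inside and using Tonelli, this is at most $\frac{1}{2}\int_0^T\int_{\mathbb{R}^3}\lvert\nabla_\xi\phi_\epsilon(\xi)\rvert\big(\int_{\mathbb{T}^3}\lvert\delta u\rvert\lvert\delta v\rvert^2\,dx\big)\,d\xi\,dt$, which by the first hypothesis of the proposition is bounded by $\frac{1}{2}\big(\int_0^T C(t)\,dt\big)\int_{\mathbb{R}^3}\lvert\nabla_\xi\phi_\epsilon(\xi)\rvert\,\lvert\xi\rvert\,\sigma_9(\lvert\xi\rvert)\,d\xi$. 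After the substitution $\xi=\epsilon z$ the last integral becomes $\int_{\mathbb{R}^3}\lvert\nabla\phi(z)\rvert\,\lvert z\rvert\,\sigma_9(\epsilon\lvert z\rvert)\,dz$, and since $\nabla\phi$ has compact support, $\sigma_9\in L^\infty_{\mathrm{loc}}(\mathbb{R})$ and $\sigma_9(\epsilon\lvert z\rvert)\to 0$ pointwise as $\epsilon\to 0$, the Lebesgue dominated convergence theorem shows this tends to $0$. Hence $D_{9,\epsilon}\to 0$ in $L^1((0,T)\times\mathbb{T}^3)$, and a fortiori $D_9(u,v)=0$ as a distribution. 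The identical argument, using the second and third hypotheses, gives $D_{10}(u,B)=0$ and $D_{11}(v,B)=0$.

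Finally, to deduce conservation of energy I would substitute $D_9=D_{10}=D_{11}=0$ into the equation of local energy balance from the previous theorem, obtaining the distributional identity $\partial_t(\lvert v\rvert^2+\lvert B\rvert^2)+2\nabla\cdot(pv)+\nabla\cdot(\lvert v\rvert^2u)+2\nabla\cdot(\lvert B\rvert^2v)+\nabla\cdot(\lvert B\rvert^2u)-2\nabla\cdot((v\cdot B)B)=0$. Testing against a function $\psi=\psi_1(t)$ depending on $t$ only — built exactly as in the proof of Theorem \ref{conservationtheorem} from the difference of two time mollifiers localised near $t_1$ and $t_2$, so that $\psi_1\in\mathcal{D}((0,T)\times\mathbb{T}^3)$ and $\nabla\psi_1\equiv 0$ — annihilates every spatial-divergence term, leaving $\int_0^T\int_{\mathbb{T}^3}(\lvert v\rvert^2+\lvert B\rvert^2)\,\partial_t\psi_1\,dx\,dt=0$; the Lebesgue differentiation theorem then yields $\lVert v(\cdot,t_1)\rVert_{L^2}^2+\lVert B(\cdot,t_1)\rVert_{L^2}^2=\lVert v(\cdot,t_2)\rVert_{L^2}^2+\lVert B(\cdot,t_2)\rVert_{L^2}^2$ for almost every $t_1,t_2\in(0,T)$.

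The computation is essentially routine; the only point requiring genuine care — the ``main obstacle'', such as it is — is the bookkeeping around the three defect terms: because the energy-balance theorem controls only their sum, one must verify that each integrand is \emph{separately} integrable (this is precisely where $v,B\in L^3_tL^3_x$ and $u\in L^\infty_x$ are used) so that the $L^1$-estimate above can be run on each $D_{k,\epsilon}$ in isolation. Once that is in place, the dominated-convergence step and the Lebesgue-differentiation step are word-for-word the ones already carried out for the Leray-$\alpha$ model.
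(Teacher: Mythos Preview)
Your proposal is correct and follows essentially the same approach as the paper, which simply states that the proof is ``completely analogous to the proof of Proposition \ref{zerodefect}'' and carries out the same change of variables $\xi=\epsilon z$ together with dominated convergence. Your write-up is in fact more careful than the paper's: you explicitly flag that the preceding energy-balance theorem only controls the \emph{sum} $D_{9,\epsilon}+D_{10,\epsilon}-D_{11,\epsilon}$ and then justify, via the $L^3_tL^3_x$ assumption on $(v,B)$ and the resulting $L^\infty_x$-regularity of $u$, why each $D_{k,\epsilon}$ is separately in $L^1$ so that the estimate can be run term by term --- a point the paper leaves implicit.
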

\begin{proof}
The proof is completely analogous to the proof of Proposition \ref{zerodefect}.
\end{proof}
We now prove a proposition which states that under given regularity assumptions these conditions are met.
\begin{proposition}
Let $(v,B)$ be a weak solution of the inviscid and irresistive Leray-$\alpha$ MHD model such that $v \in L^3 ((0,T); B^s_{3,\infty} (\mathbb{T}^3))$ and $B \in L^3 ((0,T); B^r_{3,\infty} (\mathbb{T}^3))$ with $s, r > 0$ and $s + 2 r > 1$. Then $\lim_{\epsilon \rightarrow 0} D_{k,\epsilon} (u,v,B) = D_k (u,v,B) = 0$ for $k=9,10,11$ (which implies conservation of energy).
\end{proposition}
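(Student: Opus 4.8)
The plan is to verify the three hypotheses of Proposition~\ref{zerodefectMHD} and then invoke it directly, after which conservation of energy follows by the argument already used for Theorem~\ref{conservationtheorem}. Two preliminary observations set up the estimates. First, since $s,r>0$ the Besov embedding $B^\sigma_{3,\infty}(\mathbb{T}^3)\subset L^3(\mathbb{T}^3)$ for $\sigma>0$ gives $v,B\in L^3((0,T);L^3(\mathbb{T}^3))$, so the equation of local energy balance established above is applicable and the defect terms $D_{9,\epsilon},D_{10,\epsilon},D_{11,\epsilon}$ are well defined in $L^1((0,T);L^1(\mathbb{T}^3))$ for each fixed $\epsilon>0$. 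Second, the Helmholtz inverse $u=(I-\alpha^2\Delta)^{-1}v$ is smoothing of order two — on $\mathbb{T}^3$ it is the Fourier multiplier $(1+\alpha^2|k|^2)^{-1}$ — so $v\in L^3((0,T);B^s_{3,\infty})$ implies $u\in L^3((0,T);B^{s+2}_{3,\infty})$.

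Next I would bound the three integrals in Proposition~\ref{zerodefectMHD} by Hölder's inequality in $x$ with exponents $(3,3,3)$ followed by the translation inequality \eqref{besovinequality}. Pointwise in $t$ this gives
\begin{align*}
\int_{\mathbb{T}^3}\lvert\delta u\rvert\lvert\delta v\rvert^2\,dx &\le \lVert\delta u\rVert_{L^3}\lVert\delta v\rVert_{L^3}^2 \le C\lvert\xi\rvert^{3s+2}\lVert u\rVert_{B^{s+2}_{3,\infty}}\lVert v\rVert_{B^s_{3,\infty}}^2,\\
\int_{\mathbb{T}^3}\lvert\delta u\rvert\lvert\delta B\rvert^2\,dx &\le C\lvert\xi\rvert^{s+2r+2}\lVert u\rVert_{B^{s+2}_{3,\infty}}\lVert B\rVert_{B^r_{3,\infty}}^2,\\
\int_{\mathbb{T}^3}\lvert\delta v\rvert\lvert\delta B\rvert^2\,dx &\le C\lvert\xi\rvert^{s+2r}\lVert v\rVert_{B^s_{3,\infty}}\lVert B\rVert_{B^r_{3,\infty}}^2.
\end{align*}
Factoring out one power of $\lvert\xi\rvert$, I set $\sigma_9(\lvert\xi\rvert)=\lvert\xi\rvert^{3s+1}$, $\sigma_{10}(\lvert\xi\rvert)=\lvert\xi\rvert^{s+2r+1}$ and $\sigma_{11}(\lvert\xi\rvert)=\lvert\xi\rvert^{s+2r-1}$; since $s,r>0$ and $s+2r>1$ all three exponents are strictly positive, so each $\sigma_j$ lies in $L^\infty_{\mathrm{loc}}(\mathbb{R})$ and tends to $0$ as $\lvert\xi\rvert\to0$. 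The accompanying coefficients $C(t)=\lVert u(\cdot,t)\rVert_{B^{s+2}_{3,\infty}}\lVert v(\cdot,t)\rVert_{B^s_{3,\infty}}^2$, and analogously for the other two, lie in $L^1(0,T)$: by Hölder in time with exponents $(3,\tfrac32)$ one uses that $\lVert u\rVert_{B^{s+2}_{3,\infty}}\in L^3_t$ and $\lVert v\rVert_{B^s_{3,\infty}}^2,\lVert B\rVert_{B^r_{3,\infty}}^2\in L^{3/2}_t$. Thus all the hypotheses of Proposition~\ref{zerodefectMHD} hold, so $D_k(u,v,B)=0$ for $k=9,10,11$, and conservation of $\lVert v(\cdot,t)\rVert_{L^2}^2+\lVert B(\cdot,t)\rVert_{L^2}^2$ then follows from the equation of local energy balance by choosing the usual time cut-off test function and applying the Lebesgue differentiation theorem exactly as in the proof of Theorem~\ref{conservationtheorem}.

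There is no serious obstacle here; the only point requiring care is the bookkeeping of exponents, and it is instructive. The defect terms $D_9$ and $D_{10}$ vanish for \emph{every} pair $s,r>0$ precisely because the factor $\delta u$ carries two extra derivatives coming from the Helmholtz inversion, so they impose no constraint. The genuine restriction comes entirely from $D_{11}$, whose integrand $\lvert\delta v\rvert\lvert\delta B\rvert^2$ involves no $u$ at all and therefore forces the coupling inequality $s+2r>1$; the asymmetry between $s$ and $r$ in this condition is exactly the mechanism allowing one to "trade" regularity between $v$ and $B$ referred to in the introduction.
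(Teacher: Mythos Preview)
Your approach is exactly the paper's: verify the three increment bounds of Proposition~\ref{zerodefectMHD} via H\"older and the translation estimate \eqref{besovinequality}, then invoke that proposition. One small correction: the first--order increment of $u$ cannot decay faster than linearly, so \eqref{besovinequality} should be applied with exponent at most $1$ for $\delta u$ (using $B^{s+2}_{3,\infty}\hookrightarrow B^{1}_{3,\infty}$ or simply $\lVert\delta u\rVert_{L^3}\le\lvert\xi\rvert\lVert\nabla u\rVert_{L^3}$), giving $\lvert\xi\rvert^{1+2s}$ and $\lvert\xi\rvert^{1+2r}$ rather than $\lvert\xi\rvert^{3s+2}$ and $\lvert\xi\rvert^{s+2r+2}$; since $s,r>0$ this is harmless and your identification of $D_{11}$ as the source of the constraint $s+2r>1$ is exactly right.
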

\begin{proof}
The proof for $D_{9}$ and $D_{10}$ is analogous to the case for the Leray-$\alpha$ model, because $v(\cdot, t) \in B^s_{3,\infty} (\mathbb{T}^3)$ implies that $u(\cdot, t) \in B^{2 + s}_{3,\infty} (\mathbb{T}^3)$. For $D_{11} (u,v)$ we observe that
\begin{equation*}
\int_{\mathbb{T}^3} \lvert \delta v (\xi; x,t) \rvert \lvert \delta B (\xi; x,t) \rvert^2 dx \leq \lvert \xi \rvert^{s + 2 r} \lVert v \rVert_{B^s_{3,\infty}} \lVert B \rVert_{B^r_{3,\infty}}^2.
\end{equation*}
To obtain this bound we have used inequality \eqref{besovinequality}.
Then we can take $\sigma_{11} (\lvert \xi \rvert) \coloneqq \lvert \xi \rvert^{s + 2 r - 1}$, which satisfies the conditions of Proposition \ref{zerodefectMHD}. Therefore it follows from the proposition that $D_{11} (v,B) = 0$.
\end{proof}
Finally, we can prove that the weak solution conserves energy.
\begin{theorem}
Let $(v,B)$ be a weak solution of the inviscid and irresistive Leray-$\alpha$ MHD model, then if $v \in L^3 ((0,T); B^s_{3,\infty} (\mathbb{T}^3))$ and $B \in L^3 ((0,T); B^r_{3,\infty} (\mathbb{T}^3))$ with $s, r > 0$ and $s + 2 r > 1$ the weak solution conserves energy. So for almost all $t_1 ,t_2 \in (0,T)$ it holds that
\begin{equation*}
\lVert v(t_1, \cdot) \rVert_{L^2}^2 + \lVert B(t_1, \cdot) \rVert_{L^2}^2 = \lVert v(t_2, \cdot) \rVert_{L^2}^2 + \lVert B(t_2, \cdot) \rVert_{L^2}^2.
\end{equation*}
\end{theorem}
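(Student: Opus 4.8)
The plan is to follow verbatim the scheme of Theorem~\ref{conservationtheorem}: first reduce the equation of local energy balance to a pure conservation law by killing the defect terms, then test against a function of time only so that every flux term drops out, and finally pass to the limit via the Lebesgue differentiation theorem.

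First I would observe that the hypotheses $v\in L^3((0,T);B^s_{3,\infty}(\mathbb{T}^3))$ and $B\in L^3((0,T);B^r_{3,\infty}(\mathbb{T}^3))$ with $s,r>0$ imply in particular $v,B\in L^3((0,T);L^3(\mathbb{T}^3))$ (by the Besov embedding $B^\sigma_{3,\infty}\subset L^3$ for $\sigma>0$), so the equation of local energy balance established above applies. By the preceding proposition, under exactly these assumptions (recalling $s+2r>1$) one has $D_9(u,v)=D_{10}(u,B)=D_{11}(v,B)=0$ as distributions. Hence the equation of local energy balance collapses to
\begin{equation*}
\partial_t(\lvert v\rvert^2+\lvert B\rvert^2)+2\nabla\cdot(pv)+\nabla\cdot(\lvert v\rvert^2 u)+2\nabla\cdot(\lvert B\rvert^2 v)+\nabla\cdot(\lvert B\rvert^2 u)-2\nabla\cdot((v\cdot B)B)=0
\end{equation*}
in $\mathcal{D}'(\mathbb{T}^3\times(0,T))$. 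The point to check here is that each of the five flux vector fields lies in $L^1(\mathbb{T}^3\times(0,T))$: this follows from $v,B\in L^3_{t,x}$, from $u=(I-\alpha^2\Delta)^{-1}v\in L^\infty((0,T);H^2(\mathbb{T}^3))\subset L^\infty((0,T);L^\infty(\mathbb{T}^3))$, and from the pressure bound $p\in L^{3/2}((0,T);L^{3/2}(\mathbb{T}^3))$ noted in the proof of the energy-balance theorem.

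Next, as in Theorem~\ref{conservationtheorem}, fix $t_1<t_2$ in $(0,T)$, let $\varphi$ be a standard one-dimensional mollifier supported in $[-1,1]$, and take the test function
\begin{equation*}
\psi_1(t)=\int_0^t\varphi_\epsilon(t'-t_1)-\varphi_\epsilon(t'-t_2)\,dt',
\end{equation*}
which for $\epsilon$ small has compact support in $(0,T)$ and depends on $t$ only. Since $\psi_1$ is constant in space, integrating the reduced equation against $\psi_1$ annihilates all the divergence terms by periodicity, leaving
\begin{equation*}
\int_{t_1-\epsilon}^{t_1+\epsilon}\int_{\mathbb{T}^3}(\lvert v\rvert^2+\lvert B\rvert^2)\varphi_\epsilon(t-t_1)\,dx\,dt=\int_{t_2-\epsilon}^{t_2+\epsilon}\int_{\mathbb{T}^3}(\lvert v\rvert^2+\lvert B\rvert^2)\varphi_\epsilon(t-t_2)\,dx\,dt.
\end{equation*}
Letting $\epsilon\to0$ and invoking the Lebesgue differentiation theorem (using $t\mapsto\int_{\mathbb{T}^3}(\lvert v\rvert^2+\lvert B\rvert^2)\,dx\in L^\infty(0,T)\subset L^1_{\mathrm{loc}}(0,T)$) yields
\begin{equation*}
\int_{\mathbb{T}^3}(\lvert v(x,t_1)\rvert^2+\lvert B(x,t_1)\rvert^2)\,dx=\int_{\mathbb{T}^3}(\lvert v(x,t_2)\rvert^2+\lvert B(x,t_2)\rvert^2)\,dx
\end{equation*}
for almost every $t_1,t_2\in(0,T)$, which is the claimed conservation of energy. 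I do not anticipate a genuine obstacle: the only place requiring a little care is the verification that all five flux terms are $L^1_{t,x}$-divergences so that testing against a spatially constant function genuinely removes them, and the bookkeeping that the conserved quantity here is $\lVert v\rVert_{L^2}^2+\lVert B\rVert_{L^2}^2$ rather than an $H^1$-type norm, which changes nothing in the argument.
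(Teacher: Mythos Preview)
Your proposal is correct and follows essentially the same approach as the paper, which simply refers back to Theorem~\ref{conservationtheorem} for the argument. You have spelled out the details more carefully than the paper does here---in particular the $L^1_{t,x}$ integrability of the flux terms and the pressure regularity---but the skeleton (kill the defect terms via the preceding proposition, test against a spatially constant time cutoff, apply Lebesgue differentiation) is exactly the paper's intended route.
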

\begin{proof}
The proof works the same way as the proof of Theorem \ref{conservationtheorem}.
\end{proof}
\end{appendices}
	
\bibliographystyle{abbrv}

\bibliography{subgrid_scale_models_final_version}

\end{document}